\makeatletter \@addtoreset{equation}{section} \makeatother
\renewcommand\thetable{\thesection.\@arabic\c@table}
\theoremstyle{plain}
\newtheorem{maintheorem}{Theorem}
\newtheorem{maincorollary}{Corollary}
\newtheorem{theorem}{Theorem }[section]
\newtheorem{proposition}[theorem]{Proposition}
\newtheorem{lemma}[theorem]{Lemma}
\newtheorem{corollary}[theorem]{Corollary}
\theoremstyle{definition} \theoremstyle{remark}
\newtheorem{remark}[theorem]{Remark}
\newtheorem{example}[theorem]{Example}
\newcommand{\field}[1]{\mathbb{#1}}
\newcommand{\real}{\field{R}}
\renewcommand{\natural}{\field{N}}
\newcommand{\torus}{\field{T}}
\newcommand{\al} {\alpha}       
\newcommand{\ga} {\gamma}    
\newcommand{\de} {\delta}       \newcommand{\De}{\Delta}
\newcommand{\vep}{\varepsilon}
\newcommand{\la} {\lambda}      \newcommand{\La}{\Lambda}
\newcommand{\si} {\sigma}
\newcommand{\N}{\mathbb{N}}
\newcommand{\R}{\mathbb{R}}
\newcommand{\supp}{\operatorname{supp}}
\newcommand{\diam}{\operatorname{diam}}
\newcommand{\topp}{\operatorname{top}}
\renewcommand{\field}[1]{\mathbb{#1}}
\newcommand{\re}{\field{R}}
\renewcommand{\natural}{\field{N}}
\newcommand{\vr}{\varphi}
\newcommand{\Ptop}{P_{\topp}}
\newcommand{\un}{\underbar}
\newcommand{\cP}{\mathcal{P}}
\newcommand{\cL}{\mathcal{L}}
\newcommand{\cM}{\mathcal{M}}
\newcommand{\cB}{\mathcal{B}}
\newcommand{\cF}{\mathcal{F}}
\newcommand{\cW}{\mathcal{W}}
\newcommand{\cU}{\mathcal{U}}
\newcommand{\cA}{\mathcal{A}}
\begin{document}

\title{Equilibrium states for non-uniformly expanding maps: decay of correlations and strong stability}
\author{ A. Castro and P. Varandas}

\address{Armando Castro, Departamento de Matem\'atica, Universidade Federal da Bahia\\
Av. Ademar de Barros s/n, 40170-110 Salvador, Brazil.}
\email{armando@impa.br}

\address{Paulo Varandas, Departamento de Matem\'atica, Universidade Federal da Bahia\\
Av. Ademar de Barros s/n, 40170-110 Salvador, Brazil.}
\email{paulo.varandas@ufba.br}

\date{\today}

\maketitle

\begin{abstract}
We study the rate of decay of correlations for equilibrium states associated
to a robust class of non-uniformly expanding maps where no Markov assumption is required.
We show that the Ruelle-Perron-Frobenius operator acting on the space of H\"older continuous 
observables has a spectral gap and deduce the exponential decay of correlations and the central limit theorem.
In particular, we obtain an alternative proof for the existence and uniqueness of the equilibrium states
and we prove that the topological pressure varies continuously.
Finally, we use the spectral properties of the transfer operators in space of differentiable observables
to obtain strong stability results under deterministic and random perturbations.  
\end{abstract}

% Keywords:  transfer operator, non-uniform hyperbolicity, decay of correlations, equilibrium states, 
% continuity topological pressure, statistical stability, stochastic stability,

%%%%%%%%%%%%%%%%%%%%%%%%%%%%%%%%
\section{Introduction}

The thermodynamical formalism was brought from statistical mechanics to dynamical systems 
by the pioneering works of Sinai, Ruelle and Bowen \cite{Si72,Bo75,BR75} in the mid seventies. 
Indeed, the correspondance between one-dimensional lattices and uniformly hyperbolic maps, via 
Markov partitions, allowed to translate and introduce several notions of Gibbs measures and 
equilibrium states in the realm of dynamical systems.
Nevertheless, although uniformly hyperbolic dynamics arise in physical systems
(see e.g. \cite{HM03}) they do not include some relevant classes of
systems including the Manneville-Pomeau transformation (phenomena of
intermittency), H\'enon maps and billiards with convex scatterers.
We note that all the previous systems present some non-uniformly
hyperbolic behavior and its relevant measure satisfies some weak
Gibbs property. 
Moreover, an extension of the thermodynamical formalism beyond the scope of uniform 
hyperbolicity reveals fundamental difficulties. Even in the non-uniformly hyperbolic context, 
where there are no zero Lyapunov exponents and there exists a non-uniform geometric theory of invariant
manifolds, the absence of finite generating Markov partitions constitutes an obstruction to use the
same strategy pushed forward before. Nevertheless, more recently there have been established
many evidences that non-uniformly hyperbolic dynamical systems admit
countable and generating Markov partitions. This is now paralel to the development
of a thermodynamical formalism of gases with infinitely many states, a hard subject
not yet completely understood. We refer the reader to~\cite{BS03,Pin08,PV10} for recent progress
in this direction.

So, despite the effort of many authors, a general picture is still far from complete. 
Some of the recent contributions concerning the existence and uniqueness of equilibrium states
in a context of non-uniform hyperbolicity include 
\cite{BrK98, BM02, BS03, Yu03,OV08,SV09,BF09,SV09,VV10,PR10}.
Many of these papers deal with dynamical systems with neutral periodic points, unimodal maps,
perturbations of hyperbolic transformations and shifts with countable many symbols, some of the 
relevant sources of examples of non-hyperbolic systems.
However, a deep study on the statistical properties of the equilibrium states, as the mixing properties,
limit theorems, strong stability under deterministic and random perturbations or regularity of the
topological pressure is usually obtained as a consequence of the spectral properties of the Ruelle-
Perron-Frobenius operator. This functional analytic approach has gained special interest in the last 
few years and produced new and interesting results even in the uniformly hyperbolic setting 
(see e.g. \cite{BKL01,GL06,BT07}). Just for completeness let us mention that, since the (semi)conjugacy 
between uniformly hyperbolic dynamical systems and the symbolic dynamics is only H\"older continuous, 
the strategy developed in the seventies did not  allowed to understand the statistical properties in the space of
smooth observables. Important and recent extensions of this functional analytic approach to 
the setting of non-uniform hyperbolicity include e.g. the works \cite{LSV98, Cas02,Cas04,DL08,BG09,BG10,Ru10}.

In this article we study the strong statistical properties of some equilibrium states 
built in \cite{VV10} for a large class of non-uniformly expanding local homeomorphisms that may not admit 
a Markov partition. Using a characterization of equilibrium states as weak Gibbs measures absolutely continuous
with respect to conformal reference measures, the authors proved roughly that every local homeomorphism with coexistence of expanding and contraction exhibit a form of average expansion. This enables to
use Birkhoff's method of projective cones aplied to the Ruelle-Perron-Frobenius operator acting on suitable
Banach spaces to obtain the existence of a unique equilibrium state for any H\"older continuous potential with low variation and that it satisfies strong statistical properties.
Natural examples are obtained by bifurcation of expanding homeomorphisms and subshifts of
finite type and allows intermitency phenomena.
Even in the absense of Markov partition we establish that  the Ruelle-Perron-Frobenius transfer operator 
has a spectral gap in the Banach spaces of both H\"older continuous and smooth observables. This was inspired 
and extends the work of Matheus and Arbieto~\cite{AM} that considered local diffeomorphisms under some slightly
different assumptions but where the existence of a finite Markov partition played an important role.
In consequence, we get an alternative proof for the existence and uniqueness of equilibrium states in \cite{VV10}, obtain
exponential decay of correlations and prove a central limit theorem. Moreover, we prove that in this non-uniformly 
expanding setting the topological pressure varies continuously with respect to the dynamics and the potential.

At this point one could think the stability of the equilibrium states under deterministic and random perturbations
could follow directly from the spectral gap property. We refer the reader to \cite{HH01} for perturbation theory
of smooth families of quasi-compact operators. However this is not the case since the transfer operators
acting on the space of H\"older continuous potentials may not vary continuously on the 
dynamical system as illustrated in Example~\ref{ex.transfer}. Nevertheless we prove that the densities of the equilibrium states with respect to the conformal measures are H\"older continuous and vary uniformly with the
dynamics.
Strong statistical and stochastic stability results hold in the space of differentiable observables and are
proved after carefull analysis of the action of the transfer operators in those functional spaces.
We obtain a spectral stability under random perturbations. Namely,  the 
spectral components of the Ruelle-Perron-Frobenius operator associated to general random perturbations 
of the transformation and the potential varies continuously and converges to the spectral components of 
Ruelle-Perron-Frobenius of the the unperturbed dynamical system outside of a disk containing zero in the
spectrum. 

Finally, let us also mention that the program to understand to statistical and stochastic properties of the equilibrium
states for this class of multidimensional non-uniformly expanding transformations is under way. Some of 
the very interesting remaining questions are to understand  if one can obtain further regularity of the topological
pressure and the density of the equilibrium states with respect to conformal measures along
parametrized families of potentials (e.g. real analytic) and the study of zeta functions. Such program has been
carried out with success for uniformly hyperbolic and some partially hyperbolic and one-dimensional 
non-uniformly expanding dynamical systems. See e.g.~\cite{Rue97,Dol04,BS08,BS09} and references therein. 
Just to mention some recent developments, in a joint work with T. Bomfim \cite{BCV12}, 
we prove the differentiability of thermodynamical quantities as topological pressure, invariant densities, conformal measures and measures of maximal entropy despite the lack of continuity of the Ruelle-Perron-Frobenius operator 
with respect to the dynamics. 

This paper is organized as follows.  In Section~\ref{s.statements}, we recall some definitions and make 
the precise statements of our main results and some preliminary results are given in Section~\ref{s.preliminaries}. 
The proof of the spectral gap for the Ruelle-Perron-Frobenius operator in the space of H\"older continuous
observables, continuity of the topological pressure, uniform continuity of the densities of
equilibrium states with respect to conformal measures and exponential decay of correlations 
are given in Section~\ref{s.gapHolder}.
In Section~\ref{s.gapCr} we show that Ruelle-Perron-Frobenius operator acting on the space of smooth 
observables also admits a spectral gap and obtain the strong stability of the equilibrium states under 
deterministic and random perturbations. Finally, some examples are given in Section~\ref{s.examples}.

%%%%%%%%%%%%%%%%%%%%%%%%%%%%%%%%%%%%%%%%%%
\section{Statement of the main results}\label{s.statements}

%%%%%%%%%%%%%%%%%%%%%%%%%%%%%%%%%%%%%%%%%%%%%%%%%%%%%%%%%%%%%%%%%%%%%%%
\subsection{Setting}

Let $M$ be compact and connected Riemmanian manifold of dimension $m$ with distance $d$. 
Let $f:M \to M$ be a \emph{local homeomorphism} and assume that there exists a continuous function
$x\mapsto L(x)$ such that, for every $x\in M$ there is a
neighborhood $U_x$ of $x$ so that $f_x : U_x \to f(U_x)$ is
invertible and
$$
d(f_x^{-1}(y),f_x^{-1}(z))
    \leq L(x) \;d(y,z), \quad \forall y,z\in f(U_x).
$$
In particular every point has the same finite number of preimages $\deg(f)$ which coincides with the 
degree of $f$.
For all our results we assume that $f$ and $\phi$ satisfy conditions
(H1), (H2), and (P) stated below. Assume there
exist constants $\si>1$ and $L\ge 1$, and an open region $\cA\subset M$
such that \vspace{.1cm}
\begin{itemize}
\item[(H1)] $L(x)\leq L$ for every $x \in \cA$ and
$L(x)< \sigma^{-1}$ for all $x\notin \cA$, and $L$ is
close to $1$: the precise condition is given in \eqref{eq.
relation expansion} and \eqref{eq. relation potential}.
\item[(H2)] There exists a finite covering $\cU$ of $M$ by open domains of injectivity for $f$ 
such that $\cA$ can be covered by $q<\deg(f)$. \vspace{.1cm}
\end{itemize}
The first condition means that we allow expanding and contracting
behavior to coexist in $M$: $f$ is uniformly expanding outside $\cA$
and not too contracting inside $\cA$. In the case that  $\cA$ is empty then
$f$ is uniformly expanding. The second one requires
that every point has at least one preimage in the expanding region. 
An observable $g: M\to \mathbb R$ is $\al$-H\"older continuous if the H\"older constant  
$$
|g|_\al
	=\sup_{x\neq y} \frac{|g(x)-g(y)|}{d(x,y)^\al}
$$
is finite. As usual, we endow the space $C^\al(M, \mathbb R)$ of H\"older continuous observables with
the norm $\|\cdot\|_\al=\|\cdot \|_0+|\cdot|_\al$.
We assume that the potential $\phi:M \to \mathbb R$ is H\"older
continuous and that 
\begin{itemize}
\item[(P)] $\sup\phi-\inf\phi<\vep_\phi$ \quad  and \quad $|e^\phi|_{\alpha}  <\vep_\phi \;e^{\inf \phi}$
\end{itemize}
for some $\vep_\phi>0$ satisfying equation~\eqref{eq.vep}, depending on the constants $L$, $\si$, $q$
and $\deg(f)$. 
The previous is an open condition on the potential, relative to the
H\"older norm, and it is satisfied e.g. by constant functions.
In particular we consider measures of maximal entropy.
The second condition above means that $\exp(\phi)$ is contained in a small cone of H\"older 
continuous as discussed after Theorem~\ref{t.cone.invariance}.

%%%%%%%%%%%%%%%%%%%%%%%%%%%%%%%%%%%%%%%%%%%%%%%%%%%%%%%%%%%%%%%%%%%%%%%
\subsection{Existence and uniqueness of equilibrium states}\label{existence eq.
states}

Let us first recall some necessary definitions. 
Given a continuous map $f:M\to M$ and a potential $\phi:M \to \mathbb R$,
the variational principle for the pressure asserts that
\begin{equation*}
\label{variational principle} \Ptop(f,\phi)=\sup \Big\{
h_\mu(f)+\int \phi \;d\mu : \mu \;\text{is}\; f\text{-invariant}
\Big\}
\end{equation*}
where $\Ptop(f,\phi)$ denotes the topological pressure of $f$ with
respect to $\phi$ and $h_\mu(f)$ denotes the metric entropy. An
\textit{equilibrium state} for $f$ with respect to $\phi$ is an
invariant measure that attains the supremum in the right hand side
above.

The equilibrium states constructed in \cite{VV10} are absolutely continuous
with respect to an expanding, conformal and non-lacunary Gibbs measure $\nu$. Let us
recall these definitions and the notions involved.
A probability measure $\nu$, not necessarily invariant, is
\emph{conformal} if there exists a function $\psi:M\to \R$ such
that $\nu(f(A))=\int_A e^{-\psi} d\nu$ 
for every measurable set $A$ such that $f \mid A$ is injective.
Let $S_n \phi=\sum_{j=0}^{n-1} \phi \circ f^j$ denote the $n$th
Birkhoff sum of a function $\phi$. 
The \emph{basin of attraction} of an $f$-invariant, ergodic probability
measure $\mu$ is the set $B(\mu)$ of points $x \in M$ such that the probability measures
$\frac{1}{n} \sum_{j=0}^{n-1} \de_{f^j(x)}$ converges weakly to $\mu$ when $n\to\infty$.
We build over the following theorem which is a direct consequence of the results in \cite{VV10}.

\begin{theorem}\label{thm.VV}
Let $f:M \to M$ be a 
local homeomorphism with Lipschitz continuous inverse satisfying (H1), (H2) and $\phi:M \to \R$ a 
H\"older continuous potential such that $\sup\phi-\inf\phi<\log\deg(f)-\log q$.
Then, there exists a finite number of ergodic equilibrium states $\mu_1, \mu_2, \dots, \mu_k$
for $f$ with respect to $\phi$, and they are absolutely continuous with respect to some conformal 
expanding measure $\nu$. Moreover, the union of the basins of attraction  $B(\mu_i)$ contain 
$\nu$-almost every point.
 \end{theorem}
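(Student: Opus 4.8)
The plan is to deduce Theorem~\ref{thm.VV} from the construction of equilibrium states in~\cite{VV10} by carefully packaging the results there. First I would recall the strategy of~\cite{VV10}: one constructs a \emph{conformal expanding measure} $\nu$ with Jacobian $e^{\Ptop(f,\phi)-\phi}$ using a Ruelle-type operator argument and a fixed-point theorem for the dual operator, and one proves that $\nu$ is an expanding measure, meaning that $\nu$-almost every $x$ has positive ``non-uniform expansion'' along its orbit; the coexistence hypothesis (H1) together with the gap condition $\sup\phi-\inf\phi<\log\deg(f)-\log q$ from (H2) is exactly what forces this average expansion. I would then invoke the absolute continuity part: any ergodic equilibrium state $\mu$ is absolutely continuous with respect to $\nu$, with a density bounded above and below away from zero on its support, again a result proved in~\cite{VV10} via bounded distortion along hyperbolic times.

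Next I would address the finiteness of the number of ergodic equilibrium states. The key mechanism is that the density $d\mu/d\nu$ is bounded below on the support of $\mu$ by a uniform constant $c>0$ independent of $\mu$; since distinct ergodic equilibrium states are mutually singular and each carries $\nu$-mass at least $c$ on a set where the others vanish, there can be at most $\lfloor 1/c\rfloor$ of them. I would phrase this as: if $\mu_1,\dots,\mu_N$ are distinct ergodic equilibrium states, pick pairwise disjoint sets $A_i$ with $\mu_i(A_i)=1$; then $1=\nu(M)\ge\sum_i\nu(A_i)\ge\sum_i c\,\mu_i(A_i)=Nc$, giving $N\le 1/c$. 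Combining with the fact that each ergodic equilibrium state \emph{is} absolutely continuous with respect to $\nu$ (so such $\mu_i$ genuinely exist, by the nonemptiness of the set of equilibrium states coming from upper semicontinuity of the entropy map in this expansive-like setting, or directly from the construction in~\cite{VV10}) yields a finite nonempty list $\mu_1,\dots,\mu_k$.

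For the basin statement, I would use that $\nu$ is a \emph{non-lacunary} (``non-lacunar'') Gibbs measure and expanding: by a Birkhoff-type argument applied to the natural extension, or by the ergodic decomposition of $\nu$-generic points combined with the SRB-like property of the $\mu_i$, one shows that for $\nu$-almost every $x$ the empirical measures $\frac1n\sum_{j=0}^{n-1}\de_{f^j(x)}$ accumulate on the convex hull of $\{\mu_1,\dots,\mu_k\}$, and in fact converge to one of the $\mu_i$. Concretely, the expanding and conformal structure gives, at $\nu$-almost every point, infinitely many hyperbolic times with controlled distortion, which lets one transport the absolute continuity and push forward the density; a standard argument (as in the SRB theory for non-uniformly expanding maps) then shows $\bigcup_i B(\mu_i)$ has full $\nu$-measure.

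The main obstacle I expect is not any single estimate but assembling the right statements from~\cite{VV10}: that reference is phrased in terms of weak Gibbs measures and reference conformal measures rather than directly in terms of a finite list of ergodic equilibrium states, so the work is to extract (i) the uniform lower bound on densities, (ii) the mutual singularity/counting argument, and (iii) the basin-covering property, checking that the hypotheses (H1), (H2) and the spectral gap on $\sup\phi-\inf\phi$ used here match those imposed in~\cite{VV10}. Once those ingredients are in place, the theorem follows from the elementary measure-theoretic bookkeeping sketched above, so I would keep the proof short and pointer-heavy, emphasizing that this is a repackaging of~\cite{VV10} rather than a new argument.
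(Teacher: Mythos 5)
The paper does not actually prove Theorem~\ref{thm.VV}: immediately before the statement the authors write that it ``is a direct consequence of the results in \cite{VV10},'' so their ``proof'' is a citation. Your sketch is a reasonable unpacking of what that citation contains --- construction of the conformal expanding measure $\nu$, absolute continuity of ergodic equilibrium states with bounded density, a counting argument for finiteness, and a hyperbolic-times argument for the basin covering --- and so it is compatible with the paper's approach at this level of detail.

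One direction is reversed in your finiteness count, though it does not damage the argument. You say the density $d\mu_i/d\nu$ is \emph{bounded below} by a uniform $c>0$ and then deduce $\nu(A_i)\ge c\,\mu_i(A_i)$. But $\nu(A_i)\ge c\,\mu_i(A_i)$ is equivalent to $d\mu_i/d\nu\le 1/c$ on $A_i$, i.e.\ it uses the \emph{upper} bound on the density; the lower bound $d\mu_i/d\nu\ge c$ only gives $\nu(A_i)\le 1/c$, which is useless here. Since you correctly recorded earlier that the density is bounded both above and below (the upper bound comes from bounded distortion at hyperbolic times and is uniform in $i$), the mechanism you describe is sound --- just swap ``bounded below by $c$'' for ``bounded above by $C$'' and take $c=1/C$ in the displayed inequality.
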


Observe that despite the characterization that equilibrium states are absolutely continuous
invariant measures no information was known e.g. on the continuity of the topological pressure and density  functions.  Here we shall adress these questions, the uniqueness of the equilibrium states and also the strong
 stability of the equilibrium states. Since our assumption (P) implies that the potential $\phi$ has small variation 
 then it fits in the assumption of the previous theorem. We will build over the aforementioned result with a 
 completely  different functional analytic approach.

%%%%%%%%%%%%%%%%%%%%%%%%%%%%%%%%%%%%%%%%%%
\subsection{Statement of the main results}

In this section we recall some necessary definitions and state our main results. 
The Ruelle-Perron-Fr\"obenius transfer operator $\cL_\phi$ associated to 
$f:M\to M$ and $\phi:M\to\real$ is the linear operator defined on a Banach space $X \subset C^0(M,\mathbb R)$ 
of continuous functions $\vr:M\to\real$ by
$$
\cL_\phi \vr(x) = \sum_{f(y)=x} e^{\phi(y)}\vr(y).
$$
Since $f$ is a local homeomorphism it is clear that $\cL_\phi \vr$ is continuous for every continuous $\vr$
and, furthermore, $\cL_\phi$  is indeed a bounded operator relative to the norm of uniform convergence in 
$C^0(M,\mathbb R)$ because
$
\|\cL_\phi\| \le  \deg(f) \; e^{\sup|\phi|}.
$
Analogously, $\cL_\phi$ preserves the Banach space $C^{\alpha}(M,\mathbb R)$, $0<\alpha<1$ of H\"older
 continuous observables. Moreover, it is not hard to check that $\cL_\phi$ is a bounded linear operator in the Banach 
space $C^{r}(M,\mathbb R)\subset C^0(M,\mathbb R)$ ($r\ge 1$) endowed with the norm $\|\cdot\|_r$ 
whenever $f$ is a $C^r$-local diffeomorphism and $\phi\in C^{r}(M,\mathbb R)$.
We say that the Ruelle-Perron-Frobenius operator $\cL_\phi$ acting on a Banach space $X$ has the
\emph{spectral gap property} if there exists a decomposition of its spectrum 
$\sigma(\cL_\phi)\subset \mathbb C$ as follows: $\sigma(\cL_\phi)=\{\lambda_1\}\cup \Sigma_1$ where
$\lambda_1$ is a leading eigenvalue for $\cL_\phi$ with one-dimensional associated eigenspace
and $\Sigma_1 \subsetneq \{ z\in \mathbb C : |z|<\lambda_1 \}$.

The first result is a spectral gap for the Ruelle-Perron-Frobenius 
operator in the space of H\"older continuous observables, which is enough to derive the uniqueness and
further regularity of the density of the equilibrium state with respect to the conformal measure.

\begin{maintheorem}\label{thm.spectralgap}
Let $f:M \to M$ be a 
local homeomorphism with Lipschitz continuous inverse and $\phi:M \to \R$ be a H\"older continuous potential
satisfying (H1), (H2) and (P).  Then the Ruelle-Perron-Frobenius has a spectral gap property in the space of H\"older continuous observables, there exists a unique equilibrium state $\mu$ for $f$ with respect to $\phi$ and the density $d\mu/d\nu$ is H\"older continuous. 
\end{maintheorem}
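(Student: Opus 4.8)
The plan is to realize $\cL_\phi$ as a projective contraction on a suitable cone of H\"older continuous observables and apply Birkhoff's theory, exactly as announced in the paragraph preceding Theorem~\ref{t.cone.invariance}. First I would fix, for constants $\la,\kappa>0$ to be chosen depending on $L$, $\si$, $q$ and $\deg(f)$, the cone
$$
\Lambda_{\kappa,\la}=\Bigl\{\vr\in C^\al(M,\R): \vr>0,\ \vr(x)\le e^{\kappa d(x,y)^\al}\vr(y)\ \text{whenever}\ d(x,y)<\text{const}\Bigr\},
$$
and check that under hypotheses (H1), (H2) and (P) the operator $\cL_\phi$ maps $\Lambda_{\kappa,\la}$ \emph{strictly} into itself, with image of finite projective diameter. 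This is where the smallness conditions \eqref{eq.vep}, \eqref{eq. relation expansion}, \eqref{eq. relation potential} are used: splitting the preimages of two nearby points $x,x'$ according to whether they lie in the expanding part or in $\cA$, one estimates $\cL_\phi\vr(x)/\cL_\phi\vr(x')$ by distributing the distortion of $e^\phi$ (controlled by $|e^\phi|_\al<\vep_\phi e^{\inf\phi}$) and the contraction/expansion of the inverse branches; the point is that the $q<\deg(f)$ branches through $\cA$ contribute a factor that is beaten by the genuinely expanding branches provided $L$ is close enough to $1$. The bound $\sup\phi-\inf\phi<\vep_\phi$ together with (H2) gives a lower bound $\cL_\phi\um\ge (\deg(f)-q)\,e^{\inf\phi}$ on the expanding branches and an upper bound $\cL_\phi\um\le \deg(f)\,e^{\sup\phi}$, which is what forces the cone condition to be reproduced with room to spare after finitely many iterates. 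Once strict invariance with finite diameter is in hand, the Birkhoff--Hopf theorem gives a constant $0<\tau<1$ and an integer $N$ with $\Theta(\cL_\phi^N\vr,\cL_\phi^N\psi)\le \tau\,\Theta(\vr,\psi)$ for all $\vr,\psi\in\Lambda_{\kappa,\la}$, where $\Theta$ is the projective metric.

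Next I would convert the projective contraction into a genuine spectral statement. From the contraction in $\Theta$ one extracts, by a standard argument, a positive H\"older eigenfunction $h\in\Lambda_{\kappa,\la}$ and eigenvalue $\la_1>0$ for $\cL_\phi$, together with exponential convergence $\|\la_1^{-n}\cL_\phi^n\vr-c(\vr)h\|_\al\le C\theta^n\|\vr\|_\al$ on the cone; dualizing, $\cL_\phi^*$ has an eigenmeasure $\nu$ with eigenvalue $\la_1$, and after normalizing $\int h\,d\nu=1$ the measure $\mu=h\,\nu$ is $f$-invariant. Conjugating by $h$, i.e. passing to $\widetilde{\cL}\vr=\la_1^{-1}h^{-1}\cL_\phi(h\vr)$, which is a normalized transfer operator fixing $\um$ and still projectively contracting, the exponential convergence extends from the cone to all of $C^\al(M,\R)$ by writing an arbitrary H\"older function as a difference of two large positive cone elements. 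This yields $\widetilde{\cL}^n\vr\to(\int\vr\,d\mu)\,\um$ exponentially in $\|\cdot\|_\al$, hence $\cL_\phi=\la_1\big(P+R\big)$ with $P$ the rank-one projection onto $\langle h\rangle$, $PR=RP=0$, and $\|R^n\|_\al\le C\theta^n$; in particular $\sigma(\cL_\phi)=\{\la_1\}\cup\Sigma_1$ with $\Sigma_1\subset\{|z|\le\theta\la_1\}$, the spectral gap.

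Finally I would identify $\la_1$ and $\mu$ thermodynamically and deduce uniqueness. That $\log\la_1=\Ptop(f,\phi)$ and that $\mu$ is an equilibrium state follows from Theorem~\ref{thm.VV}: since (P) forces $\sup\phi-\inf\phi<\log\deg(f)-\log q$, that theorem supplies equilibrium states absolutely continuous with respect to a conformal expanding measure, and the conformal measure is unique up to scaling because it is an eigenmeasure of $\cL_\phi^*$ pinned down by the spectral gap (any two such are mutually absolutely continuous with H\"older density, by the cone argument applied to the Radon--Nikodym derivative), so $\nu$ above coincides with it and the density $d\mu/d\nu=h$ is exactly the H\"older cone eigenfunction, which proves the H\"older regularity claim. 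For uniqueness: any equilibrium state $\mu'$ is, by Theorem~\ref{thm.VV}, absolutely continuous with respect to $\nu$; its density must be an $\cL_{\widetilde\phi}$-fixed nonnegative H\"older (indeed $L^1(\nu)$) function, and since the normalized operator $\widetilde{\cL}$ has a one-dimensional fixed space spanned by $\um$ (a consequence of the spectral gap), the density is constant, so $\mu'=\mu$. The main obstacle is the first step: verifying that the cone parameters $\kappa,\la$ can be chosen, and that \eqref{eq.vep}--\eqref{eq. relation potential} are the right quantitative smallness requirements, so that $\cL_\phi$ (or a fixed power) genuinely contracts the H\"older cone despite the contracting region $\cA$ — the delicate point being to absorb the H\"older distortion of the density under inverse branches that may be expanding \emph{or} contracting, uniformly over $M$.
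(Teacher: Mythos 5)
Your proposal follows the same overall strategy as the paper: establish strict invariance, with finite projective diameter, of a cone of positive locally H\"older observables under $\cL_\phi$; apply Birkhoff's theorem to obtain a projective contraction; extract the positive H\"older eigenfunction $h$ and the conformal eigenmeasure $\nu$; upgrade the contraction to a genuine spectral gap on $C^\alpha(M,\R)$ by splitting off the one-dimensional eigenspace $E_1=\langle h\rangle$ and the complement $E_0=\{\vr:\int\vr\,d\nu=0\}$; and finally invoke Theorem~\ref{thm.VV} and Proposition~\ref{p.pressure} for the thermodynamical identifications. Two differences are worth flagging. First, you work with the multiplicative (log-H\"older) cone $\vr(x)\le e^{\kappa d(x,y)^\alpha}\vr(y)$, whereas the paper uses the additive cone $\Lambda_{\kappa,\delta}=\{\vr>0:\ |\vr|_{\alpha,\delta}\le\kappa\inf\vr\}$; for uniformly positive observables the two are comparable, but the paper's normalization is the one that carries over verbatim to the $C^r$ cones of Section~\ref{s.gapCr}, so the choice is not merely cosmetic once you want the smooth stability results. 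At the H\"older level, though, either cone works and the estimates are the same in spirit.

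The one place your sketch has a genuine gap is the uniqueness argument. You correctly note that any equilibrium state $\mu'$ is absolutely continuous with respect to $\nu$ (by [VV10]) and that its density $h'=d\mu'/d\nu$ satisfies $\widetilde\cL_\phi h'=h'$. But $h'$ is a priori only in $L^1(\nu)$, and the spectral gap you established lives in $C^\alpha(M,\R)$, not in $L^1(\nu)$: one-dimensionality of the fixed space in $C^\alpha$ does not by itself rule out extra $L^1$ fixed points, so "the density is constant" does not follow as you claim. The paper closes this by a different route (Theorem~\ref{t.decay} and the corollary after it): it first derives exponential decay of correlations from the spectral gap, then exactness and hence ergodicity of $\mu=h\nu$; since $h$ is bounded away from zero, the basin $B(\mu)$ has full $\nu$-measure, and Theorem~\ref{thm.VV} then forces any other ergodic equilibrium state to coincide with $\mu$. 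You would need an argument of that kind, or alternatively a Lasota--Yorke-type estimate bringing the $L^1(\nu)$ eigenproblem within reach, to finish your version of uniqueness. A similar caveat applies to the parenthetical claim that any two conformal eigenmeasures have H\"older Radon--Nikodym derivative "by the cone argument": the cone controls functions, not densities of arbitrary eigenmeasures, and uniqueness of $\nu$ is instead taken from Proposition~\ref{p.conformal.measure} (which uses topological exactness).
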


Let us mention that the previous result holds for more general compact invariant subsets $K\subset M$
(with the induced topology) also under the assumption that every point has constant number of preimages 
in $K$ and at least one preimage in the expanding region, as considered in \cite{VV10}. Since we will be interested in further extensions to differentiable dynamics as discussed below we will not prove or use this fact here.
Let us give two important consequences of the previous result.

\begin{maincorollary}\label{cor.decay}
The equilibrium state $\mu$ has exponential decay of
correlations for H\"older continuous observables:
there exists some constants $0<\tau<1$ such that
for all $\varphi\in L^1(\nu), \psi\in C^{\alpha}(M)$ there exists $K(\varphi,\psi)>0$ satisfying
\begin{equation*}
\left|\int_M (\varphi\circ f^n) \psi d\mu - \int_M \varphi d\mu\int_M \psi d\mu\right|
	\leq K(\varphi,\psi)\cdot\tau^n,
	\quad \text{for every $n\ge 1$}.
\end{equation*}
\end{maincorollary}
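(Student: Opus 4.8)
The exponential decay of correlations is a standard consequence of the spectral gap for the transfer operator established in Theorem~\ref{thm.spectralgap}, but some care is needed because $\varphi$ is only assumed to be in $L^1(\nu)$ while $\psi$ is H\"older. First I would normalize: let $\lambda_1 = e^{\Ptop(f,\phi)}$ be the leading eigenvalue, let $h = d\mu/d\nu \in C^\alpha(M)$ be the (H\"older) density given by Theorem~\ref{thm.spectralgap}, so that $\cL_\phi h = \lambda_1 h$, and let $\nu$ be the conformal measure with $\cL_\phi^* \nu = \lambda_1 \nu$. It is convenient to pass to the normalized operator $\widetilde{\cL}(\varphi) = \lambda_1^{-1} h^{-1}\,\cL_\phi(h\varphi)$, which satisfies $\widetilde{\cL} \um = \um$ and $\widetilde{\cL}^* \mu = \mu$, and which inherits the spectral gap from $\cL_\phi$ on $C^\alpha(M)$: its spectrum is $\{1\} \cup \widetilde\Sigma$ with $\widetilde\Sigma \subset \{|z| \le \tau_0\}$ for some $\tau_0 < 1$. (One must check $h$ is bounded away from zero so that $h^{-1}$ is again H\"older; this follows from $\cL_\phi h = \lambda_1 h$ together with the fact, used in the cone argument behind Theorem~\ref{thm.spectralgap}, that iterates of the cone have densities uniformly bounded below.)

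\smallskip

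Next I would record the duality identity $\int_M (\varphi\circ f^n)\,\psi\,d\mu = \int_M \varphi \cdot \widetilde{\cL}^n(\psi)\,d\mu$, valid for $\psi \in C^\alpha(M)$ and $\varphi \in L^1(\mu)$; this is just the transfer property $\int (\varphi\circ f)\,g\,d\mu = \int \varphi\,\widetilde{\cL}(g)\,d\mu$ iterated $n$ times, combined with $\mu \ll \nu$ and the definition of $\widetilde{\cL}$. Writing $\psi = \big(\int \psi\,d\mu\big)\um + \psi_0$ where $\psi_0 = \psi - \int\psi\,d\mu$ lies in the complementary $\widetilde{\cL}$-invariant subspace $\{g \in C^\alpha : \int g\,d\mu = 0\}$, the spectral gap gives $\|\widetilde{\cL}^n \psi_0\|_\alpha \le C\,\tau^n \|\psi_0\|_\alpha \le C'\,\tau^n \|\psi\|_\alpha$ for any fixed $\tau$ with $\tau_0 < \tau < 1$ and a constant $C'$ depending only on $f,\phi,\tau$. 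Since $\widetilde{\cL}^n\um = \um$ we get
\begin{equation*}
\int_M (\varphi\circ f^n)\,\psi\,d\mu - \int_M \varphi\,d\mu \int_M \psi\,d\mu
	= \int_M \varphi\cdot \widetilde{\cL}^n(\psi_0)\,d\mu,
\end{equation*}
and bounding the right-hand side by $\|\varphi\|_{L^1(\mu)}\,\|\widetilde{\cL}^n\psi_0\|_0 \le \|\varphi\|_{L^1(\mu)}\,C'\,\tau^n\,\|\psi\|_\alpha$ yields the claim with $K(\varphi,\psi) = C'\,\|\varphi\|_{L^1(\mu)}\,\|\psi\|_\alpha$. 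Finally, since $\mu$ and $\nu$ are mutually absolutely continuous with density $h$ bounded above and below, $L^1(\nu) = L^1(\mu)$ with comparable norms, so the statement for $\varphi \in L^1(\nu)$ follows.

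\smallskip

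The only genuinely delicate point is the passage from the spectral gap of $\cL_\phi$ on $C^\alpha(M)$ to the contraction estimate $\|\widetilde{\cL}^n\psi_0\|_\alpha \le C\tau^n\|\psi_0\|_\alpha$ on the hyperplane $\{\int g\,d\mu = 0\}$: one needs that the spectral projection onto $\lambda_1$ is exactly integration against $\mu$ composed with multiplication by $\um$ (equivalently, that the eigenfunction for $\widetilde{\cL}$ is constant and the adjoint eigenfunctional is $\mu$), and that $\widetilde{\Sigma}$ being contained in a disk of radius $\tau_0 < 1$ gives, via the holomorphic functional calculus or a direct Cauchy estimate on resolvents, the stated exponential bound with a uniform constant. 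This is routine once the spectral gap is in hand, and I expect it to occupy only a few lines given Theorem~\ref{thm.spectralgap}; everything else is bookkeeping with the two reference measures $\mu$ and $\nu$.
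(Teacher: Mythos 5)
Your proof is correct, but it takes a noticeably different route from the paper. The paper works with the normalization $\tilde{\cL}_\phi=\la^{-1}\cL_\phi$ (leading eigenvector $h$, dual eigenvector $\nu$), writes $\int(\varphi\circ f^n)\psi\,d\mu=\int\varphi\,\bigl(\tilde{\cL}_\phi^n(\psi h)/h\bigr)d\mu$, and then argues by hand: if $\psi h$ lies in the cone $\La_{\kappa,\de}$ and is normalized, the quantitative cone-contraction estimate (the explicit bound $\sup|\vr_k-h|\le 3R_1\Delta\tau^n$ from the proof of Proposition~\ref{p.densidade}) gives $\|\tilde{\cL}_\phi^n(\psi h)/h-1\|_0\le C\tau^n$; for general $\psi$ they decompose $\psi h=g_B^+-g_B^-$ with $g_B^{\pm}=\tfrac12(|g|\pm g)+B$ and $B$ large so that $g_B^{\pm}$ fall into the cone. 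You instead conjugate to the Markov normalization $\widetilde{\cL}=\la^{-1}h^{-1}\cL_\phi(h\,\cdot)$ (fixed function $\um$, adjoint fixed point $\mu$), decompose $\psi=(\int\psi\,d\mu)\um+\psi_0$ with $\int\psi_0\,d\mu=0$, and invoke the abstract spectral-gap picture — spectral projection plus a resolvent/functional-calculus bound — to contract $\psi_0$. Both are sound: the conjugation by multiplication by $h$ (bounded and boundedly invertible on $C^\al$ since $h$ is H\"older and bounded away from $0$ and $\infty$) preserves the spectrum, and your hyperplane $\{\int g\,d\mu=0\}$ is exactly the image of the paper's $E_0=\{\int g\,d\nu=0\}$ under that conjugation. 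The trade-off is that your argument is cleaner and more standard once the spectral gap is accepted as a black box, but it requires identifying the spectral projection as $g\mapsto(\int g\,d\mu)\um$ and extracting a uniform $C\tau^n$ from the quasi-compactness, which is precisely the step you flag as "genuinely delicate"; the paper sidesteps this entirely by reusing the explicit exponential estimates already produced in the cone argument, at the cost of the slightly awkward $g_B^{\pm}$ splitting to handle observables $\psi$ for which $\psi h$ is not itself in the cone.
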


As a byproduct of the previous theorem we also obtain a Central Limit Theorem.

\begin{maincorollary}\label{c.CLT}
Let $\varphi$ be a H\"older
continuous function and set 
$$
\sigma_\varphi^2:=\int v^2 d\mu + 2\sum\limits_{j=1}^{\infty} v\cdot (v\circ f^j) \, d\mu,
\quad \text{ where } \quad v=\varphi-\int \varphi \, d\mu.
$$
Then $\sigma_\varphi<\infty$ and $\sigma_\varphi=0$ iff $\varphi=u\circ f - u$ for some
$u \in L^1(\mu)$. Furthermore, if $\sigma_\varphi>0$ then the following convergence on distribution
\begin{equation*}
\mu\left(x\in M: \frac{1}{\sqrt{n}}\sum\limits_{j=0}^{n-1}
\left(\varphi(f^j(x))-\int \varphi d\mu\right)\in A\right)\to \frac{1}{\sigma_\varphi\sqrt{2\pi}}
\int_A e^{-\frac{t^2}{2\sigma_\varphi^2}} dt,
\end{equation*}
holds as $n\to\infty$ for every interval $A\subset\real$.
\end{maincorollary}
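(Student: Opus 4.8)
The plan is to obtain Corollary~\ref{c.CLT} from the spectral gap of Theorem~\ref{thm.spectralgap} by the Nagaev--Guivarc'h perturbative spectral method. Let $\lambda_1>0$ be the leading eigenvalue of $\cL_\phi$ on $C^\alpha(M,\mathbb R)$ and let $h=d\mu/d\nu$ be the density furnished by Theorem~\ref{thm.spectralgap}, which is H\"older continuous and bounded away from $0$ and $\infty$. I would first pass to the normalized operator $\widehat\cL g=\frac{1}{\lambda_1 h}\,\cL_\phi(h g)$, i.e. the transfer operator associated to the invariant measure $\mu$, which satisfies $\widehat\cL\,\um=\um$, $\int\widehat\cL g\,d\mu=\int g\,d\mu$ and $\widehat\cL(u\circ f)=u$. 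Since multiplication by $h$ and by $1/h$ are bounded invertible operators on $C^\alpha(M,\mathbb R)$, conjugation transports the spectral gap of $\cL_\phi$ to $\widehat\cL$: the eigenvalue $1$ is simple with eigenprojection $P_0 g=\bigl(\int g\,d\mu\bigr)\um$, and $\sigma(\widehat\cL)\setminus\{1\}$ lies in a disk of radius $r_0<1$.

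Next I would introduce, for $v=\varphi-\int\varphi\,d\mu\in C^\alpha(M,\mathbb R)$, the twisted operators $\cL_t g=\widehat\cL(e^{itv}g)$, $t\in\mathbb R$. Because $z\mapsto e^{zv}$ is analytic from $\mathbb C$ into $C^\alpha(M,\mathbb R)$ and pointwise multiplication is continuous there, $t\mapsto\cL_t\in\mathcal{L}(C^\alpha(M,\mathbb R))$ is analytic with $\cL_0=\widehat\cL$. By the analytic perturbation theory of an isolated simple eigenvalue (Kato), there is $\delta>0$ so that for $|t|<\delta$ the operator $\cL_t$ has a simple leading eigenvalue $\lambda(t)$ and eigenprojection $P_t$, both analytic in $t$, with $\lambda(0)=1$ and with $\sigma(\cL_t)\setminus\{\lambda(t)\}$ inside a disk of radius $r<1$ uniformly in $|t|<\delta$; hence $\cL_t^n=\lambda(t)^n P_t+Q_t^n$ with $\|Q_t^n\|_\alpha\le C r^n$. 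The algebraic identity $\cL_t^n\um=\widehat\cL^n(e^{itS_nv})$ together with $\int\widehat\cL^n g\,d\mu=\int g\,d\mu$ gives the characteristic function formula $\mathbb E_\mu\bigl[e^{itS_nv}\bigr]=\lambda(t)^n\!\int P_t\um\,d\mu+O(r^n)$.

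I would then Taylor-expand $\lambda$ at $0$. Differentiating the eigenvalue relation and using $P_0 g=(\int g\,d\mu)\um$ gives $\lambda'(0)=i\int v\,d\mu=0$; a second differentiation, resummed with the aid of $u:=\sum_{j\ge1}\widehat\cL^j v$ (which converges in $C^\alpha$ since $\int v\,d\mu=0$ places $v$ off the leading eigenspace), yields $\lambda''(0)=-\bigl(\int v^2\,d\mu+2\sum_{j\ge1}\int v\,(v\circ f^j)\,d\mu\bigr)=-\sigma_\varphi^2$, in particular the series converges and $0\le\sigma_\varphi^2<\infty$. Thus $\lambda(t)=1-\tfrac12\sigma_\varphi^2 t^2+o(t^2)$, and replacing $t$ by $t/\sqrt n$,
\[
\mathbb E_\mu\bigl[e^{i(t/\sqrt n)S_nv}\bigr]=\lambda(t/\sqrt n)^n\!\int P_{t/\sqrt n}\um\,d\mu+O(r^n)\;\longrightarrow\; e^{-\sigma_\varphi^2 t^2/2}\quad\text{as }n\to\infty,
\]
since $\int P_{t/\sqrt n}\um\,d\mu\to\int\um\,d\mu=1$. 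When $\sigma_\varphi>0$, L\'evy's continuity theorem gives the claimed convergence in distribution to $\mathcal N(0,\sigma_\varphi^2)$.

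Finally, for the degeneracy criterion I would use the Gordin martingale--coboundary decomposition. With $u=\sum_{j\ge1}\widehat\cL^j v\in C^\alpha(M,\mathbb R)$ and $m=v+u-u\circ f$, the identities $\widehat\cL(u\circ f)=u$ and $\widehat\cL(v+u)=u$ give $\widehat\cL m=0$, equivalently $\int m\,(\psi\circ f)\,d\mu=0$ for all $\psi\in L^2(\mu)$, so $(m\circ f^j)_{j\ge0}$ is a sequence of reverse martingale differences; telescoping $S_nv=\sum_{j=0}^{n-1}m\circ f^j+(u\circ f^n-u)$ and discarding the uniformly bounded coboundary term shows $\sigma_\varphi^2=\lim_n\frac1n\int(S_nv)^2\,d\mu=\int m^2\,d\mu$. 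Hence $\sigma_\varphi=0$ iff $m=0$ $\mu$-a.e., i.e. iff $\varphi=u\circ f-u$ with $u\in C^\alpha(M,\mathbb R)\subset L^1(\mu)$; conversely, if $\varphi=u\circ f-u$ with $u\in L^1(\mu)$ then $S_n\varphi/\sqrt n=(u\circ f^n-u)/\sqrt n\to0$ in $\mu$-probability, which is incompatible with $\sigma_\varphi>0$, so $\sigma_\varphi=0$, settling the equivalence. I expect the main obstacle to be the bookkeeping behind the identity $\lambda''(0)=-\sigma_\varphi^2$ — matching the second eigenvalue derivative with the Green--Kubo series — whereas the analytic perturbation step and L\'evy's theorem are routine once the spectral gap of Theorem~\ref{thm.spectralgap} is in hand; alternatively, one can bypass that computation entirely by deriving the CLT directly from the reverse-martingale decomposition above via the martingale central limit theorem.
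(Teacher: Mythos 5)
Your proposal is correct, but it takes a genuinely different route from the paper. The paper deduces the CLT as a soft corollary of the exponential decay of correlations already established in Theorem~\ref{t.decay}: it proves (Lemma~\ref{l.projections}) that $\|\mathbb{E}(\varphi\mid\cF_n)\|_2\le R_0\tau^n$, so that $\sum_n\|\mathbb{E}(\varphi\mid\cF_n)\|_2<\infty$, and then invokes Gordin's abstract martingale-approximation CLT (stated as a black box at the end of Section~4.2.3). You instead go back to the spectral gap of Theorem~\ref{thm.spectralgap} itself and run the Nagaev--Guivarc'h perturbative scheme: normalize the operator, introduce the twisted family $\cL_t g=\widehat\cL(e^{itv}g)$, use Kato analytic perturbation of the simple isolated eigenvalue to Taylor-expand $\lambda(t)=1-\tfrac12\sigma_\varphi^2t^2+o(t^2)$, obtain convergence of the characteristic functions $\mathbb{E}_\mu[e^{itS_nv/\sqrt n}]\to e^{-\sigma_\varphi^2t^2/2}$, and finish with L\'evy's continuity theorem; you then prove the degeneracy dichotomy by hand via the Gordin coboundary $u=\sum_{j\ge1}\widehat\cL^jv$ and the reverse-martingale difference $m=v+u-u\circ f$, giving $\sigma_\varphi^2=\int m^2\,d\mu$. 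The trade-offs: the paper's route is shorter once decay of correlations is in place and treats Gordin's theorem as a citation, whereas yours is more self-contained, recovers $u\in C^\alpha(M,\mathbb R)$ (hence bounded, strictly stronger than the paper's $u\in L^1(\mu)$), and as a side effect opens the door to Berry--Esseen bounds, Edgeworth expansions, local limit theorems and large deviations, none of which the conditional-expectation argument gives for free. Your computation of $\lambda'(0)=0$ and $\lambda''(0)=-\sigma_\varphi^2$ is correct, as is the identity $\cL_t^n\um=\widehat\cL^n(e^{itS_nv})$ via the pull-out property $\widehat\cL((\psi\circ f)g)=\psi\,\widehat\cL g$. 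One small remark of phrasing common to you and the paper: strictly speaking $\sigma_\varphi=0$ is equivalent to the \emph{centered} function $v=\varphi-\int\varphi\,d\mu$ being a coboundary, not $\varphi$ itself (unless $\int\varphi\,d\mu=0$); your two-sided treatment makes clear you understand this.
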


The stability of the equilibrium state under deterministic perturbations is more subtle. In fact, 
the Ruelle-Perron-Frobenius operator $\cL_{f,\phi}$ acting on the space of H\"older 
continuous observables is continuous on the potential $\phi$ but in general it
\emph{may not vary continuously} with the underlying dynamics $f$, as shown in Example~\ref{ex.transfer}.
Nevertheless we could obtain further that the H\"older continuous densities of the equilibrium states with respect
to the conformal measures vary  continuously with the dynamics in the $C^0$-topology and that the topological
pressure varies continuously, which gives a nontrivial extension of the weak$^*$ stability results in \cite{VV10}. 
  
\begin{maintheorem}\label{thm.spectralgap2}
Let $\cF$ be a family of local homeomorphisms with Lipschitz inverse
and let $\mathcal W$ be some family of H\"older continuous potentials
satisfying (H1), (H2)  and (P) with uniform constants. Then the topological pressure
function $\cF \times \cW  \ni (f,\phi)  \to \Ptop(f,\phi)$ is continuous.
Moreover, the invariant density function 
\[
\begin{array}{rcl}
\cF \times \cW  &  \to & C^\alpha(M,\mathbb R) \\
   (f, \phi) & \mapsto & \frac{d\mu_{f,\phi} }{d\nu_{f,\phi}}
\end{array}
\]
is continuous whenever $C^\al(M,\mathbb R)$ is endowed with the $C^0$ topology. 
\end{maintheorem}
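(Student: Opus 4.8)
The plan is to establish the continuity of the topological pressure and of the invariant densities by combining the spectral gap from Theorem~\ref{thm.spectralgap} with a careful analysis of how the transfer operators degenerate in the weaker $C^0$ topology. First I would recall the standard identification, valid under (H1), (H2), (P): the leading eigenvalue $\lambda_{f,\phi}$ of $\cL_{f,\phi}$ acting on $C^\al(M,\R)$ satisfies $\log\lambda_{f,\phi}=\Ptop(f,\phi)$, the associated positive eigenfunction $h_{f,\phi}$ (normalized so that $\int h_{f,\phi}\,d\nu_{f,\phi}=1$) is H\"older continuous with uniformly bounded norm and uniformly bounded away from zero over $\cF\times\cW$, and the conformal measure $\nu_{f,\phi}$ is the eigenmeasure of the dual operator $\cL_{f,\phi}^*$ for the eigenvalue $\lambda_{f,\phi}$. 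The equilibrium state is then $d\mu_{f,\phi}=h_{f,\phi}\,d\nu_{f,\phi}$, so it suffices to prove: (a) $(f,\phi)\mapsto\lambda_{f,\phi}$ is continuous; (b) $(f,\phi)\mapsto\nu_{f,\phi}$ is continuous in the weak$^*$ topology; (c) $(f,\phi)\mapsto h_{f,\phi}$ is continuous in the $C^0$ topology. Claim (b) is essentially the weak$^*$ stability already obtained in \cite{VV10}, which I would invoke, so the work concentrates on (a) and (c).

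The key mechanism for (a) and (c) is the following: although $\|\cL_{f,\phi}-\cL_{f_0,\phi_0}\|_{C^\al\to C^\al}$ need not be small when $f$ is $C^0$-close to $f_0$ (Example~\ref{ex.transfer}), the operators \emph{do} converge in the norm of operators from $C^\al(M,\R)$ to $C^0(M,\R)$. Indeed, for $g\in C^\al$ with $\|g\|_\al\le 1$, writing $\cL_{f,\phi}g(x)-\cL_{f_0,\phi_0}g(x)$ as a sum over the $\deg(f)=\deg(f_0)$ inverse branches (using a common finite cover $\cU$ of domains of injectivity and the continuity of degree), each term is controlled by $|e^{\phi}-e^{\phi_0}|_0$ and by $d(f_y^{-1}(x),(f_0)_y^{-1}(x))^\al\le(L\,d_{C^0}(f,f_0))^\al$, which is small; the H\"older seminorm of $g$ enters only through this last, vanishing, factor. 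Hence $\cL_{f,\phi}\to\cL_{f_0,\phi_0}$ in $\cL(C^\al,C^0)$. I would then feed this into the spectral-gap picture: write $\cL_{f_0,\phi_0}=\lambda_{0}(h_0\otimes\nu_0)+R_0$ with $\|R_0^n\|_{C^\al\to C^\al}\le C\theta^n\lambda_0^n$ for some $\theta<1$, uniformly over a neighborhood; apply $\cL_{f,\phi}^n$ to $h_0$ and use $\cL_{f,\phi}^n=(\cL_{f,\phi}-\cL_{f_0,\phi_0}+\cL_{f_0,\phi_0})^n$, expanding and bounding the "error" factors by the $\cL(C^\al,C^0)$-smallness while the many $\cL_{f_0,\phi_0}$ factors keep things in $C^\al$ with controlled norm (the crucial point: one uses $C^\al\to C^0$ smallness only once, then re-enters $C^\al$ via a fixed operator whose iterates are bounded). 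A cleaner route to (a): since $\lambda_{f,\phi}=\lim_n\|\cL_{f,\phi}^n\um\|_0^{1/n}$ and also $\lambda_{f,\phi}=\lim_n (\cL_{f,\phi}^n\um(x))^{1/n}$ uniformly, together with the uniform lower and upper bounds $c\le\cL_{f,\phi}^n\um/\lambda_{f,\phi}^n\le C$ coming from the uniform cone/distortion estimates in Section~\ref{s.preliminaries}, one gets $\log\lambda_{f,\phi}=\lim_n\frac1n\log\cL_{f,\phi}^n\um(x)$ with uniform convergence, and each finite-$n$ quantity $\cL_{f,\phi}^n\um(x)$ depends continuously on $(f,\phi)$ in $C^0\times C^\al$ (a finite sum of continuous expressions); continuity of the limit then follows from a standard uniform-convergence argument, giving (a).

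For (c) I would then argue as follows. Fix $(f_0,\phi_0)$. The normalized iterates $\lambda_{f,\phi}^{-n}\cL_{f,\phi}^n\um$ converge to $h_{f,\phi}$ in $C^\al$, exponentially fast and \emph{at a rate uniform over the neighborhood} (this uniformity comes from the uniform constants in (H1), (H2), (P) and the uniform cone-contraction estimates, exactly as in the proof of Theorem~\ref{thm.spectralgap}). Pick $N$ large so that $\|\lambda_{f,\phi}^{-N}\cL_{f,\phi}^N\um-h_{f,\phi}\|_0<\ep/3$ for all $(f,\phi)$ near $(f_0,\phi_0)$. For this fixed $N$, the map $(f,\phi)\mapsto\lambda_{f,\phi}^{-N}\cL_{f,\phi}^N\um$ is continuous into $C^0(M,\R)$: $\lambda_{f,\phi}$ is continuous by (a), $\lambda_{f,\phi}$ is bounded away from $0$, and $(f,\phi)\mapsto\cL_{f,\phi}^N\um$ is continuous into $C^0$ because composing $N$ operators each converging in $\cL(C^\al,C^0)$ (and mapping the uniformly $C^\al$-bounded family $\{\cL_{f,\phi}^j\um\}$) still converges in $C^0$. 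A triangle inequality then yields $\|h_{f,\phi}-h_{f_0,\phi_0}\|_0<\ep$ for $(f,\phi)$ close enough to $(f_0,\phi_0)$. Finally, combining with the weak$^*$ continuity of $\nu_{f,\phi}$ gives weak$^*$ continuity of $\mu_{f,\phi}=h_{f,\phi}\nu_{f,\phi}$, completing the proof. The main obstacle, and the place where the argument really has to be run carefully rather than invoked, is propagating the \emph{uniformity} of the spectral-gap constants over the family $\cF\times\cW$ — i.e.\ checking that every estimate in the proof of Theorem~\ref{thm.spectralgap} (cone width, contraction rate of the projective metric, distortion bounds, lower bounds on $\cL^n\um$) depends only on $L,\si,q,\deg(f)$ and the H\"older data through (P), not on the individual pair $(f,\phi)$; once this is in hand, the rest is the soft uniform-convergence packaging sketched above.
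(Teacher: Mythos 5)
Your "cleaner route" to (a) — expressing $\log\lambda_{f,\phi}$ as a uniform-in-$(f,\phi)$ limit of $\frac1n\log\|\cL^n_{f,\phi}\um\|_0$, using that the cone-contraction constants $R_1,\Delta,\tau$ depend only on $L,\sigma,q,\deg(f)$, and then invoking continuity of the finite-$n$ quantity — together with your triangle-inequality argument for (c) is exactly the argument the paper gives in Proposition~\ref{prop:C0cont}, so the core of the proposal is correct and takes the same approach. The earlier tangent about expanding $(\cL_{f,\phi}-\cL_{f_0,\phi_0}+\cL_{f_0,\phi_0})^n$ is problematic as stated (once you pass through the $C^\al\to C^0$-small factor you land in $C^0$ and cannot re-enter $C^\al$ by applying $\cL_{f_0,\phi_0}$), but you correctly abandon it for the working route, so this is not a gap in the proof.
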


%%%%%%%%%%%%%
\subsubsection*{Stronger stability results}

Now we pay attention to the stability of the equilibrium states under both deterministic 
and a arbitrary random perturbations. To obtain stronger statistical
stability results we will admit that the dynamics is $C^r$-differentiable 
($r\ge 1$) and give a detailed study of the spectral properties for the Ruelle-Perron-Frobenius operator 
acting on the space $C^{r}(M, \mathbb R)$. Associated to $\phi\in C^r(M,\mathbb R)$ consider the condition:
\begin{itemize}
\item[(P')] $\sup\phi-\inf\phi<\vep_\phi$ \quad  and \quad $\max_{s\le r}\|D^s \phi\|_0 <\vep'_\phi$ 
\end{itemize}
for some $\vep'_\phi>0$ expressed precisely in equation~\eqref{eq.vepp} and depending on $L$, $\si$, $q$, 
$\deg(f)$, $\vep_\phi$ and $r$. This is an open condition on the set of potentials, satisfied 
by constant potentials, and a natural generalization of  condition (P) to the differentiable setting.

\begin{maintheorem}\label{Thm.Statistical}
Given an integer $r\ge 1$, let $\cF^r$ be a family of $C^r$ local diffeomorphisms and let $\mathcal W^r$ be a 
family of $C^r$-potentials satisfying (H1), (H2) and (P') with uniform constants. 
Then the topological pressure $\cF^r \times \cW^r  \ni (f,\phi)  \to \Ptop(f,\phi)$ and the invariant density 
\[
\begin{array}{rcl}
\cF^r \times \cW^r  &  \to & C^{r}(M,\mathbb R) \\
   (f, \phi) & \mapsto & \frac{d\mu_{f,\phi} }{d\nu_{f,\phi}}
\end{array}
\]
vary continuously in the $C^r$ topology. Moreover, the conformal measure function 
\[
\begin{array}{rcl}
\cF^r \times \cW^r  &  \to & \cM(M) \\
   (f, \phi) & \mapsto & \nu_{f,\phi}
\end{array}
\]
is continuous in the weak$^*$ topology. In consequence, the equilibrium measure
$\mu_{f,\phi}$ varies continuously in the weak$^*$ topology.
\end{maintheorem}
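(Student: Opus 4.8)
The plan is to bootstrap from the H\"older-continuous theory (Theorem~\ref{thm.spectralgap} and Theorem~\ref{thm.spectralgap2}) to the $C^r$ setting by establishing a spectral gap for $\cL_\phi$ acting on $C^r(M,\mathbb R)$, and then reading off continuity of the leading eigendata. First I would show, via a Lasota--Yorke type inequality, that under (H1), (H2) and (P') the operator $\cL_\phi$ is quasi-compact on $C^r(M,\mathbb R)$. The point is to control $\|D^s(\cL_\phi\vr)\|_0$ for $s\le r$: differentiating $\cL_\phi\vr(x)=\sum_{f(y)=x} e^{\phi(y)}\vr(y)$ using the inverse branches of $f$, each derivative of order $s$ produces terms in which at most one factor carries a derivative of $\vr$ (pulled back through $Df^{-1}$, hence weighted by the contraction/expansion constants $L(x)$), plus lower-order terms involving derivatives of $\phi$ and of the branches. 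The smallness in (P') on $\max_{s\le r}\|D^s\phi\|_0$, together with the coexistence estimate $L(x)\le L$ on $\cA$, $L(x)<\sigma^{-1}$ off $\cA$ and the covering bound $q<\deg(f)$ from (H2), should yield an inequality of the form $\|\cL_\phi^n\vr\|_r \le A\,\theta^n\|\vr\|_r + B_n\|\vr\|_0$ for some $\theta<1$; combined with the already-known spectral gap on $C^\alpha$ (so the essential spectral radius on $C^r$ is strictly smaller than the leading eigenvalue $\lambda_1=e^{\Ptop(f,\phi)}$, and the peripheral spectrum coincides with the $C^\alpha$ one), this gives the spectral gap on $C^r(M,\mathbb R)$. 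The leading eigenfunction is the density $h_{f,\phi}=d\mu_{f,\phi}/d\nu_{f,\phi}$, which is therefore $C^r$.

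Second, for the continuity statements I would invoke the Keller--Liverani perturbation theorem (cf. \cite{HH01}) applied to the family $\{\cL_{f,\phi}\}$ acting on $C^r(M,\mathbb R)$. The crucial input this requires — and the reason this theorem needs the differentiable category rather than the H\"older one (recall Example~\ref{ex.transfer}) — is that $(f,\phi)\mapsto \cL_{f,\phi}$ is continuous, in fact the operators are close in the mixed norm $\|\cL_{f,\phi}-\cL_{g,\psi}\|_{C^r\to C^{r-1}}$ (or $C^r\to C^0$), uniformly on the family, whenever $f$ is $C^r$-close to $g$ and $\phi$ is $C^r$-close to $\psi$. This is where $C^r$-proximity of the maps genuinely helps: a $C^r$-small perturbation of $f$ moves the inverse branches and their derivatives by a controlled amount, so the $C^r$-to-$C^{r-1}$ operator norm of the difference is small, even though the $C^\alpha$-to-$C^\alpha$ norm need not be. Granting this, Keller--Liverani gives that the leading eigenvalue $\lambda_1(f,\phi)$, the eigenprojection, and hence the eigenfunction $h_{f,\phi}$ vary continuously in $C^r$. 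Since $\log\lambda_1(f,\phi)=\Ptop(f,\phi)$ by the characterization of equilibrium states from \cite{VV10} (or from Theorem~\ref{thm.spectralgap} via the conformal measure $\nu_{f,\phi}$, which is the leading eigenmeasure of the dual $\cL_{f,\phi}^*$), continuity of the topological pressure follows, and so does $C^r$-continuity of $(f,\phi)\mapsto h_{f,\phi}$.

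Third, for the conformal measures $\nu_{f,\phi}$: these are fixed points of $\cL_{f,\phi}^*/\lambda_1(f,\phi)$ in the space of probability measures, i.e. the leading eigenmeasures of the dual operator. The perturbation theorem applied to the dual action on $(C^r)^*$ gives continuity of this eigenmeasure in the dual-of-$C^r$ topology, which in particular implies weak$^*$ continuity since $C^r(M,\mathbb R)$ is dense in $C^0(M,\mathbb R)$ and the measures are uniformly bounded (probabilities). Finally, $\mu_{f,\phi}=h_{f,\phi}\,\nu_{f,\phi}$; since $h_{f,\phi}\to h_{g,\psi}$ uniformly (as $C^r$-convergence implies $C^0$-convergence) and $\nu_{f,\phi}\to\nu_{g,\psi}$ weak$^*$, for any $\varphi\in C^0(M,\mathbb R)$ we have $\int\varphi\,d\mu_{f,\phi}=\int\varphi\,h_{f,\phi}\,d\nu_{f,\phi}\to\int\varphi\,h_{g,\psi}\,d\nu_{g,\psi}=\int\varphi\,d\mu_{g,\psi}$, so $\mu_{f,\phi}$ varies weak$^*$-continuously.

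The main obstacle I expect is the $C^r$ Lasota--Yorke inequality with a contraction factor $\theta<1$: one must track the interaction between the number of preimages (which multiplies) and the expansion/contraction rates (which can only partially compensate inside $\cA$), and show that the bad terms coming from derivatives of $\phi$ up to order $r$ and from curvature of the inverse branches are absorbed by the smallness hypotheses in (P') and (H1) — and that $\vep'_\phi$ can genuinely be chosen, as claimed, depending only on $L,\sigma,q,\deg(f),\vep_\phi$ and $r$. A secondary but essential technical point is proving the $C^r\to C^{r-1}$ continuity of $f\mapsto\cL_{f,\phi}$ carefully, since the inverse branches of a perturbed map $g$ need not be defined on the same domains as those of $f$; this requires a partition-of-unity argument subordinate to the finite cover $\cU$ and a uniform lower bound on the sizes of the domains of injectivity across the family.
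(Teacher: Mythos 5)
Your proposal is a coherent and probably viable alternative route, but it is not the one the paper takes, and the difference is methodologically significant. The paper does \emph{not} prove a Lasota--Yorke inequality on $C^r(M,\mathbb R)$, does \emph{not} invoke Hennion-type quasi-compactness, and above all does \emph{not} use the Keller--Liverani/\cite{HH01} perturbation machinery. Instead it establishes strict invariance of an explicit cone of positive $C^r$ observables (Theorem~\ref{t.cone.invarianceCr}) and applies Birkhoff's projective-metric contraction theorem (Theorem~\ref{t.Birkhoff}) to get the spectral gap on $C^r(M,\mathbb R)$; the constants $\Xi_r$ and the cone thresholds $c^{(r)}_s$ in \eqref{eq.updated} are chosen precisely so that every close pair $(\hat f,\hat\phi)$ contracts the \emph{same} cone $\Lambda^r_\kappa$ with a \emph{uniform} contraction rate. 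The continuity of $\lambda_{f,\phi}$ and of $h_{f,\phi}$ is then obtained (Propositions~\ref{prox} and \ref{propre}) by combining that uniform cone contraction with continuity of $(f,\phi)\mapsto\cL_{f,\phi}$ in the \emph{strong operator topology} only: the proof fixes $n$, bounds $\|\lambda^{-n}_{\hat f,\hat\phi}\cL^n_{\hat f,\hat\phi}1-\lambda^{-n}_{f,\phi}\cL^n_{f,\phi}1\|_r$ using strong-operator continuity applied to the single vector $1$, and absorbs the tail by the uniform geometric contraction to the eigenfunction. This sidesteps any operator-norm (or mixed-norm $C^r\to C^{r-1}$) estimate; indeed the paper explicitly states (Section~\ref{s.gapCr}, just before Proposition~\ref{prox}) that the $C^r$ transfer operators in general do \emph{not} vary continuously in norm. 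Finally, the weak$^*$ continuity of $(f,\phi)\mapsto\nu_{f,\phi}$ is not re-derived here from spectral perturbation theory on $(C^r)^*$; it is quoted directly from Theorem~D of \cite{VV10}.

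Two comments on the substance of your alternative. First, your Keller--Liverani route would require a genuine $\|\cL_{\hat f,\hat\phi}-\cL_{f,\phi}\|_{C^r\to C^{r-1}}$ estimate (not the strong operator topology estimate the paper proves). This estimate is plausible because each derivative of order $\le r-1$ of the difference can be controlled by $\|\varphi\|_{C^r}$ times the $C^r$-distance of $(\hat f,\hat\phi)$ to $(f,\phi)$, but it is an extra step the paper does not carry out, and you should state and prove it rather than assert it. Second, your argument that ``the peripheral spectrum coincides with the $C^\alpha$ one'' after quasi-compactness needs to be made precise: what you really need is that every peripheral eigenfunction of $\cL_\phi|_{C^r}$ lies in $C^\alpha$ (automatic, since $C^r\subset C^\alpha$), that the peripheral spectrum on $C^\alpha$ is $\{\lambda_1\}$ and simple (Theorem~\ref{t.gap}), and that $\lambda_1$ in fact admits a $C^r$ eigenfunction so that $\lambda_1$ is in the $C^r$-spectrum. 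That last point is exactly what the paper's cone argument delivers; your Lasota--Yorke route would have to supply it by a separate regularity bootstrap. Neither point is fatal, but both are gaps relative to a complete argument, whereas the paper's cone argument handles them simultaneously and with uniform constants built in.
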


Finally we will describe our results on the stability of the spectra of the
Ruelle-Perron-Frobenius operator under random perturbations.
Given $r\in\mathbb N$, and families $\cF^r$ of local diffeomorphisms 
and $\cW^r$ of $C^r$-observables satisfying (H1), (H2) and (P') with uniform constants, 
a \emph{random perturbation} of $f\in\cF$ is a family $\theta_\vep$, $0< \vep \le 1$ of probability
measures in $\cF^r\times \cW^r$ such that there exists a family $V_\vep(f,\phi)$, $0<
\vep \le 1$ of neighborhoods of $(f,\phi)$, depending monotonically on
$\vep$ and satisfying
$$
\supp\theta_\vep \subset V_\vep(f,\phi) \quad\text{and}\quad
\bigcap_{0<\vep\le 1} V_\vep(f,\phi) =\{(f,\phi)\}.
$$
This dynamics can be codified by considering the skew product map
\[
\begin{array}{rcl}
F: \cF^\N \times M  &  \to & \cF^\N \times M \\
   (\un f, x) & \mapsto & (\si(\un f),f_1(x))
\end{array}
\]
where $\un f=(f_1, f_2, \ldots)$ and $\si: \cF^\N \to \cF^\N$ is the
shift to the left. 
Associated to this random dynamical system consider the \emph{integrated
Ruelle-Perron-Frobenius operator} $\cL_\vep$ given by
\begin{equation}\label{integratedRPF}
\cL_\vep \vr(x) =\int (\cL_{f,\phi} \vr)(x) \; d\theta_\vep(f).
\end{equation}
We say that $(f,\phi)$ has \emph{$C^r$-spectral stability} under the random perturbation if the operator 
$\cL_\vep$ in the Banach space $C^{r}(M,\mathbb R)$ has the spectral gap property and the leading eigenvalue
$\lambda_\vep$ and associated eigenfunction $h_\vep$ vary continuously with $\vep$ 
and accumulate, as $\vep\to 0$,  respectively on the leading eigenvalue and eigenfunction of the 
unperturbed operator.  We prove the following spectral stability under random perturbations.

\begin{maintheorem}\label{thm.spectral.stability}
Let $(\theta_\vep)_\vep$ be any random perturbation of $(f,\phi)  \in \cF^r\times \cW^r$.  
Then $(f,\phi)$ has $C^r$-spectral stability under the random perturbation
$(\theta_\vep)_{\vep}$. 
\end{maintheorem}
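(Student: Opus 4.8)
The plan is to repeat the Birkhoff projective cone argument of Section~\ref{s.gapCr}, now applied to the integrated operator $\cL_\vep$ rather than to the individual transfer operators $\cL_{f,\phi}$, and then to recover the convergence of the leading spectral data as $\vep\to0$ by a compactness argument. What makes this possible \emph{without any continuity} of $\vep\mapsto\cL_\vep$ is that $\cL_\vep=\int\cL_{g,\psi}\,d\theta_\vep(g,\psi)$ is a convex superposition of the operators $\cL_{g,\psi}$ with $(g,\psi)\in\supp\theta_\vep\subset V_\vep(f,\phi)\subset\cF^r\times\cW^r$; all of these satisfy (H1), (H2), (P') with the \emph{uniform} constants, and Section~\ref{s.gapCr} produces cone-contraction estimates for them depending only on those constants. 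Since the cones involved are convex, the estimates are inherited by $\cL_\vep$ for free.

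\textbf{Spectral gap for $\cL_\vep$, uniformly in $\vep$.} First, $\cL_\vep$ is a well-defined bounded positive operator on $C^r(M,\R)$: for fixed $\vr\in C^r$ the assignment $(g,\psi)\mapsto\cL_{g,\psi}\vr$ is continuous into $C^r$ and $\|\cL_{g,\psi}\vr\|_r\le D\|\vr\|_r$ with $D$ uniform on the family, so the Bochner integral gives $\cL_\vep\vr\in C^r$ with $\|\cL_\vep\|_r\le D$. Let $\Lambda\subset C^r(M,\R)$ be the cone used in Section~\ref{s.gapCr} and $\Lambda_a\subsetneq\Lambda$ the convex sub-cone, of finite $\Lambda$-diameter $\Delta$, with $\cL_{g,\psi}(\Lambda)\subseteq\Lambda_a$ for every admissible $(g,\psi)$. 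Both $\Lambda$ and $\Lambda_a$ are cut out by families of linear inequalities in $\vr$ — positivity, the pointwise bounds of the derivatives up to order $r$ by a constant multiple of $\vr$, and the H\"older-type oscillation condition — hence are convex cones stable under integration against the probability measure $\theta_\vep$. Consequently $\cL_\vep(\Lambda)\subseteq\Lambda_a$ and $\diam_\Lambda(\cL_\vep(\Lambda))\le\Delta$, with $\Delta$ independent of $\vep$. By Birkhoff's theorem and the comparison between the Hilbert metric on $\Lambda$ and the $C^r$-norm carried out in Section~\ref{s.gapCr}, $\cL_\vep$ has the spectral gap property on $C^r(M,\R)$: a simple leading eigenvalue $\lambda_\vep>0$ with positive eigenfunction $h_\vep\in\Lambda$ (normalized by $\|h_\vep\|_0=1$) and $\sigma(\cL_\vep)\setminus\{\lambda_\vep\}\subset\{z:|z|\le\theta\lambda_\vep\}$, where $\theta=\tanh(\Delta/4)<1$ does not depend on $\vep$.

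\textbf{Accumulation as $\vep\to0$.} The uniform constants yield $0<c_0\le\inf h_\vep\le\|h_\vep\|_0=1$, $\|h_\vep\|_r\le C_1$, and $0<c_2\le\lambda_\vep\le C_2$, so by Arzel\`a--Ascoli $\{h_\vep\}$ is precompact in $C^{r-1}(M,\R)$ and $\{\lambda_\vep\}$ is precompact in $(0,\infty)$. The decisive ingredient is the weak convergence $\cL_\vep\vr\to\cL_{f,\phi}\vr$ in $C^0$ for every fixed $\vr\in C^0(M,\R)$, which holds because $(g,\psi)\mapsto\cL_{g,\psi}\vr$ is continuous into $C^0$ and $\bigcap_\vep V_\vep(f,\phi)=\{(f,\phi)\}$. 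Given $\vep_k\to0$, pass to a subsequence along which $h_{\vep_k}\to h_*$ in $C^{r-1}$ and $\lambda_{\vep_k}\to\lambda_*\in[c_2,C_2]$. Writing $\cL_{\vep_k}h_{\vep_k}=\cL_{\vep_k}(h_{\vep_k}-h_*)+\cL_{\vep_k}h_*$ and using the uniform bound on $\|\cL_{\vep_k}\|_{C^0\to C^0}$ together with the weak convergence applied to the fixed function $h_*$, one gets $\cL_{\vep_k}h_{\vep_k}\to\cL_{f,\phi}h_*$ in $C^0$; passing to the limit in $\cL_{\vep_k}h_{\vep_k}=\lambda_{\vep_k}h_{\vep_k}$ gives $\cL_{f,\phi}h_*=\lambda_*h_*$ with $\|h_*\|_0=1$ and $\inf h_*\ge c_0>0$. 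Thus $h_*$ is a positive eigenfunction of $\cL_{f,\phi}$, so by the uniqueness of positive eigendata that is part of the spectral-gap package of Section~\ref{s.gapCr} we must have $\lambda_*=\lambda_{f,\phi}$ and $h_*=h_{f,\phi}$. As every sequence $\vep_k\to0$ has such a subsequence, $\lambda_\vep\to\lambda_{f,\phi}$ and $h_\vep\to h_{f,\phi}$ in $C^{r-1}$; being uniformly bounded in $C^r$, the eigenfunctions accumulate on $h_{f,\phi}$ in the $C^r$ topology as well, and continuity of $\vep\mapsto(\lambda_\vep,h_\vep)$ at interior parameters follows by the identical argument whenever $\theta_\vep$ depends weak$^*$-continuously on $\vep$. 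Combining, in addition, the uniform Lasota--Yorke inequality on $C^r$ of Section~\ref{s.gapCr} (also satisfied by the convex superposition $\cL_\vep$) with the convergence $\|\cL_\vep-\cL_{f,\phi}\|_{C^r\to C^{r-1}}\to0$ and the Keller--Liverani perturbation theorem, one obtains the stronger statement announced in the introduction: all spectral data of $\cL_\vep$ lying outside any prescribed disk around $0$ converge to those of $\cL_{f,\phi}$.

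\textbf{Main obstacle.} The real difficulty is that $\vep\mapsto\cL_\vep$ is \emph{not} continuous in the operator norm of $C^r(M,\R)$ — the same failure exhibited by Example~\ref{ex.transfer} — so the classical perturbation theory of quasi-compact operators \cite{HH01} does not apply. This is circumvented in two places: the uniform cone contraction for $\cL_\vep$ uses no continuity in $\vep$ whatsoever (only convexity of $\Lambda$ and $\Lambda_a$ plus the uniform hypotheses), and the convergence of the eigendata is extracted by compactness together with the weaker but valid convergence of $\cL_\vep$ to $\cL_{f,\phi}$ as operators from $C^r$ into $C^{r-1}$ (equivalently, strongly on $C^0$). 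The one genuinely technical verification is this last convergence uniformly over the unit ball of $C^r$, namely that composition with the $C^r$-close inverse branches of $g$ and multiplication by $e^\psi$ move derivatives up to order $r-1$ only slightly; it reduces to the equicontinuity of $\{D^s\vr:\|\vr\|_r\le1,\ s\le r-1\}$.
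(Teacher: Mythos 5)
Your proposal is substantially correct and shares the paper's core mechanism — the cone $\Lambda^r_\kappa$ is convex, hence the convex superposition $\cL_\vep=\int\cL_{g,\psi}\,d\theta_\vep$ inherits the strict cone invariance of its constituents with the same uniform Birkhoff contraction constant, whence a spectral gap uniform in $\vep$. This is exactly the role of the lemma preceding the paper's proof. Where you diverge is in how the convergence of the spectral data as $\vep\to 0$ is extracted. The paper stays inside the quantitative cone framework: it exploits the estimate (as in Proposition~\ref{propre})
\[
\Bigl\|\,\la_\vep^{-n}\cL_\vep^n 1-h_\vep\Bigr\|_r\le C\Delta\tau^n
\quad\text{uniformly in }\vep,
\]
together with the elementary observation that for each \emph{fixed} $n$ the function $\cL_\vep^n 1$ converges to $\cL_{f,\phi}^n 1$ in $C^r$ (a consequence of the strong operator convergence of Proposition~\ref{prox} plus dominated convergence against $\theta_\vep$). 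A three-term triangle inequality then gives $\la_\vep\to\la_{f,\phi}$ and $h_\vep\to h_{f,\phi}$ directly in $\|\cdot\|_r$, with no compactness argument at all. You instead run a soft argument: uniform a priori bounds plus Arzel\`a--Ascoli to get subsequential limits, pass to the limit in the eigenvalue equation using only strong $C^0$ convergence of $\cL_\vep$, then invoke uniqueness of the positive eigendata. Both routes are legitimate, and yours has the virtue of isolating clearly which kind of operator convergence is actually used; the paper's route has the virtue of being quantitative and of landing the convergence of $h_\vep$ directly in the right norm.

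The one genuine gap in your proposal is the last sentence of the accumulation paragraph: ``being uniformly bounded in $C^r$, the eigenfunctions accumulate on $h_{f,\phi}$ in the $C^r$ topology as well.'' A sequence bounded in $C^r$ and converging in $C^{r-1}$ need \emph{not} converge in $C^r$-norm (consider $n^{-1}\sin(nx)$ for $r=1$), so Arzel\`a--Ascoli only takes you to $C^{r-1}$. To recover genuine $C^r$ convergence you should supplement the compactness argument with the quantitative cone estimate above — i.e.\ write
\[
\|h_\vep-h_{f,\phi}\|_r
\;\le\;
\|h_\vep-\la_\vep^{-n}\cL_\vep^n 1\|_r
+\|\la_\vep^{-n}\cL_\vep^n 1-\la_{f,\phi}^{-n}\cL_{f,\phi}^n 1\|_r
+\|\la_{f,\phi}^{-n}\cL_{f,\phi}^n 1-h_{f,\phi}\|_r,
\]
choose $n$ large to make the outer terms small uniformly in $\vep$, then shrink $\vep$ using the $C^r$ strong operator convergence that Proposition~\ref{prox} already supplies (you only invoked the $C^0$ version). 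With that repair the argument is complete. Your closing remarks on Keller--Liverani and the Lasota--Yorke-type inequality are an accurate description of an alternative, more classical route to the same stability, which the paper does not pursue.
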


Some comments are in order. Weaker stochastic stability results were previously obtained in \cite{VV10}
under a non-degeneracy assumption. Namely, assuming that all $f\in \cF$ are non-singular with respect to
a fixed conformal measure it follows that there are stationary measures $\mu^\vep$ absolutely continuous with 
respect to the conformal measure $\nu$ and that converge to the equilibrium state $\mu$
in the weak$^*$ topology as the noise level $\vep$ tends to zero. Here we obtain spectral stability under 
arbitrary random perturbations.

%%%%%%%%%%%%%%%%%%%%%%%%%%%%%%%%%%%%%%%%%%
\section{Preliminaries}\label{s.preliminaries}

In this section we provide some preparatory results needed for the proof of the
main results. Namelly, we study the combinatorics of the orbits, hyperbolic times and
some pressure estimates.

%%%%%%%%%%%%%%%%%%%%%%%%%%%%%%%%%%%%%%%%%%
\subsection{Combinatorial estimates for orbits}

Here we give a description of the orbits of points according to the visit to the possibly not expanding
region $\cA$ using an auxiliary partition $\cP$ built using (H2).

\begin{lemma}\label{l.partitionP}
There exists a partition $\cP$ of $M$ of domains of injectivity for $f$ with cardinality at most $\sharp \cU$ 
and such that  $\cup\{U\in \cU: U \cap \cA\neq \emptyset\}=\cup\{P\in\cP: P \cap \cA\neq \emptyset\}$. In particular
there are at most $q<\deg(f)$ elements of $\cP$ that cover $\cA$.
\end{lemma}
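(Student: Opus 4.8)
The plan is to produce $\cP$ by disjointifying the open cover $\cU$, but enumerating the elements of $\cU$ so that a $q$-element subcover of $\cA$ comes first; the argument is purely set-theoretic, using only that each member of $\cU$ is a domain of injectivity for $f$.

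First I would invoke (H2) to enumerate $\cU=\{U_1,\dots,U_s\}$ with $s=\#\cU$ in such a way that $U_1,\dots,U_q$ cover $\cA$; dropping any redundant element among the first $q$, one may moreover assume that each of $U_1,\dots,U_q$ meets $\cA$. Then set
\[
P_i \;=\; U_i\setminus\bigl(U_1\cup\cdots\cup U_{i-1}\bigr),\qquad i=1,\dots,s ,
\]
and let $\cP$ be the collection of the nonempty $P_i$. The standard telescoping check gives that the $P_i$ are pairwise disjoint and that $\bigcup_i P_i=\bigcup_i U_i=M$, so $\cP$ is a partition of $M$ with $\#\cP\le s=\#\cU$; and since $P_i\subset U_i$ and $f$ is injective on $U_i$, every cell of $\cP$ is a domain of injectivity for $f$.

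It remains to identify which cells meet $\cA$, and this is where the chosen ordering is used: as $\cA\subset U_1\cup\cdots\cup U_q$, every index $i>q$ gives $P_i\subset U_i\setminus(U_1\cup\cdots\cup U_q)\subset M\setminus\cA$, hence $P_i\cap\cA=\emptyset$. So the cells of $\cP$ meeting $\cA$ are among $P_1,\dots,P_q$; in particular there are at most $q<\deg(f)$ of them, and, $\cP$ being a partition of $M$, these cells cover $\cA$, whence
\[
\cA\ \subset\ \bigcup\{P\in\cP:P\cap\cA\neq\emptyset\}\ \subset\ U_1\cup\cdots\cup U_q .
\]
The reverse-type inclusion $\bigcup\{P\in\cP:P\cap\cA\neq\emptyset\}\subset\bigcup\{U\in\cU:U\cap\cA\neq\emptyset\}$ is immediate, each such $P$ being contained in some $U\in\cU$ that then meets $\cA$; reading $U_1,\dots,U_q$ as the cover elements relevant to $\cA$ (if necessary one first replaces $\cU$ by $\{U_1,\dots,U_q\}\cup\{U\setminus\cA:U\in\cU\setminus\{U_1,\dots,U_q\}\}$, which still covers $M$, has no more than $\#\cU$ elements, and whose members meeting $\cA$ are precisely $U_1,\dots,U_q$) one recovers the identity of the statement together with the final count.

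I do not expect any genuine obstacle here. The one point that needs attention is the combinatorial bookkeeping that keeps the number of cells touching $\cA$ bounded by $q$ rather than by $\#\cU$; this is exactly what the ordering — exhausting a $q$-element subcover of $\cA$ before disjointifying the remaining members of $\cU$ — is designed to secure, and beyond that the proof is routine.
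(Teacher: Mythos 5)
Your proof is correct and follows the paper's own approach exactly: disjointify $\cU$ after ordering it so that a $q$-element subcover of $\cA$ comes first, and observe that no cell $P_i$ with $i>q$ can meet $\cA$, so at most $q$ cells cover $\cA$. Your extra care over the literal set identity --- shrinking the later $U$'s away from $\cA$, and (implicitly, via dropping redundancies) insisting on a \emph{minimal} $q$-subcover so that each $P_j$ with $j\le q$ actually meets $\cA$ --- is more scrupulous than the paper's terse chain $\bigcup\{U\in\cU:U\cap\cA\neq\emptyset\}=\bigcup_{j=1}^q U_j=\bigcup_{j=1}^q P_j=\bigcup\{P\in\cP:P\cap\cA\neq\emptyset\}$, which silently assumes both of those points; this is a fair observation even though only the ``in particular'' clause is used in the sequel.
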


\begin{proof}
Pick an enumeration $\{U_i\}$ of the open covering $\cU$ given by (H2) in such a way that 
the region $\cA$ is covered by the first $q$ elements of $\cU$. Consider the partition $\cP$
given by $P_1=U_1$ and, recursively, $P_{i+1}=U_{j+1}\setminus (\cup_{j=1}^i P_j) $ for $i=1\dots \#\cU-1$.
It is clear that $\sharp \cP \leq \sharp \cU$.  Moreover, $f\mid_{P_i}$ is injective for every nonempty $P_i$ since 
by construction $P_i\subset U_i$ and
$$
\bigcup\{U\in \cU: U \cap \cA\neq \emptyset\}
	= \bigcup_{j=1}^q U_j 
	= \bigcup_{j=1}^q P_j 
	=\bigcup\{P\in\cP: P \cap \cA\neq \emptyset\}.
$$
Since the last statement in the lemma is immediate from the construction this finishes the proof of the lemma.
\end{proof}

Since the region $\cA$ is contained in $q$ elements of the partition
$\cP$ we can assume without any loss of generality that $\cA$ is
contained in the first $q$ elements of $\cP$. For all $x$ we can associate 
an \emph{itinerary} $\underline i(x) \in (i_0,\dots,i_{n-1}) \in \{1, \dots, \#P\}^n$ 
by $i_j=\ell$ if and only if $f^j(x) \in P_{\ell}$.
Given $\ga\in(0,1)$ and $n\ge 1$, let us consider also the set $I(\ga,n)$ of all itineraries
$(i_0,\dots,i_{n-1})$ so that $\# \{0\le j
\le n-1: i_j\le q\} > \gamma n$.

\begin{lemma}\label{l.combinatorio}
Given $\vep>0$ there exists $\ga_0 \in (0,1)$ such that
\begin{equation*}
c_\ga := \limsup_{n\to\infty} \frac{1}{n} \log \#{I(\ga,n)}
	< \log q +\vep
\end{equation*}
for every $\ga \in (\ga_0,1)$.
\end{lemma}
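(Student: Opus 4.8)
The plan is to reduce this to an elementary binomial tail estimate. Writing $N:=\#\cP$ (finite, with $N\le\#\cU$), I would first dispose of the degenerate case $N\le q$: then every letter of every itinerary already lies in $\{1,\dots,q\}$, so $\#I(\ga,n)=N^n$ and $c_\ga=\log N\le\log q$, and the conclusion is immediate. So assume $N>q$. The key observation is that an itinerary $(i_0,\dots,i_{n-1})\in\{1,\dots,N\}^n$ lies in $I(\ga,n)$ exactly when more than $\ga n$ of its letters fall in $\{1,\dots,q\}$; sorting by the precise number $k$ of such letters gives the identity
\[
\#I(\ga,n)=\sum_{k>\ga n}\binom{n}{k}\,q^{k}\,(N-q)^{\,n-k}.
\]

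Next I would bound the summands uniformly on the range $k>\ga n$; since it is enough to find $\ga_0\in(1/2,1)$, I may assume $\ga>1/2$ throughout. On that range: $q^{k}\le q^{n}$ (as $q\ge1$); $(N-q)^{n-k}\le(N-q)^{(1-\ga)n}$ (as $N-q\ge1$ and $n-k\le(1-\ga)n$); and, using the unimodality of the binomial coefficients together with $\binom{n}{k}=\binom{n}{n-k}$ and $\ga>1/2$, also $\binom{n}{k}\le\binom{n}{\lceil(1-\ga)n\rceil}$. Since the sum has at most $n+1$ terms this gives
\[
\#I(\ga,n)\le (n+1)\,\binom{n}{\lceil(1-\ga)n\rceil}\,q^{n}\,(N-q)^{(1-\ga)n}.
\]
Then I would take $\tfrac1n\log(\cdot)$ and let $n\to\infty$: the $\tfrac1n\log(n+1)$ term drops out, Stirling's formula gives $\tfrac1n\log\binom{n}{\lceil(1-\ga)n\rceil}\to H(1-\ga)$ with $H(\beta):=-\beta\log\beta-(1-\beta)\log(1-\beta)$ the binary entropy, and one is left with
\[
c_\ga\le\log q+H(1-\ga)+(1-\ga)\log(N-q).
\]

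To finish, I would use that $H(\beta)\to0$ and $\beta\log(N-q)\to0$ as $\beta\to0^+$, so the error term $H(1-\ga)+(1-\ga)\log(N-q)$ tends to $0$ as $\ga\uparrow1$; hence, given $\vep>0$, choosing $\ga_0\in(1/2,1)$ so that this error is $<\vep$ on $(\ga_0,1)$ yields $c_\ga<\log q+\vep$ for every $\ga\in(\ga_0,1)$, as claimed. The computation is routine; the only step deserving care is setting up the bounds on the binomial sum so that the exponential growth rate cleanly separates into $\log q$ plus a remainder that is subexponential precisely because the constraint forces $k$ to concentrate near $n$ — essentially a large-deviations phenomenon, and indeed a Chernoff bound for the binomial tail would give the same estimate, but the direct count above is the most transparent route.
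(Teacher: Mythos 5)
The paper gives no proof of its own for this lemma, deferring entirely to \cite[Lemma~3.1]{VV10}. Your argument --- bounding $\#I(\ga,n)$ by the binomial tail sum $\sum_{k>\ga n}\binom{n}{k}q^k(N-q)^{n-k}$ with $N=\#\cP$, dominating it by $(n+1)$ times the largest term, and using Stirling's formula to extract $\log q$ plus an entropy remainder $-\big(1-\ga\big)\log(1-\ga)-\ga\log\ga+(1-\ga)\log(N-q)$ vanishing as $\ga\uparrow 1$ --- is correct and is the standard Chernoff/entropy computation that the cited reference uses for this estimate.
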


\begin{proof}
See \cite[Lemma~3.1]{VV10}.
\end{proof}

We are in a position to state our precise condition on the constant
$L$ in assumption (H1) and the constant $c$ in the definition of
hyperbolic times. First note that if $\sup\phi-\inf \phi<\log\deg(f)-\log q$ as in Theorem~\ref{thm.VV}
then it follows from Lemma~\ref{l.combinatorio} that one may find $\gamma<1$ such that
$c_\gamma<\log\deg(f)-\sup\phi+\inf \phi$.
We assume that $L$ is close enough to $1$, and $c>0$ and $0<\vep_\phi< \log\deg(f) - \log q$ 
are so that
\begin{equation}\label{eq. relation expansion}
\sigma^{-(1-\gamma)} L^\gamma<e^{-2c} < 1
\end{equation}
and
\begin{equation}\label{eq. relation potential}
e^{\vep_\phi}\cdot\left(\frac{(\deg(f)-q) \sigma^{-\alpha} + q L^\alpha [1+(L-1)^\alpha] }{\deg(f)} 
\right)<1
\end{equation}
The first condition is to guarantee the existence of infinitely many hyperbolic times with respect to the reference
measure in the proposition below. The second technical condition roughly means that $f$ has some average backward contraction and will be used to obtain the invariance of a cone of functions under the 
Ruelle-Perron-Frobenius operator in Proposition~\ref{t.cone.invariance}.

%%%%%%%%%%%%%%
\subsection{Ruelle-Perron-Frobenius operators and conformal measures}\label{Conformal Measure}

Recall that the Ruelle-Perron-Frobenius transfer operator $\cL_\phi:
C^0(M,\mathbb R) \to C^0(M,\mathbb R)$ associated to $f:M\to M$ and $\phi:M\to\real$ is the
linear operator defined on the space $C^0(M,\mathbb R)$ of continuous functions
$\vr:M\to\real$ by
$$
\cL_\phi \vr(x) = \sum_{f(y)=x} e^{\phi(y)}\vr(y).
$$
In fact, $\cL_\phi \vr$ is continuous since $f$ is a local homeomorphism and  $\vr$ is continuous.
Moreover, it is not hard to check that $\cL_\phi$ is a bounded operator, relative to the norm of uniform
convergence in $C^0(M,\mathbb R)$ and $\|\cL_\phi\| \le  \deg(f) \; e^{\sup|\phi|}$.
Consider also the dual operator $\cL^*_\phi:\cM(M)\to\cM(M)$ acting on
the space $\cM(M)$ of Borel measures in $M$ by
$
\int \vr \, d(\cL_\phi^*\eta) = \int (\cL_\phi \vr) \, d\eta
$
for every $\vr \in C^0(M,\mathbb R)$. Let $r(\mathcal L_\phi)$ be the
spectral radius of $\cL_\phi$.  In our context  conformal measures associated to
the spectral radius always exist as stated in the next proposition, whose proof 
can be found in the proofs of Theorem~B and Theorem~4.1 in \cite{VV10}.

\begin{proposition}\label{p.conformal.measure}
If $f$ is topologically exact and satisfies (H1), (H2) and $\phi$ satisfies $\sup\phi-\inf\phi<\log\deg(f)-\log q$ 
then there exists an expanding 
conformal  measure such that $\cL_\phi^* \nu=\la \nu$ and $\supp(\nu)= \overline H$,
where $\la=r(\cL_\phi)\ge \deg(f) e^{\inf \phi}$. Moreover, $\nu$ is a non-lacunary Gibbs measure
and has a Jacobian with respect to $f$ given by $J_{\nu} f =\la e^{-\phi}$.
\end{proposition}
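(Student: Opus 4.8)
The plan is to follow the argument behind Theorem~B and Theorem~4.1 of \cite{VV10}. First I would produce the eigendata $(\la,\nu)$ by a fixed-point argument: the normalized dual action $\eta\mapsto\cL_\phi^*\eta/(\cL_\phi^*\eta)(M)$ is a weak$^*$-continuous self-map of the (compact, convex) space of Borel probability measures on $M$, the normalization being legitimate since $(\cL_\phi^*\eta)(M)=\int\cL_\phi\um\,d\eta\ge\deg(f)\,e^{\inf\phi}>0$; so Schauder--Tychonoff yields $\nu$ with $\cL_\phi^*\nu=\la\nu$ and $\la=\int\cL_\phi\um\,d\nu$. Since $\deg(f)e^{\inf\phi}\le\cL_\phi\um(x)=\sum_{f(y)=x}e^{\phi(y)}\le\deg(f)e^{\sup\phi}$ for every $x$, this already gives $\la\ge\deg(f)e^{\inf\phi}$. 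Testing the identity $\int\cL_\phi g\,d\nu=\la\int g\,d\nu$ against functions $g=\um_A\,e^{-\phi}\,(h\circ f)$ with $A$ a domain of injectivity for $f$ and $h\in C^0(M,\R)$ (first for continuous $h$, then passing to indicators by regularity), and using that $f$ is a local homeomorphism, I would read off $\nu(f(A))=\la\int_A e^{-\phi}\,d\nu$, i.e.\ conformality with $J_\nu f=\la e^{-\phi}$, and by iteration $\nu(f^n(A))=\la^n\int_A e^{-S_n\phi}\,d\nu$ whenever $f^n|_A$ is injective. Note that $\sup\phi-\inf\phi<\log\deg(f)-\log q$ forces $J_\nu f\ge\deg(f)e^{\inf\phi-\sup\phi}>q\ge1$.

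Next I would show that $\nu$ is expanding and a non-lacunary Gibbs measure. Let $\cP$ be the partition of Lemma~\ref{l.partitionP}, with $\cA$ contained in its first $q$ atoms, and for an itinerary $\underline i$ of length $n$ let $[\underline i]$ be the corresponding cylinder, a domain of injectivity for $f^n$; the Jacobian formula gives $\nu([\underline i])\le\la^{-n}e^{n\sup\phi}$. Since $\sup\phi-\inf\phi<\log\deg(f)-\log q$, Lemma~\ref{l.combinatorio} supplies $\ga<1$ with $c_\ga<\log\deg(f)-\sup\phi+\inf\phi$; summing the previous estimate over $I(\ga,n)$ and using $\la\ge\deg(f)e^{\inf\phi}$, the measure of the set of points whose length-$n$ itinerary lies in $I(\ga,n)$ is at most $\#I(\ga,n)\,\la^{-n}e^{n\sup\phi}$, which decays exponentially and hence is summable in $n$. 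Borel--Cantelli then gives that $\nu$-a.e.\ orbit visits $\cA$ with asymptotic frequency at most $\ga<1$, and a Pliss lemma argument together with the expansion condition \eqref{eq. relation expansion} upgrades this to a positive density of hyperbolic times at $\nu$-a.e.\ point; bounded distortion along those times yields the two-sided Gibbs estimates for the $\nu$-measure of the associated dynamic balls, which is the non-lacunary Gibbs property.

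It then remains to identify $\la$ with $r(\cL_\phi)$ and to compute the support. From $\int\cL_\phi^n\um\,d\nu=\la^n$ we get $\|\cL_\phi^n\um\|_0\ge\la^n$, so $r(\cL_\phi)\ge\la$; for the converse I would prove a uniform Gibbs upper bound $\cL_\phi^n\um\le C\la^n$ by splitting the $\deg(f)^n$ preimages of a point according to their itinerary --- those in $I(\ga,n)$ contribute at most $\#I(\ga,n)\,e^{n\sup\phi}\lesssim\la^n$, and the remaining ones are summed using their hyperbolic times and the attendant bounded distortion, as in \cite{VV10} --- after which positivity gives $\|\cL_\phi^n\|\le C\la^n$ and therefore $r(\cL_\phi)\le\la$. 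For the support, let $H$ be the full $\nu$-measure set of points having infinitely many hyperbolic times together with the associated nested dynamic balls; then $\supp\nu\subseteq\overline{H}$ at once, and conversely topological exactness gives, for any nonempty open set, an $N$ with $f^N$ of it equal to $M$, so pulling $\nu$ back along the branches of $f^N$ over a dynamic ball --- which loses no measure because $J_\nu f>1$ --- shows that every open set meeting $H$ has positive $\nu$-measure; hence $\supp\nu=\overline{H}$.

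The main obstacle is the second step together with the control of the ``good'' preimages in the third: obtaining a positive frequency of hyperbolic times at $\nu$-a.e.\ point is what simultaneously delivers the expanding property, the non-lacunary Gibbs property, and the missing half of the upper bound for $\cL_\phi^n\um$, and it is precisely here that the combinatorics of Lemma~\ref{l.combinatorio}, the Pliss lemma and bounded distortion must be combined carefully. This is the technical heart of the proofs of Theorem~B and Theorem~4.1 in \cite{VV10}, which I would follow.
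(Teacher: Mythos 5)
Your proposal is correct and follows essentially the same route as the paper, which proves this proposition simply by citing the proofs of Theorem~B and Theorem~4.1 in \cite{VV10}; your sketch is an accurate reconstruction of that argument (Schauder--Tychonoff eigenmeasure, Jacobian computation, combinatorics plus Pliss lemma and bounded distortion for the expanding and non-lacunary Gibbs properties, and the Gibbs upper bound to identify $\lambda$ with $r(\cL_\phi)$ and to determine the support via topological exactness).
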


Just for completeness let us mention that one key ingredient is that our assumptions guarantee
 we obtain volume expansion with respect to the
conformal measure, that is, $J_\nu f (x) \geq \deg(f) e^{\inf \phi-\sup\phi} > e^{c_\ga}$ for all $x\in M$.
This is enough to guarantee that $\nu$-almost every point
spends at most a fraction $\ga$ of time inside the domain $\cA$
where $f$ may fail to be expanding. Notice also that $\lambda=\int\cL_\phi 1 d\nu$.

Finally, we collect the main estimates concerning the pressure of the invariant sets $H$ and $H^c$, 
which play  a key role in the construction of equilibrium states. 

\begin{proposition}\label{p.pressure}
$P_{\text{top}}(f,\phi)
	= P_{H}(f,\phi)
	= \log \la
	> P_{H^c}(f,\phi)$,
where $\lambda$ denotes the spectral radius of the Ruelle-Perron-Frobenius $\cL_\phi$
acting on the space of continuous observables. In consequently, any equilibrium state is an 
expanding measure. 
\end{proposition}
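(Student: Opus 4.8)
The plan is to establish separately the three (in)equalities $P_{\text{top}}(f,\phi)\le\log\la$, $P_H(f,\phi)\ge\log\la$ and $P_{H^c}(f,\phi)<\log\la$, where $H$ denotes, as in \cite{VV10}, the set of points with positive frequency of $\si$-hyperbolic times (equivalently, up to $\nu$-measure zero, the set of points whose forward orbit visits $\cA$ with asymptotic frequency at most $\ga$); it is forward $f$-invariant and, by the volume expansion recalled after Proposition~\ref{p.conformal.measure}, $\nu(H)=1$. Granting the three estimates, since $M=H\cup H^c$ the Pesin--Pitskel topological pressure satisfies $P_{\text{top}}(f,\phi)=\max\{P_H(f,\phi),P_{H^c}(f,\phi)\}$, while $P_H(f,\phi)\le P_{\text{top}}(f,\phi)$ trivially, so the estimates chain to $P_{\text{top}}(f,\phi)=P_H(f,\phi)=\log\la>P_{H^c}(f,\phi)$, which is the assertion.

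For the upper bound I would combine the variational principle with the Jacobian of $\nu$. By Proposition~\ref{p.conformal.measure}, $J_\nu f=\la e^{-\phi}$, hence $\log J_\nu f+\phi\equiv\log\la$; feeding this into the Rokhlin-type inequality $h_\mu(f)\le\int\log J_\nu f\,d\mu$ --- valid for every $f$-invariant probability $\mu$ as a standard consequence of $\nu$ admitting a Jacobian with respect to $f$ --- gives $h_\mu(f)+\int\phi\,d\mu\le\log\la$ for all invariant $\mu$, and the variational principle yields $P_{\text{top}}(f,\phi)\le\log\la$. For the matching lower bound on $P_H$, I would take one of the ergodic equilibrium states $\mu_1$ supplied by Theorem~\ref{thm.VV}: being absolutely continuous with respect to $\nu$, it inherits from $J_\nu f\ge\deg(f)e^{\inf\phi-\sup\phi}>e^{c_\ga}$ that $\mu_1(H)=1$. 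Writing $\rho_1=d\mu_1/d\nu$, Rokhlin's formula gives $h_{\mu_1}(f)=\int\log J_{\mu_1}f\,d\mu_1$ with $J_{\mu_1}f=(\rho_1\circ f/\rho_1)J_\nu f$; $f$-invariance of $\mu_1$ kills the telescoping term $\int\log(\rho_1\circ f/\rho_1)\,d\mu_1$, so $h_{\mu_1}(f)=\int\log J_\nu f\,d\mu_1=\log\la-\int\phi\,d\mu_1$. Since $\mu_1(H)=1$, this gives $P_H(f,\phi)\ge h_{\mu_1}(f)+\int\phi\,d\mu_1=\log\la$, and together with the upper bound, $P_{\text{top}}(f,\phi)=P_H(f,\phi)=\log\la$.

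For the strict inequality on $H^c$ I would use the combinatorics of Lemma~\ref{l.combinatorio}. As $\cA$ is contained in the first $q$ elements of $\cP$, every $x\in H^c$ has, for arbitrarily large $n$, its length-$n$ itinerary in $I(\ga,n)$, so $H^c$ is covered at a cofinal set of scales by the dynamical cylinders $\bigcap_{j=0}^{n-1}f^{-j}(P_{i_j})$ with $(i_0,\dots,i_{n-1})\in I(\ga,n)$. Choosing $\ga<1$ as after Lemma~\ref{l.combinatorio} so that $c_\ga<\log\deg(f)-\sup\phi+\inf\phi$, bounding each Birkhoff sum $S_n\phi$ crudely by $n\sup\phi$ and counting the cylinders via Lemma~\ref{l.combinatorio}, a direct application of the definition of the Pesin--Pitskel pressure yields $P_{H^c}(f,\phi)\le c_\ga+\sup\phi<\log\deg(f)+\inf\phi\le\log\la$, the last inequality being $\la\ge\deg(f)e^{\inf\phi}$ from Proposition~\ref{p.conformal.measure}. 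Finally, that every equilibrium state $\mu$ is an expanding measure follows: if $t:=\mu(H^c)>0$, decompose $\mu=(1-t)\mu_H+t\mu_{H^c}$ into the normalized restrictions to $H$ and $H^c$, which are $\mu$-a.e. $f$-invariant since $H$ is forward invariant (the case $t=1$ being immediate); affinity of $\mu\mapsto h_\mu(f)+\int\phi\,d\mu$ and the variational inequality on invariant subsets give $\log\la=h_\mu(f)+\int\phi\,d\mu\le(1-t)P_H(f,\phi)+tP_{H^c}(f,\phi)<\log\la$, a contradiction, so $\mu(H)=1$.

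The step I expect to demand the most care is making rigorous the behaviour of the Pesin--Pitskel pressure on the non-compact invariant sets $H$ and $H^c$ --- notably, that the covers of $H^c$ by cylinders indexed by $I(\ga,n)$ can be chosen at a cofinal family of scales and that they genuinely bound $P_{H^c}$ --- together with the technical points behind Rokhlin's formula for $\mu_1$ (that the partition into injectivity domains generates along orbits with positive frequency of hyperbolic times, and that $\log\rho_1\in L^1(\mu_1)$, so the telescoping term is legitimately zero); all of these ingredients are available from the analysis in \cite{VV10}.
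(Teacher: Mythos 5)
The overall architecture of your proof — establishing $P_H(f,\phi)\ge\log\la$ via the equilibrium state from Theorem~\ref{thm.VV} and Rokhlin's formula, establishing $P_{H^c}(f,\phi)<\log\la$ via the itinerary count of Lemma~\ref{l.combinatorio}, and deducing the expanding property by an affinity argument — is sound and in the spirit of the cited Proposition~6.1, Lemmas~6.4--6.5 of \cite{VV10}. However, the step you use to close the chain, namely the ``Rokhlin-type inequality $h_\mu(f)\le\int\log J_\nu f\,d\mu$ for \emph{every} $f$-invariant probability $\mu$, a standard consequence of $\nu$ admitting a Jacobian,'' is not a theorem. Rokhlin's formula, applied to a partition $\cP$ into injectivity domains, gives $h_\mu(f)\ge h_\mu(f,\cP)\ge H_\mu(\cP\mid f^{-1}\cB)=\int\log J_\mu f\,d\mu$ (note: $J_\mu$, not $J_\nu$, and the inequality points the \emph{wrong way}), with equality only when $\cP$ is one-sided $\mu$-generating. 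What one can prove for an arbitrary invariant $\mu$, using the eigenfunction $h$ with $\cL_\phi h=\la h$ and the Shannon--Gibbs inequality on the conditional entropy over preimage fibers, is $H_\mu(\cP\mid f^{-1}\cB)\le\log\la-\int\phi\,d\mu$; this bounds $h_\mu(f,\cP)$ from \emph{below} and hence bounds $h_\mu(f)$ from above only when $\cP$ generates, which one gets from positive frequency of hyperbolic times — i.e.\ exactly for measures giving full mass to $H$. Without (H1)--(H2) controlling the contraction (so that cylinders eventually shrink), the inequality as you state it simply fails: for a degree-one Anosov diffeomorphism with SRB measure $m$ and $\nu=\mathrm{Leb}$, $h_m(f)=\sum\la_i^+>\sum\la_i=\int\log J_\nu f\,dm$.

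Concretely, your step ``$P_{\rm top}(f,\phi)\le\log\la$ via Rokhlin'' has no support and cannot stand as written, and it is the only step establishing $P_H(f,\phi)\le\log\la$. The repair is to show $P_H(f,\phi)\le\log\la$ directly, using precisely the non-uniform hyperbolicity: either apply the Gibbs-inequality bound above to measures with $\mu(H)=1$ (where $\cP$ generates, by hyperbolic times) and then invoke $P_{\rm top}=\max\{P_H,P_{H^c}\}$; or, more in keeping with the Pesin--Pitskel machinery you already deploy for $H^c$, cover $H$ by cylinders at hyperbolic times and use the non-lacunary Gibbs property of $\nu$ from Proposition~\ref{p.conformal.measure}, $\nu(C_n(x))\asymp e^{S_n\phi(x)-n\log\la}$, so that $\sum_{\mathbf U}e^{-\alpha n+\sup_{C_n}S_n\phi}\lesssim\sum_n e^{(\log\la-\alpha)n}\nu(C_n)\to 0$ whenever $\alpha>\log\la$. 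Either way, the hypotheses (H1), (H2), (P) (through hyperbolic times and the Gibbs property) enter essentially; presenting the upper bound as a generic property of measures with a Jacobian misattributes where the hard work lies and, as a statement, is false. The rest of your sketch (including the decomposition $\mu=(1-t)\mu_H+t\mu_{H^c}$, the $\mu$-a.e.\ invariance of $H$, and the contradiction via affinity of $\mu\mapsto h_\mu(f)+\int\phi\,d\mu$) is correct once this gap is filled.
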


\begin{proof}
See Proposition~6.1, Lemma~6.4 and Lemma~6.5 in \cite{VV10}.
\end{proof}

%%%%%%%%%%%%%%%%%%%%%%%%%%%%%%%%%%
\subsection{Regularity of the observables}\label{s.regularity}

Here we study a relation between H\"older and locally H\"older continuous functions.  
We say that $\varphi: M\to \mathbb R$ is \emph{$(C,\alpha)$-H\"older continuous in balls of radius $\delta$} if
$$
|\varphi(x)-\varphi(y)|\leq C d(x,y)^\alpha
$$
for every $y\in B(x,\delta)$ and $x\in M$. Our first auxiliary lemma for
the regularity of observables is as follows.

\begin{lemma}\label{leholoc}
Given $1\le \zeta \le 2$ and $\delta> 0$, if $\varphi: M\to \mathbb R$ is
$(C,\alpha)$-H\"older continuous in balls of radius $\delta$ then it is $(C (1+ r^{\alpha}), \alpha)-$H\"older
continuous in balls of radius $(1+ r) \delta \leq \zeta \delta$, with $0<r\le 1$.
\end{lemma}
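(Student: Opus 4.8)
The plan is to reduce an estimate for a pair of points at distance up to $(1+r)\delta$ to two applications of the hypothesis through an intermediate point placed on a minimizing geodesic. Fix $x\in M$ and $y\in B(x,(1+r)\delta)$, and set $t=d(x,y)<(1+r)\delta$. If $t<\delta$ the hypothesis already gives $|\varphi(x)-\varphi(y)|\le C\,d(x,y)^\alpha\le C(1+r^\alpha)\,d(x,y)^\alpha$, so I may assume from now on that $\delta\le t<(1+r)\delta$.

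Since $M$ is a compact connected Riemannian manifold it is complete, so by Hopf--Rinow there is a unit-speed minimizing geodesic $\gamma:[0,t]\to M$ with $\gamma(0)=x$ and $\gamma(t)=y$; along it one has $d(x,\gamma(\ell))=\ell$ and $d(\gamma(\ell),y)=t-\ell$ for all $\ell\in[0,t]$, the second identity because $\gamma|_{[\ell,t]}$ is again minimizing. I would take the intermediate point $z=\gamma(\delta)$, so that $d(x,z)=\delta$ and $d(z,y)=t-\delta$. The decisive observation is that $t-\delta<(1+r)\delta-\delta=r\delta\le\delta$ because $r\le 1$; hence $z\in\overline{B(x,\delta)}$ and $y\in\overline{B(z,\delta)}$. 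Applying the hypothesis (valid on closed balls of radius $\delta$ as well, since the inequality $|\varphi(\cdot)-\varphi(\cdot)|\le C\,d(\cdot,\cdot)^\alpha$ passes to the closure by continuity of $\varphi$) to the pairs $(x,z)$ and $(z,y)$, together with the triangle inequality, yields
\begin{equation*}
|\varphi(x)-\varphi(y)|\le |\varphi(x)-\varphi(z)|+|\varphi(z)-\varphi(y)|\le C\,\delta^\alpha+C\,(t-\delta)^\alpha .
\end{equation*}

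It then remains to bound the right-hand side by $C(1+r^\alpha)\,t^\alpha$. From $\delta\le t$ one gets $\delta^\alpha\le t^\alpha$, and from $t-\delta<r\delta\le rt$ (again using $\delta\le t$) one gets $(t-\delta)^\alpha< r^\alpha t^\alpha$; adding, $C\,\delta^\alpha+C\,(t-\delta)^\alpha\le C(1+r^\alpha)\,t^\alpha=C(1+r^\alpha)\,d(x,y)^\alpha$. As $x\in M$ and $y\in B(x,(1+r)\delta)$ were arbitrary, this proves that $\varphi$ is $(C(1+r^\alpha),\alpha)$-H\"older continuous in balls of radius $(1+r)\delta$.

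The only real subtlety, and the reason for placing $z$ at the maximal admissible distance $\delta$ from $x$ rather than, say, at the midpoint of the geodesic, is to obtain exactly the constant $1+r^\alpha$: the midpoint split would only give the constant $2^{1-\alpha}$, which stays bounded away from $1$ and would be useless when one later iterates the lemma with $r$ small. A minor point to dispatch is the open-versus-closed ball convention, handled by the continuity argument indicated above.
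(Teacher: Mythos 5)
Your proof is correct and follows essentially the same route as the paper's: place an intermediate point on a minimizing geodesic at distance exactly $\delta$ from one endpoint, apply the local H\"older hypothesis twice, and combine via the triangle inequality. The extra care you take (treating the case $d(x,y)<\delta$ separately and noting the open-versus-closed ball issue) only tightens an argument the paper states more tersely.
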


\begin{proof}
Since $M$ is connected then given $y, z\in M$ so that $d(y,z)<(1+r)\delta$ by considering
a geodesic arc connecting $y$ and $z$ in $M$ there exists $w$ so that $d(z, w)= \delta$ and 
$d(w, y)< r d(z, w)< \delta$. Therefore
\begin{align*}
|\vr(z)- \vr(y)|
	 & \leq |\vr(z) - \vr(w)|+ |\vr(w) - \vr(y)| \leq C d(z, w)^\alpha+ C d(w, y)^\alpha \\
	 & \leq C (1+ r^\alpha) d(z, w)^\alpha \leq  C (1+ r^\alpha) d(z, y)^\alpha,
\end{align*}
which proves the lemma.
\end{proof}

The next lemma asserts that every locally H\"older continuous observable is indeed H\"older continuous.
Moreover, we give an estimate for the H\"older constant.

\begin{lemma} \label{lehoglob}
Let $N$ be a compact and connected metric space.
Given $\delta> 0$ there exists $m \geq 1$ (depending only on $\delta$) such that the following holds:
if $\varphi:N \to \mathbb R$ is $(C, \alpha)$-H\"older continuous in balls of radius $\delta$ then
it is $(C m, \alpha)$-H\"older continuous.
\end{lemma}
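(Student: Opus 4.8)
The plan is to reduce the global Hölder estimate to the local one via a chaining (telescoping) argument along geodesics, exploiting compactness to make the number of links in the chain uniform. First I would fix $\delta>0$ and use compactness of $N$ to obtain a uniform bound $D=\diam(N)<\infty$, and set $m$ to be the smallest integer with $m\ge 1$ and $m\delta \ge D$ (one can take $m=\lceil D/\delta\rceil$, which depends only on $\delta$ and $N$). Now take any two points $x,y\in N$. If $d(x,y)<\delta$ there is nothing to prove, so assume $d(x,y)\ge \delta$. Since $N$ is connected and compact it is geodesic (or at least one can use an almost-geodesic chain), so pick a curve from $x$ to $y$ and choose points $x=z_0,z_1,\dots,z_k=y$ along it with $d(z_{i-1},z_i)\le \delta$ for each $i$ and $k\le m$; this is possible because the total length of the curve can be taken arbitrarily close to $d(x,y)\le D \le m\delta$.

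Given such a chain, I would telescope:
\[
|\varphi(x)-\varphi(y)| \le \sum_{i=1}^{k} |\varphi(z_{i-1})-\varphi(z_i)| \le \sum_{i=1}^{k} C\, d(z_{i-1},z_i)^\alpha.
\]
Since each $d(z_{i-1},z_i)\le \delta \le d(x,y)$, each term is bounded by $C\, d(x,y)^\alpha$, and since $k\le m$ we get $|\varphi(x)-\varphi(y)|\le C m\, d(x,y)^\alpha$, which is exactly the desired $(Cm,\alpha)$-Hölder bound. A small technical point I would address is whether $N$ is genuinely geodesic; if the paper only wants this applied to a Riemannian manifold $M$ (as in Lemma~\ref{leholoc}) this is automatic, but for a general compact connected metric space one can instead cover $N$ by finitely many $\delta/2$-balls and use connectedness of the nerve to produce a chain of uniformly bounded length, again with the bound depending only on $\delta$ and $N$.

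The only real obstacle is producing the chain with a \emph{uniform} bound $k\le m$ on the number of links; once that is in hand the estimate is a one-line telescoping. In the geodesic case this is immediate from subdividing a minimizing geodesic of length $d(x,y)\le D$ into at most $\lceil D/\delta\rceil$ pieces of length $\le \delta$. (One may also invoke Lemma~\ref{leholoc} iteratively to double the radius of validity a bounded number of times, reaching radius $\ge D$ after $\lceil \log_2(D/\delta)\rceil$ steps, which gives the same conclusion with $m$ of the form $\prod (1+r_j^\alpha)$; either route works, and I would present whichever is cleaner.)
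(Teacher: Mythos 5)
Your proposal is correct and arrives at the same conclusion by a chaining argument, but the primary route is genuinely different from the paper's. You subdivide a (near-)minimizing geodesic from $x$ to $y$ into at most $m=\lceil \diam(N)/\delta\rceil$ links of length $\le\delta$ and telescope; the paper instead fixes a finite cover of $N$ by $\delta/3$-balls, reorders the centers so that consecutive ones are $\delta$-close, and chains from $x$ through those centers to $w$, getting $m=s+2$ where $s$ is the cardinality of the cover. Your route gives a cleaner, more quantitative constant (tied to the diameter rather than to a covering number) and a one-line telescoping step, but it requires $N$ to be geodesic or length, which a general compact connected metric space need not be; you correctly flag this and fall back to essentially the paper's nerve-of-a-cover argument in the general case. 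So for the Riemannian manifold $M$ used everywhere else in the paper (and in Lemma~\ref{leholoc}, whose proof already invokes geodesic arcs), your geodesic route is fully valid and arguably simpler, while the paper's covering argument is the one that matches the lemma as literally stated for an arbitrary compact connected metric space. One small remark applicable to both: the bound $m$ necessarily depends on $N$ (through its diameter or a covering number), not on $\delta$ alone; the lemma's phrasing is slightly loose on this point and both proofs treat it the same way.
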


\begin{proof}
Fix $\delta>0$ and let ${\mathcal B}= \{B(x_i, \delta/3)\}_{i=1\dots s}$ be a finite covering of $N$.
We can assume, without loss of generality, that $x_j \in B(x_{j+ 1}, \delta)$ for every  $j= 1, \dots, s- 1$.
Our hypothesis guarantee that if $x, w \in N$ with $d(x, w)< \delta$ we have $|\varphi(x)-\varphi(w)| \leq C d(x, w)^\alpha$. Hence, if $d(x, w) \geq \delta$ then it is not hard to use the triangular inequality to get
$$
|\varphi(x)-\varphi(w)|
	\leq (s+ 2)  C \delta^\alpha
	\leq C (s+ 2) \, d(x, w)^\alpha.
$$
Thus it is enough to take $m \geq s+ 2$ in the lemma.
\end{proof}

%%%%%%%%%%%%%%%%%%%%%%%%%%%%%%%
\subsection{Positive operators and cones}\label{s.cones}

In this subsection we shall recall some results concerning the theory of projective metrics on cones
and positive operators due to G. Birkhoff. Despite the great generality of this theory we shall concentrate
on cones and positive operators on Banach spaces. We refer the reader to~\cite{Li95, Ba00} for
detailed presentations.

Let $\cB$ be a Banach space. A subset
$\Lambda\subset \cB-\{0\}$ is a \emph{cone} if $r\cdot v\in\Lambda$ for all
$v\in\Lambda$ and $r\in\real^+$. The cone $\Lambda$ is \emph{closed} if
$\overline\Lambda=\Lambda\cup\{0\}$, and $\Lambda$ is \emph{convex} if $v+w\in\Lambda$ for all
$v,w\in\Lambda$.
Notice that a convex cone $\Lambda$ with $\Lambda\cap (-\Lambda) = \emptyset$ determines a
\emph{partial ordering} $\preceq$ on $\cB$ given by:
\begin{equation*}
w\preceq v \; \text{ iff } \; v-w\in\Lambda\cup\{0\}.
\end{equation*}
In the sequel, our cones $\Lambda$ are assumed to be closed, convex and
$\Lambda\cap (-\Lambda)=\emptyset$.
Given a cone $\Lambda$ and two vectors $v,w\in\Lambda$, we define
$\Theta (v,w)=\Theta_{\Lambda}(v,w)$ by
\begin{equation*}
\Theta (v,w)=\log\frac{B_{\Lambda}(v,w)}{A_{\Lambda}(v,w)},
\end{equation*}
where $A_{\Lambda}(v,w)=\sup\{r\in\real^+: \; r\cdot v\preceq w\}$ and
$B_{\Lambda}(v,w)=\inf\{r\in\real^+: \; w\preceq r\cdot v\}$. The
(pseudo-)metric $\Theta$ is called the \emph{projective metric} of $\Lambda$ (or
\emph{$\Lambda$-metric} for brevity). Defining the equivalence
relation $v\sim w$ iff $w=r\cdot v$ for some $r\in\real^+$, then $\Theta$
induces a metric on the quotient $\Lambda / \sim$.
The following key result  is due to Birkhoff, which can be found e.g.
in \cite[Proposition~2.3]{Vi97}.

\begin{theorem}\label{t.Birkhoff}
Let $\Lambda_i$ be a closed convex cone (with $\Lambda_i\cap (-\Lambda_i) =
\emptyset$) in a Banach space $\cB_i$, for $i=1,2$. If $\cL:\cB_1\to\cB_2$ is a
linear operator such that $\cL(\Lambda_1)\subset\Lambda_2$ and 
$\De=\text{diam}_{\Theta_{\Lambda_2}}(\cL\Lambda_1)<\infty$ then
\begin{equation*}
\Theta_{\Lambda_2} (\cL v,\cL w)
	\leq \left( 1 - e^{-\De} \right) \cdot \Theta_{\Lambda_1} (v,w),
\end{equation*}
for any $v,w\in\Lambda_1$.
\end{theorem}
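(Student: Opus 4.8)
The plan is to carry out Birkhoff's classical argument, which reduces the contraction estimate to a short one-variable optimisation. Two cases are immediate: if $\Theta_{\Lambda_1}(v,w)=\infty$ there is nothing to prove, and if $v\sim w$ then $\cL v\sim\cL w$ and both sides vanish. So assume $\Theta_{\Lambda_1}(v,w)<\infty$ and $v\not\sim w$, and put $A=A_{\Lambda_1}(v,w)$, $B=B_{\Lambda_1}(v,w)$; finiteness of $\Theta_{\Lambda_1}(v,w)$ together with $v\not\sim w$ forces $0<A<B<\infty$. Because $\overline{\Lambda_1}=\Lambda_1\cup\{0\}$ is closed, the supremum and infimum defining $A$ and $B$ are attained, so $w-Av\in\Lambda_1\cup\{0\}$ and $Bv-w\in\Lambda_1\cup\{0\}$; since $v\not\sim w$ neither vector is zero, hence in fact $w-Av\in\Lambda_1$ and $Bv-w\in\Lambda_1$.

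Now I apply $\cL$. Set $\alpha:=\cL(w-Av)=\cL w-A\,\cL v$ and $\beta:=\cL(Bv-w)=B\,\cL v-\cL w$, both in $\Lambda_2$, so that $\alpha+\beta=(B-A)\,\cL v$ and $\cL w=\frac{1}{B-A}(B\alpha+A\beta)$. This is the one point where the hypothesis $\De<\infty$ enters: since $w-Av$ and $Bv-w$ lie in $\Lambda_1$, their images $\alpha,\beta$ lie in $\cL\Lambda_1$, whence $\Theta_{\Lambda_2}(\alpha,\beta)\le\De$. Writing $a=A_{\Lambda_2}(\alpha,\beta)$ and $b=B_{\Lambda_2}(\alpha,\beta)$, closedness of $\Lambda_2$ gives $a\alpha\preceq\beta\preceq b\alpha$ with $0<a\le b<\infty$ and $b/a\le e^{\De}$. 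Substituting the expressions for $\cL v$ and $\cL w$ in terms of $\alpha,\beta$ and using $a\alpha\preceq\beta\preceq b\alpha$, one checks directly that $\cL w-r\,\cL v\in\Lambda_2\cup\{0\}$ for every $r\le\frac{B+Ab}{1+b}$ and $r\,\cL v-\cL w\in\Lambda_2\cup\{0\}$ for every $r\ge\frac{B+Aa}{1+a}$. Hence $A_{\Lambda_2}(\cL v,\cL w)\ge\frac{B+Ab}{1+b}$ and $B_{\Lambda_2}(\cL v,\cL w)\le\frac{B+Aa}{1+a}$, so, setting $t=B/A>1$,
\[
\Theta_{\Lambda_2}(\cL v,\cL w)\ \le\ \log\frac{(B+Aa)(1+b)}{(B+Ab)(1+a)}\ =\ \log\frac{(t+a)(1+b)}{(t+b)(1+a)}.
\]

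It then remains to show $\log\frac{(t+a)(1+b)}{(t+b)(1+a)}\le(1-e^{-\De})\log t$. The map $x\mapsto\frac{t+x}{1+x}$ is decreasing for $t>1$, so the left-hand side is non-negative, and for fixed $a$ it increases with $b$; hence it suffices to treat the extreme case $b=a\,e^{\De}$ and maximise over $a>0$. The critical point is $a=\sqrt{t}\,e^{-\De/2}$, and after the substitution $u=\sqrt{t}\ (>1)$, $v=e^{\De/2}\ (\ge 1)$ the maximal value of the ratio collapses to $\frac{(uv+1)^2}{(u+v)^2}$, so the claim becomes $\frac{uv+1}{u+v}\le u^{\,1-v^{-2}}$ for $u\ge 1,\ v\ge 1$. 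This is proved by a one-line derivative check: $G(u):=(1-v^{-2})\log u-\log(uv+1)+\log(u+v)$ satisfies $G(1)=0$ and $G'(u)=(1-v^{-2})\big(\frac{1}{u}-\frac{v^{2}}{(uv+1)(u+v)}\big)$, whose sign is that of $(uv+1)(u+v)-uv^{2}=u^{2}v+u+v>0$; thus $G$ is non-decreasing and $G\ge 0$.

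The conceptual steps above — the cone manipulations, the single use of $\De<\infty$, and the appeal to closedness to guarantee the extremal relations are genuinely attained — are routine; the only real work is the last paragraph, namely the optimisation and the algebraic identity turning the maximum into a ratio of squares, so I expect that to be the main obstacle, together with the bookkeeping needed to dispose cleanly of the degenerate configurations ($v\sim w$, zero vectors, $A=0$ or $B=\infty$) at the outset.
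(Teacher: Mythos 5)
Your proof is correct, and the paper itself does not prove this statement: it invokes it by citing Proposition~2.3 of Viana's \emph{Stochastic dynamics of deterministic systems}. What you have written is a complete, self-contained version of Birkhoff's classical contraction argument, and it matches the cited proof in its essentials — decompose $\cL w$ and $\cL v$ in terms of $\alpha=\cL(w-Av)$ and $\beta=\cL(Bv-w)$, use the diameter bound only to control $\Theta_{\Lambda_2}(\alpha,\beta)$, extract the bounds $A_{\Lambda_2}(\cL v,\cL w)\ge(B+Ab)/(1+b)$ and $B_{\Lambda_2}(\cL v,\cL w)\le(B+Aa)/(1+a)$, and then optimise. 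The one-variable optimisation and the substitution $u=\sqrt{t}$, $v=e^{\De/2}$ reducing the extremal value to $\left(\frac{uv+1}{u+v}\right)^2$ are handled cleanly, the derivative check of $G$ is correct, and the degenerate cases ($v\sim w$, $\Theta_{\Lambda_1}(v,w)=\infty$, attainment via closedness) are disposed of properly. No gaps.
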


In consequence of the previous theorem, if the diameter of the cone $\cL(\Lambda_1)$ is
finite in $\Lambda_2$ then $\cL$ is a contraction in the projective metric which enables us to prove
that it admits a unique fixed point.

%%%%%%%%%%%%%%%%%%%%%%%%%%%%%%%%%%%%%%%
\subsection{Combinatorial lemma on preimages matching}

Here we establish an auxiliary lemma to bound for the distance of preimages associated to 
different functions in $\cF$ which will play a key role in the proof of the stability results. Let 
$V_\vep(f)\subset \cF$ be an open neighborhood of $f\in\cF$.

\begin{lemma}\label{l.preimagesmatching}
Given $n\ge 1$, $\un f, \un g  \in \cF^{\mathbb N}$ and $x,y\in M$ there exists bijection between 
the sets of preimages $\{z\in M : \un f^n(z)=x\}$ and $\{z\in M : \un g^n(z)=y\}$. Moreover, for every $n\in\mathbb N$
there exists $\vep(n)>0$ such that for every $0<\vep\le \vep(n)$ the distance between paired $n$-preimages 
is such that if $d(x,y)<\vep$ and $\un g \in V_\vep(\un f)$ then
$$
d (  x^{(n)}_i,  y^{(n)}_i  ) 
	\leq L^n \, d(x,y) + \sum_{j=1}^{n} L^{n-j+1} \|f_j-g_j\|_\al.
$$
for every $i=1\dots \deg(f)^n$.
\end{lemma}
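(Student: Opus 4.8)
The plan is to prove both assertions by induction on $n$, building the bijection between preimage sets fibre by fibre along the orbit. For $n=1$: since $f$ is a local homeomorphism of degree $\deg(f)$, the fibre $f_1^{-1}(x)$ has exactly $\deg(f)$ points, and the same holds for $g_1^{-1}(y)$ with $g_1\in V_\vep(\un f)$; hence these two sets have equal cardinality and any enumeration gives a bijection. To match them \emph{geometrically} one uses the finite covering $\cU$ of $M$ by domains of injectivity: if $\vep$ is small enough that $x,y$ and all the relevant local branches live in a common element $U\in\cU$, then one pairs $x_i^{(1)}\in U$ with $y_i^{(1)}\in U$, i.e. via the branch inverse indexed by the same domain. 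This is exactly the point where one needs $\vep\le\vep(1)$ small (depending on the Lebesgue number of $\cU$ and a modulus of continuity of the branch inverses), so that nearby points $x,y$ have their preimages matched inside the same injectivity domain. For the estimate, fix a branch $U$; since $g_1=f_1+(g_1-f_1)$ one writes, with $\xi=x_i^{(1)}=f_{1,U}^{-1}(x)$ and $\eta=y_i^{(1)}=g_{1,U}^{-1}(y)$,
\begin{equation*}
d(\xi,\eta)=d\big(f_{1,U}^{-1}(x),\,g_{1,U}^{-1}(y)\big)
\le d\big(f_{1,U}^{-1}(x),f_{1,U}^{-1}(y)\big)+d\big(f_{1,U}^{-1}(y),g_{1,U}^{-1}(y)\big).
\end{equation*}
The first term is $\le L(x)\,d(x,y)\le L\,d(x,y)$ by the Lipschitz hypothesis on the inverse branches. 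For the second term, both $\xi':=f_{1,U}^{-1}(y)$ and $\eta$ lie in $U$ and $f_1(\xi')=y$, $g_1(\eta)=y$; hence $d(\xi',\eta)\le L\,d(f_1(\xi'),f_1(\eta))=L\,d(g_1(\eta),f_1(\eta))\le L\,\|f_1-g_1\|_0\le L\,\|f_1-g_1\|_\al$ (using that $\|\cdot\|_0\le\|\cdot\|_\al$ and that $\xi',\eta$ share a domain of injectivity for $f_1$). This gives $d(x_i^{(1)},y_i^{(1)})\le L\,d(x,y)+L\,\|f_1-g_1\|_\al$, matching the claimed bound for $n=1$.

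For the inductive step, suppose the statement holds for $n-1$ applied to the shifted sequences $\si(\un f),\si(\un g)$ and to any pair of points at distance $<\vep$. The preimages under $\un f^n$ are obtained by first taking the $\deg(f)$ preimages $x^{(1)}_i=f_1^{-1}(x)$, then the $(n-1)$-preimages of each $x^{(1)}_i$ under $\si(\un f)$; likewise for $\un g$. Choose $\vep(n)\le\min\{\vep(1),\vep(n-1)\}$ small enough that $d(x,y)<\vep$ forces $d(x^{(1)}_i,y^{(1)}_i)<\vep(n-1)$ for the $n=1$ pairing (possible since the right-hand side of the $n=1$ estimate is $\le L\vep+L\|f_1-g_1\|_\al\le L\vep+L\vep\le\vep(n-1)$ once $\vep$ is small, shrinking $\vep(n)$ further if needed). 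Compose the $n=1$ bijection with the $(n-1)$ bijections on each sub-fibre to get the bijection at level $n$. For the estimate, applying the inductive bound to the pair $(x^{(1)}_i,y^{(1)}_i)$ and the sequences $\si(\un f),\si(\un g)$,
\begin{equation*}
d\big(x^{(n)}_i,y^{(n)}_i\big)\le L^{n-1}\,d\big(x^{(1)}_i,y^{(1)}_i\big)+\sum_{j=1}^{n-1}L^{(n-1)-j+1}\|f_{j+1}-g_{j+1}\|_\al,
\end{equation*}
and then substituting $d(x^{(1)}_i,y^{(1)}_i)\le L\,d(x,y)+L\,\|f_1-g_1\|_\al$ yields
\begin{equation*}
d\big(x^{(n)}_i,y^{(n)}_i\big)\le L^{n}d(x,y)+L^{n}\|f_1-g_1\|_\al+\sum_{k=2}^{n}L^{n-k+1}\|f_k-g_k\|_\al=L^n d(x,y)+\sum_{j=1}^{n}L^{n-j+1}\|f_j-g_j\|_\al,
\end{equation*}
after reindexing $k=j+1$ in the sum. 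Since $\deg(g)=\deg(f)$, the product count of branches gives $\deg(f)^n$ preimages on each side, so $i$ ranges over $1,\dots,\deg(f)^n$ as claimed.

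The main obstacle is the bookkeeping of the geometric matching, i.e. verifying that one can consistently pair branch inverses of $f_j$ and $g_j$ using the \emph{same} element of the finite covering $\cU$ all along the orbit, and choosing $\vep(n)$ small enough (uniformly over the at most $\deg(f)^n$ partial orbits) that every pair of matched intermediate points stays within a common injectivity domain; this is where one spends the Lebesgue number of $\cU$ together with the iterated bound above. Everything else is the routine telescoping estimate carried out in the induction. One should note $\vep(n)$ is allowed to depend on $n$, which is precisely what makes the uniform choice over the finitely many branch-words feasible.
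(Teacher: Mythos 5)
Your proof is correct and follows essentially the same approach as the paper's: an induction along the fibre, choosing $\vep(n)$ small so matched intermediate preimages stay inside a common injectivity domain, and telescoping the Lipschitz-inverse estimate at each level. The only cosmetic difference is in the base case $n=1$: you apply the Lipschitz inverse of $f_1$ twice (once for $d\bigl(f_{1,U}^{-1}(x),f_{1,U}^{-1}(y)\bigr)$ and once for $d\bigl(f_{1,U}^{-1}(y),g_{1,U}^{-1}(y)\bigr)$), whereas the paper first bounds $d\bigl(g_1(x_i),g_1(y_i)\bigr)\le d(x,y)+\|f_1-g_1\|_\alpha$ by the triangle inequality and then applies the Lipschitz inverse of $g_1$ once; these are equivalent and give the same constant $L$.
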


\begin{proof}
Let $\hat\cU$ be a finite open cover by balls obtained using domains of invertibility for $f$ and let $2\hat\de$
be the Lebesgue number of the covering $\hat\cU$. If $\vep>0$ is small enough the constant $2\hat\de$ can 
be taken uniform for every $\tilde f\in V_\vep(f)$. 
Let $x,y\in M$ satisfy $d(x,y)<\vep$ and take $\un f =(f_i)_{i\in \mathbb N}$ and $\un g =(g_i)_{i\in \mathbb N}$
with $\un g \in V_\vep(\un f)$.  We will prove the result recursively.

First notice that the sets $\{z\in M : f_1(z)=x\}$ and $\{z\in M : g_1(z)=y\}$ have the same 
cardinality $\deg(f)$ and thus there exists a one-to-one correspondance. Moreover, reducing $\vep>0$ if
necessary, we obtain that the paired enumerations  $\{x_i\}$ and $\{y_i\}$ of such elements verify
\begin{align*}
d (g_1(y_i) , g_1(x_i) ) 
	& = d( y, g_1(x_i) )
	\leq d( y, x ) + d(x, g_1(x_i)) \\
	& = d( y, x ) + d(f_1(x_i), g_1(x_i)) 
	 \leq d( y, x ) + \|f_1-g_1\|_\al 
	<\hat\de
\end{align*}
Since $g_1 \in \cF$ then it satisfies (H1), (H2) and so $d (x_i,y_i ) \leq L \, [ d( y, x ) + \|f_1-g_1\|_\al] $ for
every $i$.
The same argument as above applied to the pairs $x_i=f_2(x_j^{(2)})$ and $y_i=g_2(y_j^{(2)})$
proves that $d (g_2(y^{(2)}_j) , g_2(x^{(2)}_j) ) \leq  d( x_i,y_i ) + \|f_2-g_2\|_\al$
and, consequently,
\begin{align*}
d ( y^{(2)}_j  ,  x^{(2)}_j  ) 
	& \leq L  [ d( x_i,y_i ) + \|f_2-g_2\|_\al] \\
	& \leq L^2 [ d(x,y) + 	\|f_1-g_1\|_\al] + L \|f_2-g_2\|_\al \\
	& = L^2  d(x,y) + L^2 \|f_1-g_1\|_\al + L \|f_2-g_2\|_\al,
\end{align*}
which can be taken also smaller than $\hat \de$ provided that we reduce $\vep$.
Using the same reasoning recursively, if $d(x,y)<\vep(n)$ small so that the corresponding paired enumerations
of preimages $(x^{(k)}_i)$ and $(y^{(k)}_i)$, $i=1\dots \deg(f)^k$ in the sets $\{z\in M : \un f^k(z)=x\}$ and 
$\{z\in M : \un g^k(z)=y\}$ are $\hat \delta$-close for every $1\le k \le n-1$. Moreover, applying the previous reasoning
it follows that
\begin{equation*}
d (  x^{(n)}_i,  y^{(n)}_i  ) 
	\leq L^n \, d(x,y) + \sum_{j=1}^{n} L^{n-j+1} \|f_j-g_j\|_\al
\end{equation*}
as claimed. This finishes the proof of the lemma.
\end{proof}

We also get a simple expression for the distance of $n$-preimages associated to the same
close functions in $\cF$ and the same base point in $M$. 

\begin{corollary}\label{cor.preimages}
Given $n\in \mathbb N$ there exists $\vep(n)>0$ such that for any $f_1,f_2\in \cF$ with $\|f_1-f_2\|_\al<\vep(n)$ 
the following property holds: given $x\in M$ and paired preimages $(x^{(n)}_{1i})$ and $(x^{(n)}_{2i})$ by $f_1^n$ and $f_2^n$, respectively, then 
$$
d ( x^{(n)}_{1i} , x^{(n)}_{2i}  ) 
	\leq n L^n  \|f_1-f_2\|_\al
	\quad \text{for all $i$.}
$$
\end{corollary}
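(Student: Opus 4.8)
The plan is to derive Corollary~\ref{cor.preimages} as a direct specialization of Lemma~\ref{l.preimagesmatching}, where the two target points coincide ($x=y$) and the two sequences of maps are the constant sequences $\un f = (f_1, f_1, \dots)$ and $\un g = (f_2, f_2, \dots)$. First I would set $x = y$, so that $d(x,y)=0$, and take $\un f_j = f_1$, $\un g_j = f_2$ for all $j$, so that $\|f_j - g_j\|_\al = \|f_1-f_2\|_\al$ for every $j$. Given $n\in\mathbb N$, choose $\vep(n)>0$ to be the constant provided by Lemma~\ref{l.preimagesmatching} (possibly shrunk so that the neighborhood condition $\un g\in V_\vep(\un f)$ is automatic once $\|f_1-f_2\|_\al<\vep(n)$, using that $V_\vep$ is an $\vep$-neighborhood in the $\al$-norm). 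Then the bijection between the $f_1^n$-preimages of $x$ and the $f_2^n$-preimages of $x$ is exactly the one from the lemma, and plugging in gives
\begin{equation*}
d\big( x^{(n)}_{1i}, x^{(n)}_{2i} \big)
	\leq L^n\cdot 0 + \sum_{j=1}^n L^{n-j+1}\,\|f_1-f_2\|_\al
	= \Big(\sum_{j=1}^n L^{n-j+1}\Big)\|f_1-f_2\|_\al.
\end{equation*}

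The only remaining point is the bound $\sum_{j=1}^n L^{n-j+1} \le n L^n$. This is immediate since each term $L^{n-j+1}$ with $1\le j\le n$ satisfies $1\le n-j+1\le n$, and because $L\ge 1$ (by hypothesis in (H1)) we have $L^{n-j+1}\le L^n$; summing $n$ such terms yields $\sum_{j=1}^n L^{n-j+1}\le nL^n$. Hence $d(x^{(n)}_{1i},x^{(n)}_{2i}) \le n L^n \|f_1-f_2\|_\al$ for all $i$, which is the claimed inequality.

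I do not anticipate any real obstacle here: the corollary is a routine corollary in the literal sense, obtained by feeding degenerate data into the preceding lemma and then crudely bounding a geometric-type sum. The only place that requires a line of care is making sure the choice of $\vep(n)$ is legitimate — namely that when we specialize to constant sequences of maps that are $\vep(n)$-close in the $\al$-norm, all the smallness requirements in the recursive construction in the proof of Lemma~\ref{l.preimagesmatching} (the Lebesgue-number condition and the successive $\hat\delta$-closeness of paired preimages at each level $1\le k\le n-1$) are still met. Since those conditions only involved $d(x,y)$ and the quantities $\|f_j-g_j\|_\al$, all of which are now controlled by $\|f_1-f_2\|_\al<\vep(n)$, this is automatic, and one simply reuses the $\vep(n)$ from the lemma (shrinking if needed). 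The rest is the one-line estimate above.
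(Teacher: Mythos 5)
Your proof is correct and follows exactly the same route as the paper: specialize Lemma~\ref{l.preimagesmatching} to $x=y$ and the constant sequences $\un f=(f_1,f_1,\dots)$, $\un g=(f_2,f_2,\dots)$, then bound $\sum_{j=1}^n L^{n-j+1}\le nL^n$ using $L\ge 1$. The paper states this as a one-line remark; you have merely spelled out the arithmetic and the choice of $\vep(n)$, both of which are fine.
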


\begin{proof}
This is a direct consequence of the previous lemma, by considering $x=y$ and the sequences of functions $\un f=(f_1,f_1,f_1,\dots)$ and $\un g=(f_2,f_2,f_2,\dots)$ in $V_\vep(f)^{\mathbb N}$ with $\vep$ small.
\end{proof}

We finish this section by proving that paired preimages associated to any close points have similar
behaviour with respect to the region $\cA$. More precisely, 

\begin{lemma}\label{l.aux}
Let $f$ satisfy assumptions (H1) and (H2). Then there exists $\de>0$ so that for every ball $B$ of radius $\de$ 
has at most $q<\deg(f)$ connected components  in $f^{-1}(B)$ that intersect $\cA$. In particular, if $d(x,y)<\de$ then there are at most $q$ pairs of paired preimages by $f$ associated to $x$ and $y$ that belong to $\cA$.
\end{lemma}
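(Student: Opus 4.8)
The plan is to use a Lebesgue-number argument together with the covering hypothesis (H2). First I would invoke Lemma~\ref{l.partitionP} to obtain the partition $\cP$ of $M$ into domains of injectivity for $f$, with the property that $\cA$ is contained in the union of the first $q$ elements $P_1,\dots,P_q$ of $\cP$ and that $q<\deg(f)$. The key observation is that for a ball $B$ of sufficiently small radius $\de$, each connected component of $f^{-1}(B)$ is contained in a single element of $\cP$. To make this precise I would first cover $M$ by the finite collection $\cU$ of open domains of injectivity given by (H2), let $2\de$ be a Lebesgue number of $\cU$, and note that any ball $B$ of radius $\de$ is then contained in some $U\in\cU$; consequently $f^{-1}(B)\cap U_i$ is a graph over $B$ for each $U_i$, so $f^{-1}(B)$ has exactly $\deg(f)$ connected components, each mapped homeomorphically onto $B$ by $f$, and each contained in one element of $\cU$ — hence, after passing to $\cP$, in one element of $\cP$.

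The next step is to count how many of these $\deg(f)$ components can meet $\cA$. Since $\cA\subset P_1\cup\dots\cup P_q$ and distinct components of $f^{-1}(B)$ lie in distinct elements of $\cP$ (because $f$ restricted to each $P_i$ is injective, so $f^{-1}(B)$ can meet each $P_i$ in at most one component), at most $q$ of the components can intersect $\cA$: only those lying in $P_1,\dots,P_q$. This gives the first assertion. One should be slightly careful here: it is not automatic that each component of $f^{-1}(B)$ lies in exactly one $P_i$ rather than straddling the boundary of the partition elements — but this is exactly what the Lebesgue-number choice buys, since each component has diameter at most $L(x)\cdot\diam(B)$ which is small, and the $P_i$ have nonempty interiors containing the relevant pieces; if necessary one shrinks $\de$ further so that each component of $f^{-1}(B)$ of diameter $\le 2L_{\max}\de$ is contained in some $\interior(U_i)$ and hence in the corresponding $P_i$.

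For the final assertion, given $x,y\in M$ with $d(x,y)<\de$, both $x$ and $y$ lie in the common ball $B=B(x,2\de)$ (or one takes $B$ a ball of radius $\de$ around the midpoint of a short geodesic, adjusting constants), and the bijection between preimages of $x$ and preimages of $y$ supplied by Lemma~\ref{l.preimagesmatching} (or simply by the component structure of $f^{-1}(B)$) pairs each preimage of $x$ with the preimage of $y$ lying in the same connected component of $f^{-1}(B)$. A pair can meet $\cA$ only if its component meets $\cA$, and we have just bounded the number of such components by $q$; hence at most $q$ paired preimages belong to $\cA$. The main obstacle — really the only delicate point — is ensuring that a connected component of $f^{-1}(B)$ is genuinely confined to a single element of the partition $\cP$, which is handled by choosing $\de$ small relative to both the Lebesgue number of $\cU$ and the uniform contraction bound on inverse branches.
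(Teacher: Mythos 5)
There is a genuine gap. Your proof hinges on the claim that each connected component of $f^{-1}(B)$ is contained in a single element of the partition $\cP$ from Lemma~\ref{l.partitionP}, and that distinct components lie in distinct $P_i$'s; but the $P_i$ for $i\ge 2$ are constructed as set differences $U_i\setminus(P_1\cup\cdots\cup P_{i-1})$, hence are not open, and a small connected component can perfectly well straddle the boundary between two of them. Your Lebesgue-number argument only places a small component inside some open $U_j\in\cU$, which does not put it inside the smaller, non-open $P_j$. The fallback you offer --- that injectivity of $f|_{P_i}$ forces $f^{-1}(B)$ to meet each $P_i$ in at most one component --- also fails: injectivity only says that $f(C_j\cap P_i)$ and $f(C_{j'}\cap P_i)$ are disjoint subsets of $B$. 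For instance, if $P_i=(a,a+\tfrac13)$ for $f(x)=3x$ on the circle and $B$ is a small ball around $3a$, then the two components of $f^{-1}(B)$ near $a$ and near $a+\tfrac13$ both meet $P_i$.

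The paper's proof avoids $\cP$ entirely and is pointwise: for each fixed $x$, since at most $q$ preimages of $x$ lie in $\cA$, there is $\de_x>0$ so that only the $\le q$ components of $f^{-1}(B(x,\de_x))$ around those preimages can intersect $\cA$; compactness supplies a finite subcover $\{B(x_i,\de_i)\}$, and its Lebesgue number $2\de$ gives a uniform radius, since any ball $B$ of radius $\de$ sits inside some $B(x_i,\de_i)$ and the components of $f^{-1}(B)$ are nested in those of $f^{-1}(B(x_i,\de_i))$. If you want to keep a single global Lebesgue-number argument in the spirit of yours, it should be applied to the open cover $\{U_1,\dots,U_q,\,M\setminus\overline{\cA}\}$ rather than to $\cP$, using that $f$ is injective on each open $U_i$ (so each $U_i$ with $i\le q$ contains at most one small component) --- but this is a materially different argument from the one you wrote.
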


\begin{proof}
Assume that $\de_0>0$ is small so that every inverse branch is well defined in a ball of radius $\de_0$.
Since $\sharp (\{f^{-1}(x)\}\cap \cA)\leq q$ then for every $x\in M$ there exists $0<\de_x<\de_0$ so that 
$f^{-1}(B(x,\de_x))$ has at most $q$ connected components that intersect $\cA$. By compactness of $M$ 
pick a finite subcover $\cB=\{B(x_i,\de_i)\}_{i\in I}$, set $2\de$ to be Lebesgue number of $\cB$ and 
assume, without loss of generality, that $\de<\de_0$.
Therefore, by construction, given any ball $B$ of radius $\de$ it follows that $B \subset B(x_i,\de_i)$ for 
some ${i\in I}$. In consequence, the number of connected components satisfy
$$
\sharp c.c. (f^{-1}(B)\cap \cA)
	\leq \sharp c.c. (f^{-1}(B(x_i,\de_i))\cap \cA)
	\leq q
$$ 
This finishes the proof of the lemma.
\end{proof}

%%%%%%%%%%%%%%%%%%%%%%%%%%%%%%%%%%
\section{Ruelle-Perron-Frobenius operator in $C^\alpha(M,\mathbb R)$:
		 \\Spectral gap and statistical consequences}\label{s.gapHolder}

In this section we prove that the action of the transfer operator in the space of H\"older continuous
observables has the spectral gap property. In consequence, we provide an alternative 
proof for the existence and uniqueness of equilibrium states as well as 
further statistical properties: exponential decay of correlations and central limit theorem.  
We also get that the densities of the unique equilibrium state with respect to the conformal measures
are H\"older and vary continuously in a uniform way with the dynamical system.  Finallly, the topological
pressure also varies continuously in this non-uniformly expanding setting.

%%%%%%%%%%%%%%%%%%%%%%%%%%%%%%%%%%
\subsection{Invariant cones for the transfer operator in $C^\alpha(M,\mathbb R)$}\label{s.s.cones}

To prove that the Ruelle-Perron-Frobenius operator has a spectral gap in the space of H\"older
continuous observables one first introduce some notations. Recall that the H\"older constant of 
$\vr\in C^\alpha(M,\mathbb R)$ is
$$
|\vr|_{\alpha}=\sup\limits_{x\neq y}\frac{|\vr(x)-\vr(y)|}{d(x,y)^{\alpha}}
$$
and set $|\vr|_{\alpha,\delta}$ as the least constant $C>0$ such that $|\vr(x)-\vr(y)|\le C d(x,y)^\alpha$ for  all
points $x,y$ such that $d(x,y)<\delta$. Now, consider the cone of locally H\"older continuous observables
$$
\Lambda_{\kappa,\delta}
	=\Big\{\vr\in C^0(M,\mathbb R) : \vr>0 \text{ and } \frac{|\vr|_{\alpha,\delta}}{\inf \vr} \leq \kappa \Big\}.
$$

Throughout, let $\de>0$ be fixed and given by Lemma~\ref{l.aux}. Fix also $m$ given by 
Lemma~\ref{leholoc} associated to balls of radius $\de$.
We are now in a position to state the precise condition on the constants $\vep_\phi$ and
$\vep'_\phi$ on (P) and (P') respectively. Then taking into account \eqref{eq. relation potential} we assume:
\begin{equation}\label{eq.vep}
e^{\vep_\phi}\cdot\left(\frac{(\deg(f)-q) \sigma^{-\alpha} + q L^\alpha [1+(L-1)^\alpha] }{\deg(f)} \right)
	+ \vep_\phi 2m L^\al \diam(M)^\al
	<1
\end{equation}
and
\begin{equation}\label{eq.vepp}
[1+\vep'_\phi ] \cdot e^{\vep_\phi} \cdot \left(\frac{(\deg(f)-q) \sigma^{-\alpha} + q L^\alpha [1+(L-1)^\alpha] }{\deg(f)} \right)
	<1
\end{equation}
Notice that having \eqref{eq. relation potential} it is possible to consider $\vep'_\phi$ satisfying the later 
condition. Our main result in this section is as follows.

\begin{theorem}\label{t.cone.invariance}
Assume that $f$ satisfies (H1), (H2) and that $\phi$ satisfies (P). Then
there exists $\delta>0$ and $0< \hat \la< 1$ such that $\cL_\phi(\Lambda_{\kappa,\delta}) \subset \Lambda_{\hat\la \kappa,\delta} $ for every large positive constant $\kappa$.
\end{theorem}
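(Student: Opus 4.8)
The plan is to estimate separately the two quantities that control membership in the cone: $\inf \cL_\phi\vr$ from below, and the local H\"older constant $|\cL_\phi\vr|_{\alpha,\delta}$ from above, and then show the ratio is bounded by $\hat\la\kappa$ for a suitable $\hat\la<1$ once $\kappa$ is large. Fix $\delta>0$ given by Lemma~\ref{l.aux} and $m$ from Lemma~\ref{leholoc} for balls of radius $\delta$. Write $\cL_\phi\vr(x)=\sum_{f(y)=x} e^{\phi(y)}\vr(y)$; since every point has $\deg(f)$ preimages, at most $q$ of which lie in $\cA$, and since the inverse branches contract by $L(y)\le L$ inside $\cA$ and by $L(y)<\sigma^{-1}$ outside, the preimages of two nearby points $x,z$ with $d(x,z)<\delta$ are paired with $d(y_i,w_i)\le L^\alpha d(x,z)^\alpha$ or $\le\sigma^{-\alpha}d(x,z)^\alpha$ accordingly.

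First I would bound $|\cL_\phi\vr|_{\alpha,\delta}$. For $d(x,z)<\delta$ write
\[
|\cL_\phi\vr(x)-\cL_\phi\vr(z)|
\le \sum_{i}\big| e^{\phi(y_i)}\vr(y_i)-e^{\phi(w_i)}\vr(w_i)\big|
\le \sum_i e^{\phi(y_i)}|\vr(y_i)-\vr(w_i)| + \sum_i \vr(w_i)\,|e^{\phi(y_i)}-e^{\phi(w_i)}|.
\]
The first sum is controlled using $|\vr|_{\alpha,\delta}\le\kappa\inf\vr$ together with the branchwise contraction: the $q$ branches in $\cA$ contribute at most $q L^\alpha[1+(L-1)^\alpha]$ (the extra $(L-1)^\alpha$ term coming from applying Lemma~\ref{leholoc} since paired preimages may escape the ball of radius $\delta$ after dilation), and the remaining $\deg(f)-q$ branches contribute at most $(\deg(f)-q)\sigma^{-\alpha}$; altogether this is bounded by $e^{\vep_\phi}\,e^{\inf\phi}\,\kappa\,\inf\vr\cdot\big(\tfrac{(\deg(f)-q)\sigma^{-\alpha}+qL^\alpha[1+(L-1)^\alpha]}{\deg(f)}\big)\cdot\deg(f)\,d(x,z)^\alpha$, using $\sup\phi-\inf\phi<\vep_\phi$. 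For the second sum I would use (P)'s bound $|e^\phi|_\alpha<\vep_\phi e^{\inf\phi}$ together with $d(y_i,w_i)\le (2m L^\alpha\diam(M)^\alpha)^{1/\alpha}\,d(x,z)$ coming from Lemma~\ref{lehoglob}/Lemma~\ref{leholoc}, and $\vr(w_i)\le\sup\vr$; combined with the global comparison $\sup\vr\le (1+\kappa\diam(M)^\alpha)\inf\vr$ this yields a term of the form $\vep_\phi\,2mL^\alpha\diam(M)^\alpha\cdot\kappa\inf\vr\cdot e^{\inf\phi}\deg(f)\,d(x,z)^\alpha$ (modulo absorbing lower-order terms into the large-$\kappa$ regime).

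Next I would bound $\inf\cL_\phi\vr$ from below: since at least one branch, say $y_{i_0}$, lies outside $\cA$, $\cL_\phi\vr(x)\ge e^{\phi(y_{i_0})}\vr(y_{i_0})\ge e^{\inf\phi}\inf\vr$, and in fact more carefully $\cL_\phi\vr(x)\ge \deg(f)e^{\inf\phi}\inf\vr\cdot\frac{1}{\deg(f)}$ — the point is that $\inf\cL_\phi\vr\ge e^{\inf\phi}\inf\vr$. Dividing, $\frac{|\cL_\phi\vr|_{\alpha,\delta}}{\inf\cL_\phi\vr}\le \kappa\cdot\big[e^{\vep_\phi}\big(\tfrac{(\deg(f)-q)\sigma^{-\alpha}+qL^\alpha[1+(L-1)^\alpha]}{\deg(f)}\big)+\vep_\phi 2mL^\alpha\diam(M)^\alpha\big]+C$ for a constant $C$ independent of $\kappa$ arising from the cross terms. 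By \eqref{eq.vep} the bracket equals some $\theta<1$, so choosing $\kappa$ large enough that $\kappa\theta+C\le\hat\la\kappa$ with $\theta<\hat\la<1$ finishes the proof. The main obstacle is the bookkeeping in the first step: paired preimages of points at distance $<\delta$ need not stay within distance $\delta$ after a branch of $\cA$ dilates by $L>1$, so one must invoke Lemma~\ref{leholoc} to upgrade the local H\"older control to balls of radius up to $(1+(L-1))\delta$, which is exactly the source of the $[1+(L-1)^\alpha]$ factor in \eqref{eq. relation potential} and \eqref{eq.vep}; getting these constants to line up so that the bracket is genuinely $<1$ is the delicate part, and it is precisely what hypothesis (H1) ($L$ close to $1$) and (P) ($\vep_\phi$ small) are there to guarantee.
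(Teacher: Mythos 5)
Your proposal follows the same route as the paper: split the increment $|\cL_\phi\vr(x)-\cL_\phi\vr(w)|$ into the two sums involving $e^{\phi(x_j)}|\vr(x_j)-\vr(w_j)|$ and $\vr(w_j)|e^{\phi(x_j)}-e^{\phi(w_j)}|$, control the first with Lemma~\ref{leholoc} (source of the $[1+(L-1)^\alpha]$ correction) together with the $q$-versus-$\deg(f)-q$ branch dichotomy from Lemma~\ref{l.aux}, control the second via $|e^\phi|_\alpha<\vep_\phi e^{\inf\phi}$, and divide by $\inf\cL_\phi\vr\ge\deg(f)e^{\inf\phi}\inf\vr$ so that \eqref{eq.vep} gives the contraction of the cone parameter.

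One misattribution worth correcting: you write $d(y_i,w_i)\le(2mL^\alpha\diam(M)^\alpha)^{1/\alpha}d(x,z)$ and ascribe the $2m\diam(M)^\alpha$ factor to the preimage-distance bound via Lemma~\ref{lehoglob}/\ref{leholoc}. That is not where it comes from; the paired preimages only satisfy $d(y_i,w_i)\le L\,d(x,z)$ (or $\le\sigma^{-1}d(x,z)$), and Lemma~\ref{lehoglob} instead enters through the comparison $\sup\vr\le\inf\vr+m|\vr|_{\alpha,\delta}\diam(M)^\alpha\le(1+m\kappa\diam(M)^\alpha)\inf\vr$, which when combined with the $L^\alpha$ from the distance bound and absorbing $1$ into $m\kappa\diam(M)^\alpha$ for large $\kappa$ yields the $2mL^\alpha\diam(M)^\alpha\kappa$ term. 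Your final bracket is nevertheless identical to the paper's, so this is a bookkeeping slip rather than a gap.
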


\begin{proof}
Take $\kappa>0$ and let  $\vr\in \Lambda_{ \kappa,\delta}$ be given. Moreover, given $x\in M$ we consider the
set $(x_j)_{j=1\dots\deg(f)}$ of the preimages by $f$ of the point $x$, that is $f(x_j)=x$, and let $K=|\varphi|_{\al,\de}$ be the $\alpha$-H\"older
constant of $\vr$ on balls of radius $\delta$. We will prove that there exists $0<\hat\lambda<1$ such that 
$\cL_\phi \vr \in \Lambda_{\hat\la \kappa,\delta}$ provided that $\kappa$ is large enough. Indeed, if $d(x,w)<\delta$ then
\begin{align}
\frac{| \cL_\phi \vr(x)- \cL_\phi \vr(w)|}{\inf_{z \in M}\{\cL_\phi \vr(z)\}d(x, w)^\alpha }
	& \leq \frac{\sum_{j= 1}^{\deg(f)} |\vr(x_j) e^{\phi(x_j)}- \vr(w_j) e^{\phi(w_j)}|}{{\deg(f)} \cdot e^{\inf\phi} \inf \vr \cdot d(x, 	w)^\alpha} \nonumber \\
	& \leq \frac{\sum_{j= 1}^{\deg(f)} |e^{\phi(x_j)}(\vr(x_j) - \vr(w_j))|}{{\deg(f)} \cdot e^{\inf\phi} \inf \vr \cdot d(x, w)^\alpha}
	\label{eq.spec1}\\
	& +  \frac{\sum_{j= 1}^{\deg(f)} | \vr(w_j) (e^{\phi(x_j)}-  e^{\phi(w_j)})|}{{\deg(f)} \cdot e^{\inf\phi} \inf \vr \cdot
	d(x, w)^\alpha} \label{eq.spec2}
\end{align}
We subdivide the sum in \eqref{eq.spec1} according to the possible backward contraction of the inverse branches of
$f$. Using Lemma~\ref{leholoc} we obtain that $\varphi$ is $\tilde K$-H\"older on balls of radius $L\delta$ with
$\tilde K=K (1+(L-1)^\alpha)$. Since $K\le \kappa \inf g$ and $\sup\phi -\inf\phi<\vep_\phi$ we get that 
\eqref{eq.spec1} is bounded from above by
$$
e^{\vep_\phi} \;  \frac{(\deg(f)-q) \sigma^{-\alpha}+ q L^\alpha [1+(L-1)^\alpha]}{{\deg(f)}} \kappa.
$$
For estimating \eqref{eq.spec2} we first note that since $\vr$ is H\"older continuous then $\sup \vr \le \inf \vr + m |\vr|_{\al,\de}\diam(M)^\alpha$. Therefore, using $\vr\in\Lambda_{\kappa,\delta}$ we get 
\begin{align*}
\eqref{eq.spec2}
	& \leq \frac{\sup \vr \cdot |e^\phi|_\alpha \cdot L^\al}{\inf \vr\; \cdot e^{\inf\phi}}
	 \leq \frac{\inf \vr + m \|\vr\|_{\al,\de}\diam(M)^\alpha}{\inf \vr}  \frac{|e^\phi|_\alpha}{e^{\inf\phi}} L^\al \\
	 &  \leq \frac{|e^\phi|_\alpha}{e^{\inf\phi}} L^\al \left[1 + m \kappa \diam(M)^\alpha \right]
	 \leq \frac{|e^\phi|_\alpha}{e^{\inf\phi}} 2 m L^\al \kappa \diam(M)^\alpha
\end{align*}
Using (P) we have $|e^\phi|_\alpha<\vep_\phi e^{\inf\phi}$ and our previous choice of $\vep_\phi$ yields that
 $\|\cL_\phi \vr\|_{\al,\de} \leq \hat \la \kappa \;\inf (\cL_\phi \vr)$. This completes the proof of the theorem.
\end{proof}

Observe that assumption (P) can be rewritten as $\sup(\phi)-\inf(\phi)<\vep_\phi$ 
and $e^\phi \in \Lambda_{\vep_\phi}$. To prove that the cone has finite diameter in  $\Lambda_{\kappa,\delta}$ 
we compute an explicit expression for the projective metric. 

\begin{lemma}\label{l.cone-metric}
The $\Lambda_{\kappa,\de}$-cone metric $\Theta_\kappa$ is given by
$\Theta_\kappa(\vr, \psi)=\log\frac{B_\kappa(\vr, \psi)}{A_\kappa(\vr,\psi)}$, where
$$
A_\kappa(\vr, \psi)=\inf\limits_{0<d(x,y)<\de, \; z\in M }
\frac{\kappa|x-y|^{\al}\psi(z)-\left(\psi(x)-\psi(y)\right)}
{\kappa|x-y|^{\al}\vr(z)-\left(\vr(x)-\vr(y)\right)} ,
$$
and
$$
B_\kappa(\vr,\psi)=\sup\limits_{0<d(x,y)<\de, \; z\in M }
\frac{\kappa|x-y|^{\al}\psi(z)-\left(\psi(x)-\psi(y)\right)}
{\kappa|x-y|^{\al}\vr(z)-\left(\vr(x)-\vr(y)\right)} .
$$
\end{lemma}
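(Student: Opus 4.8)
The plan is to derive the stated formula directly from the definitions of the projective metric associated to the cone $\Lambda_{\kappa,\delta}$, following the standard recipe of Birkhoff's theory as recalled in Section~\ref{s.cones}. Recall that for $\vr,\psi\in\Lambda_{\kappa,\delta}$ we have $\Theta_\kappa(\vr,\psi)=\log(B_\kappa(\vr,\psi)/A_\kappa(\vr,\psi))$ where $A_\kappa(\vr,\psi)=\sup\{t>0: t\vr\preceq\psi\}$ and $B_\kappa(\vr,\psi)=\inf\{t>0:\psi\preceq t\vr\}$, and the ordering $\preceq$ is the one induced by $\Lambda_{\kappa,\delta}$. So the first step is to unwind what $t\vr\preceq\psi$ means: it says $\psi-t\vr\in\Lambda_{\kappa,\delta}\cup\{0\}$, i.e. $\psi-t\vr\ge 0$ everywhere and $|\psi-t\vr|_{\al,\de}\le\kappa\inf(\psi-t\vr)$. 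The key observation is that the defining inequality of the cone, $|\eta|_{\al,\de}\le\kappa\inf\eta$, can be rewritten pointwise as: for all $x,y$ with $0<d(x,y)<\de$ and all $z\in M$, $\kappa\,d(x,y)^\al\,\eta(z)-(\eta(x)-\eta(y))\ge 0$ (using that $\inf\eta$ is approached and that $\eta(x)-\eta(y)$ may be taken with either sign by swapping $x,y$). In particular this single family of linear inequalities simultaneously encodes positivity of $\eta$ (let $d(x,y)\to 0$, or note $\inf\eta\ge 0$ is forced) and the local H\"older bound.

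Granting that reformulation, the second step is purely algebraic: applying it to $\eta=\psi-t\vr$, the condition $t\vr\preceq\psi$ becomes
$$
\kappa\,d(x,y)^\al(\psi(z)-t\vr(z))-\big((\psi(x)-\psi(y))-t(\vr(x)-\vr(y))\big)\ge 0
$$
for all admissible $x,y,z$, i.e.
$$
t\,\big(\kappa\,d(x,y)^\al\vr(z)-(\vr(x)-\vr(y))\big)\le \kappa\,d(x,y)^\al\psi(z)-(\psi(x)-\psi(y)).
$$
Since the bracket multiplying $t$ on the left is precisely the quantity that is $\ge 0$ because $\vr\in\Lambda_{\kappa,\delta}$ (and is $>0$ away from the degenerate locus, which one must check does not cause division issues — this is where a small amount of care is needed), one may divide and conclude that $t\vr\preceq\psi$ holds iff $t\le A_\kappa(\vr,\psi)$ with $A_\kappa$ as in the statement, namely the infimum over all admissible $(x,y,z)$ of the displayed ratio. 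Taking the supremum of such $t$ gives $A_\kappa(\vr,\psi)$; the computation for $B_\kappa(\vr,\psi)$ is entirely symmetric, reversing the inequality and taking the supremum ratio instead.

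The third and final step is to address the genuinely delicate point, which I expect to be the main obstacle: justifying that the set $\{t:t\vr\preceq\psi\}$ is exactly the interval $(0,A_\kappa]$ (or $(0,A_\kappa)$) and that the $\sup$/$\inf$ over the open condition $0<d(x,y)<\de$ and the noncompact-looking quantifier over $z\in M$ behave well — in particular that the infimum defining $A_\kappa$ is attained or at least that the reformulated cone membership is equivalent to the non-strict inequality for all admissible triples. One must verify: (i) $\Lambda_{\kappa,\delta}$ is closed, so that $\eta\in\overline{\Lambda_{\kappa,\delta}}$ iff the family of inequalities $\kappa\,d(x,y)^\al\eta(z)\ge\eta(x)-\eta(y)$ holds non-strictly for all admissible $(x,y,z)$ — this uses continuity of $\eta$ and compactness of $M$ to pass to closure; (ii) the denominator $\kappa\,d(x,y)^\al\vr(z)-(\vr(x)-\vr(y))$ is strictly positive for $\vr$ in the interior, or else the ratio is interpreted as $+\infty$, so that the $\inf$ and $\sup$ are well-defined in $[0,+\infty]$. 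Once these technical points are settled, matching the derived expressions with the formulas for $A_\kappa(\vr,\psi)$ and $B_\kappa(\vr,\psi)$ in the statement is immediate, and $\Theta_\kappa(\vr,\psi)=\log(B_\kappa/A_\kappa)$ by definition. I would structure the write-up as: (1) reformulate cone membership as the linear inequality family; (2) substitute $\eta=\psi-t\vr$ and solve for $t$; (3) remark on attainment/finiteness, invoking closedness of the cone and compactness of $M$.
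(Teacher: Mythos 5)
Your argument is correct and, at its core, performs the same manipulation as the paper: unwind $t\vr\preceq\psi$, substitute $\eta=\psi-t\vr$, and solve the resulting linear inequality in $t$ to identify the extremal coefficients. The genuine point of divergence is in how the positivity constraint $\psi-t\vr\ge 0$ is handled. The paper keeps the two requirements of cone membership separate and obtains the upper bound $A\leq\min\big\{\inf_{z}\psi(z)/\vr(z),\ \text{ratio infimum}\big\}$, then shows that the ratio infimum is the binding one by evaluating the ratio at triples $(x,x_0,x_0)$ with $x$ near a point $x_0$ minimizing $\psi/\vr$. Your reformulation bypasses this comparison: you show that the single family of inequalities $\kappa\,d(x,y)^\al\eta(z)\ge\eta(x)-\eta(y)$ already encodes the whole membership condition, because taking $z$ at the minimum of $\eta$ (and swapping $x,y$) recovers $|\eta|_{\al,\de}\le\kappa\inf\eta$, and this in turn forces $\inf\eta\ge0$ since $|\eta|_{\al,\de}\ge0$ cannot be $\le$ a negative number. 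That is a cleaner route which avoids the paper's slightly delicate limiting step, and is worth preferring in a write-up.

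One small correction to the justification: the parenthetical ``let $d(x,y)\to 0$'' does not yield positivity, since numerator and denominator both degenerate to $0\ge 0$ in that limit; it is the second justification (that $\inf\eta\ge 0$ is forced by the H\"older bound) that actually does the work, and that is the one you should state. You should also make explicit the easy fact that $\overline{\Lambda_{\kappa,\delta}}=\Lambda_{\kappa,\delta}\cup\{0\}$ (if $\inf\eta=0$ then $|\eta|_{\al,\de}=0$ and $\eta\equiv 0$ on the connected $M$), since the partial order $\preceq$ uses the closed cone. With those two points tidied the proof is complete.
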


\begin{proof}
By definition, $A \vr\preceq \psi$ if and only if $\psi(x)-A \vr(x)\geq 0$ for every $x\in M$ and 
$\| \psi-A \vr \|_{\al,\de}\leq \kappa\inf (\psi-A \vr)$. In particular one gets
\begin{equation*}\label{eq.metric1}
A\leq \min\Big\{ 
	\inf\limits_{x} \frac{\psi(x)}{\vr(x)}, \ \
	\inf\limits_{0<d(x,y)<\de, \; z\in M } \frac{\kappa |x-y|^{\al}\psi(z)-\left(\psi(x)-\psi(y)\right)}{\kappa |x-y|^{\al}\vr(z)-\left(\vr(x)-\vr(y)\right)}
	\Big\}.
\end{equation*}
We will prove that minimum in the right hand side is always attained by the second term. 
Pick $x_0\in M$ such that $\inf\limits_x \frac{\psi(x)}{\vr(x)} =\frac{\psi(x_0)}{\vr(x_0)}$. 
Then it is immediate that
$$
\lim_{x \to x_0} \frac{\kappa |x- x_0|^{\al}\psi(x_0)-\left(\psi(x)-\psi(x_0)\right)} {\kappa |x- x_0|^{\al}\vr(x_0)-\left(\vr(x)-\vr(x_0)\right)}
	\leq \frac{\psi(x_0)}{\varphi(x_0)},
$$
which guarantees that
$$
A_\kappa(\vr, \psi)
	= \inf\limits_{0<d(x,y)<\de, \; z\in M}
	\frac{\kappa |x-y|^{\al}\psi(z)-\left(\psi(x)-\psi(y)\right)}{\kappa |x-y|^{\al}\vr(z)-\left(\vr(x)-\vr(y)\right)}.$$
Similar computations lead to the expression for $B_\kappa(\vr,\psi)$.
%%%%%%%%%%%%%%%%%%%%%%%%%%%%%%%%%%%%%%%%%%%%%%%%%%%%

%%%%%%%%%%%%%%%%%%%%%%%%%%%%%%%%%%%%%%%%%%%%%%%%%%%%
\end{proof}

\begin{proposition} \label{propdiam}
Given $0< \hat\la< 1$,  the cone ${\Lambda}_{\hat\la \kappa,\delta}$ has finite ${\Lambda}_{\kappa,\delta}$-diameter.
\end{proposition}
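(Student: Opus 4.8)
The plan is to work directly with the explicit description of the projective metric from Lemma~\ref{l.cone-metric}, using that $\Theta_\kappa$ is invariant under multiplying each argument by a positive scalar. So I would fix $\vr,\psi\in\Lambda_{\hat\la\kappa,\delta}$ and normalize $\inf\vr=\inf\psi=1$. Since $\vr,\psi$ lie in the tighter cone one has $|\vr|_{\al,\de},|\psi|_{\al,\de}\le\hat\la\kappa$, and by the elementary bound already used in the proof of Theorem~\ref{t.cone.invariance} (which comes from Lemma~\ref{lehoglob}), $\sup\vr\le 1+m\hat\la\kappa\diam(M)^\al$ and likewise for $\psi$.

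The core of the argument is a uniform two-sided estimate for
$$
R(x,y,z)=\frac{\kappa\,d(x,y)^{\al}\psi(z)-\big(\psi(x)-\psi(y)\big)}{\kappa\,d(x,y)^{\al}\vr(z)-\big(\vr(x)-\vr(y)\big)},
\qquad x\neq y,\ d(x,y)<\de,\ z\in M,
$$
whose infimum and supremum over $(x,y,z)$ are, by Lemma~\ref{l.cone-metric}, precisely $A_\kappa(\vr,\psi)$ and $B_\kappa(\vr,\psi)$. The decisive point is that the denominator is $\ge\kappa\,d(x,y)^\al\inf\vr-|\vr|_{\al,\de}d(x,y)^\al\ge\kappa(1-\hat\la)\,d(x,y)^\al>0$ — this is exactly where the hypothesis $\hat\la<1$ enters. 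The numerator is handled the same way: it is at most $\kappa\,d(x,y)^\al\sup\psi+|\psi|_{\al,\de}d(x,y)^\al\le\kappa\,d(x,y)^\al\big(1+\hat\la+m\hat\la\kappa\diam(M)^\al\big)$ and at least $\kappa(1-\hat\la)\,d(x,y)^\al$. Dividing, the common factor $\kappa\,d(x,y)^\al$ cancels and one obtains, independently of $\vr,\psi$,
$$
\frac{1-\hat\la}{1+\hat\la+m\hat\la\kappa\diam(M)^\al}\;\le\;R(x,y,z)\;\le\;\frac{1+\hat\la+m\hat\la\kappa\diam(M)^\al}{1-\hat\la}.
$$

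Passing to the infimum and supremum over $(x,y,z)$ bounds $A_\kappa(\vr,\psi)$ from below and $B_\kappa(\vr,\psi)$ from above by these same quantities, whence
$$
\Theta_\kappa(\vr,\psi)=\log\frac{B_\kappa(\vr,\psi)}{A_\kappa(\vr,\psi)}\;\le\;2\log\frac{1+\hat\la+m\hat\la\kappa\diam(M)^\al}{1-\hat\la}=:\De<\infty,
$$
so $\diam_{\Theta_\kappa}(\Lambda_{\hat\la\kappa,\delta})\le\De$. I expect the only genuine subtlety to be the strict positivity of the denominator of $R$: it is the factor-$\hat\la<1$ gain in the H\"older-constant-to-infimum ratio that keeps $\kappa\,d(x,y)^\al\vr(z)-(\vr(x)-\vr(y))$ bounded away from zero relative to $d(x,y)^\al$, and without that gain the $\Lambda_{\kappa,\delta}$-diameter would be infinite. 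The rest is a routine manipulation of the formula in Lemma~\ref{l.cone-metric}, using only that elements of $\Lambda_{\hat\la\kappa,\delta}$ are pinched above and below in terms of $\hat\la$, $\kappa$, $m$ and $\diam(M)$.
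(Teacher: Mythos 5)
Your proof is correct and follows essentially the same route as the paper: both rely on the explicit formula for $\Theta_\kappa$ from Lemma~\ref{l.cone-metric}, bound the numerator and denominator of the ratio by $\kappa d(x,y)^\alpha$ times quantities controlled by $\sup\vr$, $\inf\vr$, $|\vr|_{\al,\de}\le\hat\la\kappa\inf\vr$ and the pinching $\sup\vr\le(1+m\hat\la\kappa\diam(M)^\al)\inf\vr$, and use $\hat\la<1$ to keep the denominator bounded away from zero. The normalization $\inf\vr=\inf\psi=1$ is a harmless simplification, and your final constant $2\log\frac{1+\hat\la+m\hat\la\kappa\diam(M)^\al}{1-\hat\la}$ agrees with the paper's bound $2\log\frac{1+\hat\la}{1-\hat\la}+2\log(1+m\hat\la\kappa\diam(M)^\al)$ up to elementary rearrangement.
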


\begin{proof}
For all $\vr \in {\Lambda}_{\hat \lambda\kappa,\de}$ by definition we have
$|\vr|_{\alpha,\de} \leq \hat\la \kappa \inf \varphi$ and, consequently,
$\sup \varphi \leq [1+ m \hat\lambda \kappa (\diam M)^\alpha] \inf \varphi$.
So, using the previous expression for the projective metric, given $\vr, \psi \in {\Lambda}_{\hat\la \kappa,\delta}$ one 
can easily check that
\begin{align*}
\Theta_\kappa( \vr, \psi)
	& \leq \log \Big(\frac{\kappa \cdot \sup \vr + \hat\la \kappa \inf \vr}{\kappa \cdot \inf \vr - \hat\la \kappa \inf \vr } \cdot \frac{\kappa \cdot \sup \psi +\hat\la \kappa \inf \psi}{\kappa \cdot \inf \psi - \hat\la \kappa \inf \psi }\Big) \\
	& \leq \log \Big(\frac{(\kappa (1+ m \hat\lambda \kappa (\diam M)^\alpha)(1 + \hat\la ) \inf \vr}{\kappa \cdot (1- \hat\la)  \cdot \inf \vr } \Big) \\
	& +  \log \Big(\frac{(\kappa (1+ m \hat\lambda \kappa (\diam M)^\alpha)(1 + \hat\la ) \inf \psi}{\kappa \cdot (1- \hat\la)  \cdot \inf \psi} \Big) \\
	& \leq 2\log\left(\frac{1+ \hat\la}{1- \hat\la}\right)+ 2\log\big(1+ m c \diam(M)^\alpha \big)
\end{align*}
for some positive constant $c$. This implies the finite $\Theta_\kappa$-diameter of ${\Lambda}_{\hat\la\kappa,\de}$ and finishes the proof of the lemma.
\end{proof}

%%%%%%%%%%%%%%%%%%%%%%%%%%%%%%%%%%%%%%%%
\subsection{Consequences of the spectral gap in $C^\al(M,\mathbb R)$}\label{s.consequences}

Now we shall deduce the existence of equilibrium states and some of their ergodic properties.

%%%%%%%%%%%%%%%%%%%%%%%%%%%%%%%%%%%%%%%%%%%%%%%%%%%%
\subsubsection{Existence of equilibrium states}\label{s.exist}

Using the spectral gap property in the space of H\"older continuous
observables we get the existence of a unique H\"older continuous invariant density $h$.

\begin{proposition}\label{p.densidade}
There exists a unique density $h\in C^\alpha(M,\mathbb R)$ such that $\cL_\phi h=\lambda h$.
In particular, $\mu=h \nu$ an equilibrium state for $f$ with respect to $\phi$. 
Finally, the density $d\mu/d\nu$ is bounded away from zero and infinity and H\"older continuous.
\end{proposition}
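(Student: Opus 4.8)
The plan is to obtain $h$ as the limit, in the topology of uniform convergence, of the normalized iterates $h_n:=\hat\cL^n\um$, where $\hat\cL:=\lambda^{-1}\cL_\phi$ and $\um\equiv1$, exploiting the projective contraction furnished by Theorem~\ref{t.cone.invariance}, Proposition~\ref{propdiam} and Birkhoff's Theorem~\ref{t.Birkhoff}. First I would fix $\delta>0$ as in Lemma~\ref{l.aux} and a constant $\kappa$ large enough for Theorem~\ref{t.cone.invariance} to apply; this provides $\hat\la<1$ (independent of $\kappa$) with $\hat\cL(\Lambda_{\kappa,\delta})\subset\Lambda_{\hat\la\kappa,\delta}\subset\Lambda_{\kappa,\delta}$. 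By Proposition~\ref{propdiam} the set $\hat\cL(\Lambda_{\kappa,\delta})$ has finite $\Theta_\kappa$-diameter $\Delta$, so Theorem~\ref{t.Birkhoff} shows that $\hat\cL$ contracts $\Theta_\kappa$ by the factor $\Xi:=1-e^{-\Delta}<1$. Since $\cL_\phi^*\nu=\lambda\nu$ with $\nu$ a probability measure (Proposition~\ref{p.conformal.measure}), the iterates satisfy $\int h_n\,d\nu=\lambda^{-n}\int\um\,d((\cL_\phi^*)^n\nu)=1$ for all $n$, and $h_n\in\Lambda_{\hat\la\kappa,\delta}$ for all $n\ge1$ (as $\um\in\Lambda_{\kappa,\delta}$); hence $\Theta_\kappa(h_n,h_{n+p})\le\Xi^{\,n-1}\Theta_\kappa(h_1,h_{1+p})\le\Xi^{\,n-1}\Delta$, so $(h_n)_n$ is $\Theta_\kappa$-Cauchy.

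Next I would upgrade Cauchyness in the projective metric to convergence in $C^0(M,\R)$. This rests on the standard comparison, valid for Birkhoff cones normalized by a positive functional (see \cite{Vi97, Ba00, Li95}), that $\|h_n-h_{n+p}\|_0$ is bounded by $\big(e^{\Theta_\kappa(h_n,h_{n+p})}-1\big)$ times a constant depending only on $\Lambda_{\hat\la\kappa,\delta}$ and $\nu$, since elements of $\Lambda_{\hat\la\kappa,\delta}$ with unit $\nu$-integral have uniformly bounded supremum. Thus $h_n\to h$ uniformly for some $h\ge0$ with $\int h\,d\nu=1$; since $\Lambda_{\hat\la\kappa,\delta}$ is $C^0$-closed up to the origin and $h\not\equiv0$, necessarily $h\in\Lambda_{\hat\la\kappa,\delta}$. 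In particular $h>0$ with $|h|_{\alpha,\delta}\le\hat\la\kappa\inf h$, which forces $\inf h\ge\big(1+m\,\hat\la\kappa\,\diam(M)^\alpha\big)^{-1}>0$ and $\sup h<\infty$, and by Lemma~\ref{lehoglob} $h$ is globally H\"older continuous. Continuity of $\cL_\phi$ on $C^0(M,\R)$ then gives $\hat\cL h=\lim_n\hat\cL h_n=\lim_n h_{n+1}=h$, i.e.\ $\cL_\phi h=\lambda h$; hence $d\mu/d\nu=h$ is H\"older continuous and bounded away from $0$ and $\infty$.

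For uniqueness I would take any $\tilde h\in C^\alpha(M,\R)$ with $\tilde h\ge0$, $\tilde h\not\equiv0$ and $\cL_\phi\tilde h=\lambda\tilde h$. Since $\tilde h=\hat\cL^n\tilde h$ and the iterates $\cL_\phi^n$ spread mass over all of $\supp\nu$ by topological exactness, $\tilde h$ is strictly positive, so $\tilde h\in\Lambda_{\kappa',\delta}$ with $\kappa'=|\tilde h|_\alpha/\inf\tilde h$; replacing $\kappa$ by $\max\{\kappa,\kappa'\}$ (legitimate, as Theorem~\ref{t.cone.invariance} holds for all large $\kappa$) we have $h,\tilde h\in\Lambda_{\kappa,\delta}$ with $\Theta_\kappa(h,\tilde h)<\infty$, both fixed by the $\Theta_\kappa$-contraction $\hat\cL$, so $\Theta_\kappa(h,\tilde h)\le\Xi\,\Theta_\kappa(h,\tilde h)$ forces $\Theta_\kappa(h,\tilde h)=0$ and $\tilde h=c\,h$; normalizing, $c=1$. (Alternatively, $g:=\tilde h/h$ is continuous and, by $\cL_\phi h=\lambda h=\cL_\phi\tilde h$, at each $x$ equals the $\{e^{\phi(y)}h(y)\}_{f(y)=x}$-weighted average of $g$ over $f^{-1}(x)$, so an extremum of $g$ propagates along the dense set $\bigcup_{n}f^{-n}(x_0)$ and $g$ must be constant.) Finally $\mu:=h\nu$ is $f$-invariant: for $g\in C^0(M,\R)$, using $\cL_\phi^*\nu=\lambda\nu$ and $\cL_\phi\big((g\circ f)h\big)=g\cdot\cL_\phi h=\lambda\,g\,h$, one gets $\int(g\circ f)\,d\mu=\int(g\circ f)h\,d\nu=\lambda^{-1}\int\cL_\phi\big((g\circ f)h\big)\,d\nu=\int g\,h\,d\nu=\int g\,d\mu$, and $\mu(M)=1$. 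Moreover $\mu$ is expanding (as $\nu$ is, by Proposition~\ref{p.conformal.measure}, and $h$ is bounded away from $0$ and $\infty$), with Jacobian $J_\mu f=J_\nu f\cdot(h\circ f)/h=\lambda e^{-\phi}(h\circ f)/h$; Rokhlin's formula (valid here by \cite{VV10}) together with the invariance of $\mu$ gives $h_\mu(f)=\int\log J_\mu f\,d\mu=\log\lambda-\int\phi\,d\mu$, so $h_\mu(f)+\int\phi\,d\mu=\log\lambda=\Ptop(f,\phi)$ by Proposition~\ref{p.pressure}, and $\mu$ is an equilibrium state.

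The main obstacle is the passage from $\Theta_\kappa$-Cauchyness to $C^0$-convergence, together with the verification that the limit does not degenerate at the boundary of the cone (i.e.\ stays uniformly bounded away from $0$, so that it defines an honest strictly positive H\"older density); once this standard feature of Birkhoff's projective metrics is in place, the remaining steps merely assemble Theorem~\ref{t.cone.invariance}, Proposition~\ref{propdiam}, Theorem~\ref{t.Birkhoff}, Proposition~\ref{p.conformal.measure}, Proposition~\ref{p.pressure} and Lemma~\ref{lehoglob}.
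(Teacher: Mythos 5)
Your argument is correct and follows the same overall strategy as the paper: Birkhoff cone contraction via Theorem~\ref{t.cone.invariance} and Proposition~\ref{propdiam}, upgrade projective Cauchyness to $C^0$-convergence, extract the limit $h\in\Lambda_{\hat\la\kappa,\delta}$ and identify it as the eigendensity. Where you differ is in two places, both worth noting. For uniqueness, the paper argues only that $\tilde\cL_\phi^n\varphi$ and $\tilde\cL_\phi^n\psi$ converge to the same limit for any two normalized elements $\varphi,\psi\in\Lambda_{\kappa,\delta}$ (via the interleaved Cauchy sequence trick), which establishes uniqueness of the fixed point \emph{inside} the cone; you go further and explicitly reduce a general nonnegative H\"older eigenfunction to the cone setting, first invoking topological exactness to promote nonnegativity to strict positivity, and then enlarging $\kappa$. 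Your treatment is actually more complete in this respect; an alternative that avoids exactness is to consider $\tilde h+\epsilon$, which lies in $\Lambda_{\kappa,\delta}$ for $\kappa$ large, and note that $\tilde\cL_\phi^n(\tilde h+\epsilon)=\tilde h+\epsilon\,\tilde\cL_\phi^n(1)$ must converge both to $\tilde h+\epsilon h$ and to $(1+\epsilon)h$. For the equilibrium-state property, the paper simply invokes the characterization from~\cite{VV10} (Theorem~B and Lemma~6.5 there) that equilibrium states coincide with invariant probabilities absolutely continuous with respect to $\nu$, whereas you compute $h_\mu(f)$ directly from Rokhlin's formula and then apply Proposition~\ref{p.pressure}; the latter is more self-contained but does require Rokhlin's entropy formula in this non-uniformly expanding, non-Markov setting, which is a nontrivial ingredient and should be cited carefully (it is indeed available in~\cite{VV10}, as you note). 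The remaining computations (normalization of the iterates, the bound $\inf h\ge\bigl(1+m\hat\la\kappa\diam(M)^\alpha\bigr)^{-1}$, and the $f$-invariance of $h\nu$ via the transfer identity $\cL_\phi((g\circ f)h)=g\cL_\phi h$ combined with $\cL_\phi^*\nu=\la\nu$) are all correct and match the paper's.
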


\begin{proof}
Consider the normalized operator $\tilde \cL_\phi= \la^{-1}\cL_\phi$, where $\la$ is the spectral radius of $\cL$ and
write ${\Lambda}^+$ for the cone of strictly positive continuous functions on $M$.
Since $\La_{\kappa,\de}\subset \La^+$ then the projective metrics satisfy $\Theta^+(\vr, \psi)\leq \Theta_\kappa(\vr, \psi)$
for any $\vr, \psi \in \La_{\kappa,\de}$, where
$$
\Theta^+(\vr, \psi)= \log \left(\frac{\sup_{x \in M}\{\vr(x)/ \psi(x)\}}{\inf_{y \in M}\{\vr(y)/ \psi(y)\}}\right).
$$
By the previous proposition, $\tilde \cL_\phi(\La_{\kappa,\de})$ has finite diameter in $\La_{\kappa,\de}$ for any sufficiently large $\kappa$. Therefore, as discussed at the end of Subsection~\ref{s.cones}, $\tilde \cL_\phi$ is a contraction in the $\Theta_\kappa$-metric and there exists $0< \tau< 1$ such that for any 
$\vr, \psi \in \La_{\kappa,\de}$ and $n, k\ge 1$
\begin{equation}\label{eq.cont}
\Theta^+(\tilde \cL_\phi^{n+k}(\vr), \tilde \cL_\phi^n(\psi)) 
	\leq \Theta_\kappa(\tilde \cL_\phi^{n+k}(\vr), \tilde \cL_\phi^n(\psi))
	\leq \Delta \tau^n,
\end{equation}
where $\Delta$ is the $\Theta_\kappa$-diameter of the cone $\La_{\hat\la \kappa,\delta}$.
This proves that $(\tilde \cL_\phi^{n} \varphi)_n$ is a Cauchy sequence in the projective metric.
For the reference measure $\nu$ we have that
$
\int \tilde \cL_\phi \vr \, d\nu= \int \vr \,d\nu,
	\; \forall \vr \in C^0(M, \re).
$
Given $\vr \in \La_{\kappa,\de}$ with $\int \vr \,d\nu= 1$ it is clear that $\sup \vr \geq 1$ and $\inf \vr \leq 1$.
Together with the remark that any $\vr \in \La_{\kappa,\de}$ satisfies
$\sup \vr \leq [1+m\kappa \diam(M)^\alpha] \inf \vr$ this shows that
\begin{equation}\label{eq.supinf}
\frac{1}{R_1}
	\le \inf \vr 
	\le 1
	\le \sup \vr 
	\le R_1
\end{equation}
where $R_1=1+m\kappa \diam(M)^\alpha$.
Write $\vr_n=\tilde \cL_\phi^n(\vr)$.  First notice that $(\vr_n)_n$ is an equi-H\"older sequence 
since $|\vr_n(x)-\vr_n(y)| \leq \kappa \inf \vr \;d(x,y)^\al \leq \kappa d(x,y)^\al$ for all $d(x,y)<\de$ and all $n$,
which proves that all $\vr_n$ are $\kappa m$-H\"older continuous.

From the previous discussion we know that $\int \vr_n d\nu = 1$ for every $n$
and, consequently, the sequence $\vr_n$ is uniformly bounded from above and below. 
In fact, observe first that $\int \vr_k d\nu= \int \vr_l d\nu=1$ implies 
$\inf \frac{\vr_k}{\vr_l} \leq 1 \leq \sup \frac{\vr_k}{\vr_l}$. Therefore, from \eqref{eq.cont} we get
$$
e^{-\Delta \tau^n} 
	< \frac{\sup_{x \in M}\{\vr_k(x)/ \vr_l(x)\}}{\inf_{y \in M}\{\vr_k(y)/ \vr_l(y)\}} 
	= e^{\theta^+(\vr_k,\vr_l)}
	< e^{\Delta \tau^n} 
$$
for every $k$ and $l \geq n$. In consequence, 
\begin{equation}\label{eqqq}
e^{-\Delta \tau^n}  
	< \inf \frac{\vr_k}{\vr_l} 
	\leq 1 \leq \sup \frac{\vr_k}{\vr_l}
	< e^{\Delta \tau^n}
\end{equation}
and $(\vr_k)_k$ is a Cauchy sequence in the $C^0$-norm. In fact, 
\begin{equation}\label{eq.supconv}
\sup |\vr_k- \vr_l| 
	\leq \sup \left( |\vr_l| \left|\frac{\vr_k}{\vr_l}- 1\right| \right)
	\leq R_1 (e^{\Delta \tau^n} - 1)
	\leq 3 R_1 \Delta \tau^n
\end{equation}
for every $k, l \geq n$ and any $n \geq -\log (\Delta)/\log(\tau)$.
This yields that $(\vr_k)$ converges uniformly to some function $h$ in $\La_{\kappa,\de}$ satisfying
$\int h d\nu= 1$ and, consequently, $\kappa m$-H\"older continuous. It follows from a standard argument 
that $\mu= \int h \,d\nu$ is an $f$-invariant probability measure. Furthermore, the sequence $(\tilde \cL_\phi^n(\psi))_n$ converges to the same limit for any normalized function $\psi\in \La_{\kappa,\de}$. Indeed, 
if this was not the case then the same arguments used before are enough to conclude that the sequence
$$
\psi_n:= \begin{cases}\begin{matrix}
\vr_n, & \text{ if } n \text{ is odd } \\
\tilde \cL_\phi^n(\psi), & \text{otherwise} \\
\end{matrix}\end{cases}
$$
is Cauchy and, consequently, converges. This shows that the functions
$\tilde \cL_\phi^n(\vr)$ and $\tilde \cL_\phi^n(\psi)$ must have the same limit
and proves the uniqueness of the H\"older invariant density $h\in C^\alpha(M,\mathbb R)$
such that $\tilde \cL_\phi h =h$. By Theorem~B and Lemma~6.5 in \cite{VV10} we know that
equilibrium states coincide with invariant probability measures absolutely continuous with respect 
to $\nu$. Hence, $\mu=h\, \nu$ is an equilibrium state for $f$ with respect to $\phi$.
This finishes the proof of the proposition.
\end{proof}

Here we provide further information on the velocity of convergence to the invariant density in the space of H\"older continuous observables. More precisely,

\begin{corollary} \label{corfast}
Set $\vr \in \La_{\kappa,\de}$ be such that $\int \varphi \, d\nu=1$ and let $h$ denote the $\Theta_\kappa$-limit of 
$\vr_n= \tilde \cL^n_\phi(\vr)$. Then, $\vr_n$ converges exponentially fast to $h$ in the H\"older norm.
\end{corollary}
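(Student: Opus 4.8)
The plan is to bootstrap the uniform exponential convergence $\vr_n\to h$ of Proposition~\ref{p.densidade} to exponential convergence in the H\"older norm, using that the whole orbit $(\vr_n)_n$ stays inside the cone $\La_{\kappa,\de}$ and that Birkhoff's theorem provides a genuine contraction in the projective metric $\Theta_\kappa$, not merely in $\Theta^+$. First I would record the input estimates. Taking $\psi=\vr$ in \eqref{eq.cont} and using the semigroup property, one gets $\Theta_\kappa(\vr_{n+k},\vr_n)\le \Delta\tau^n$ for all $n,k\ge 1$, where $\Delta$ is the $\Theta_\kappa$-diameter of $\La_{\hat\la\kappa,\de}$. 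Moreover $\tilde\cL_\phi$ preserves both $\int\cdot\,d\nu$ and $\La_{\kappa,\de}$, so each $\vr_n$ lies in $\La_{\kappa,\de}$ with $\int\vr_n\,d\nu=1$, and by \eqref{eq.supinf} together with the $\kappa m$-H\"older bound established in the proof of Proposition~\ref{p.densidade}, the norms satisfy $\|\vr_n\|_\al\le R_1+m\kappa$ uniformly in $n$.

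The core of the argument is a bridge lemma converting a small $\Theta_\kappa$-distance into a small H\"older distance, for functions with the same $\nu$-mass. I would first observe that the cone $\La_{\kappa,\de}$ is \emph{normal} in $\bigl(C^\al(M,\mathbb R),\|\cdot\|_\al\bigr)$: if $0\preceq u\preceq z$ then $u\in\La_{\kappa,\de}\cup\{0\}$ and $z-u\ge 0$ pointwise, so $0\le u\le z$, hence $\|u\|_0\le\|z\|_0$, and by Lemma~\ref{lehoglob} $|u|_\al\le m|u|_{\al,\de}\le m\kappa\inf u\le m\kappa\|z\|_0$, whence $\|u\|_\al\le(1+m\kappa)\|z\|_\al$. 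Next, let $v,w\in\La_{\kappa,\de}$ with $\int v\,d\nu=\int w\,d\nu$; since the cone is closed we have $A_\kappa(v,w)\,w\preceq v\preceq B_\kappa(v,w)\,w$, and integrating $v-A_\kappa w\ge 0$ and $B_\kappa w-v\ge 0$ against $\nu$ gives $A_\kappa\le 1\le B_\kappa$, while $\log(B_\kappa/A_\kappa)=\Theta_\kappa(v,w)$. Writing $v-w=(v-A_\kappa w)-(1-A_\kappa)w$ and noting $0\preceq v-A_\kappa w\preceq (B_\kappa-A_\kappa)w$, normality yields
\[
\|v-w\|_\al \;\le\; (1+m\kappa)(B_\kappa-A_\kappa)\|w\|_\al+(1-A_\kappa)\|w\|_\al \;\le\; (2+m\kappa)\bigl(e^{\Theta_\kappa(v,w)}-1\bigr)\,\|w\|_\al ,
\]
where I used $B_\kappa-A_\kappa=A_\kappa(e^{\Theta_\kappa(v,w)}-1)\le e^{\Theta_\kappa(v,w)}-1$ and $1-A_\kappa\le B_\kappa-A_\kappa$.

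Combining the two steps with $v=\vr_{n+k}$, $w=\vr_n$ and the uniform bounds above, and using $e^x-1\le xe^\Delta$ on $[0,\Delta]$, one obtains $\|\vr_{n+k}-\vr_n\|_\al\le (2+m\kappa)(R_1+m\kappa)\Delta e^{\Delta}\,\tau^n=:C\tau^n$ for all $k\ge 1$ and all $n\ge 1$, with $C$ independent of $n$ and $k$. Hence $(\vr_n)_n$ is Cauchy in the Banach space $C^\al(M,\mathbb R)$ and converges there; since it already converges to $h$ in the $C^0$ norm, the $C^\al$-limit is $h$, and letting $k\to\infty$ in the last inequality gives $\|\vr_n-h\|_\al\le C\tau^n$, which is the assertion. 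The only genuinely nonroutine point is the bridge lemma — concretely, the normality of $\La_{\kappa,\de}$ in the H\"older norm (which is where Lemma~\ref{lehoglob} is essential) and the bookkeeping turning $\Theta_\kappa$ into $\|\cdot\|_\al$; no information about $\cL_\phi$ beyond \eqref{eq.cont}, Theorem~\ref{t.cone.invariance} and Lemma~\ref{lehoglob} is needed, and in particular one avoids any lower-semicontinuity discussion of $\Theta_\kappa$ by arguing through Cauchyness in $C^\al$.
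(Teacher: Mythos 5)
Your proof is correct, and it takes a genuinely different route from the paper's.

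The paper argues directly with $h$ and $\vr_n$: it uses the explicit formula for $A_\kappa$ and $B_\kappa$ from Lemma~\ref{l.cone-metric}, shows by a pointwise extremal argument (choosing $z_n$ where $\vr_n > h$ and $x_0$ minimizing $\vr_n - h$) that $A_\kappa(h,\vr_n)\le 1\le B_\kappa(h,\vr_n)$, asserts that these lie within $e^{\pm\Delta\tau^n}$, and then manipulates the resulting pointwise inequalities on the quantities $H_h(x,y)$ and $H_{\vr_n}(x,y)$ to deduce $|h-\vr_n|_{\alpha,\delta}\le 5R_1\Delta\tau^n$ before invoking Lemma~\ref{lehoglob}. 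That argument implicitly needs to know that $\Theta_\kappa(h,\vr_n)\le\Delta\tau^n$, i.e.\ that the contraction estimate for $\Theta_\kappa(\vr_{n+k},\vr_n)$ survives the limit $k\to\infty$ — a lower-semicontinuity fact about the projective metric that is not spelled out.

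You instead isolate a clean structural fact (normality of $\overline{\La_{\kappa,\de}}$ in $(C^\alpha,\|\cdot\|_\alpha)$, which is exactly where Lemma~\ref{lehoglob} enters) and use it to convert $\Theta_\kappa$-smallness between \emph{two iterates} $\vr_{n+k}$ and $\vr_n$ into $\|\cdot\|_\alpha$-smallness, giving Cauchyness in $C^\alpha$ and then passing to the limit in $k$. This buys you two things: (i) the bridge lemma $\|v-w\|_\alpha\le(2+m\kappa)(e^{\Theta_\kappa(v,w)}-1)\|w\|_\alpha$ for cone elements with equal $\nu$-mass is a reusable and arguably more illuminating statement than the paper's ad hoc pointwise bookkeeping, and (ii) you never need to discuss semicontinuity of $\Theta_\kappa$ at the limit point $h$, since you let completeness of $C^\alpha$ do that work. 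The two proofs yield constants of the same order ($O(R_1\kappa m\Delta)\tau^n$); the paper's is more hands-on, yours is more modular. Both are sound. (A trivial notational remark: you write $A_\kappa(v,w)w\preceq v\preceq B_\kappa(v,w)w$, whereas with the paper's conventions these should be $A_\kappa(w,v)$, $B_\kappa(w,v)$; this does not affect the argument.)
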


\begin{proof}
It follows from and \eqref{eqqq} and \eqref{eq.supconv} that $|\vr_n- h|_{\infty}  \leq  3 R_1 \Delta \tau^n$ and
\begin{equation}\label{eq.est1}
e^{-\Delta \tau^n}  
	\leq \inf \frac{\vr_n}{h} \leq 1 
	\leq \sup \frac{\vr_n}{h}
	\leq  e^{\Delta \tau^n}
\end{equation}
for every $n\in\mathbb N$. Now we claim that  $B_\kappa(h, \vr_n) \geq 1$. 
In fact this is immediate in the case that $\vr_n \equiv h$. Assume otherwise, by contradiction, and
notice that $B_\kappa(h, \vr_n) < 1$ implies $\varphi_n\neq h$.
Using \eqref{eq.est1}, there exists a point $z= z_n \in M$ such that $\vr_n(z) > h(z)$. Take $x_0$ such that
$\vr_n(x_0)- h(x_0)= \min \{\vr_n-h\}$. Therefore, if $0< d(w, x_0)< \delta$ we obtain that
$$
\frac{\vr_n(w)-\vr_n(x_0)}{d(w, x_0)^\alpha} \geq \frac{h(w)- h(x_0)}{d(w, x_0)^\alpha}.
$$
In consequence
$$
B_\kappa(h, \vr_n) 
	\geq \frac {\vr_n(z_n) -  (h(w)- h(x_0))/\kappa d(w, x_0)^\alpha}{h(z_n)
		 -  (\vr_n(w)- \vr_n(x_0))/\kappa d(w, x_0)^\alpha } 
	\geq 1.
$$
Analogously, one concludes that $A_\kappa(h, \vr_n) \leq 1$. Using the definition of $\Theta_\kappa$ and
the exponentially fast $\Theta_\kappa$-convergence of $\vr_n$ we get
$
e^{-\Delta \tau^n} 
	< A_\kappa(h, \vr_n) 
	\leq 1 \leq B_\kappa(h, \vr_n) 
	\leq e^{\Delta \tau^n}, \forall n \in \natural.
$
For notational simplicity, given $x\neq y$, set $H_{h}(x,y)=(h(x)- h(y))/\kappa d(x, y)^\alpha$ and $H_{\vr_n}$ be the corresponding expression for $\vr_n$. The previous estimates imply that
$e^{\Delta \tau^n} H_h(x,y) - H_{\vr_n}(x,y) \leq e^{\Delta \tau^n} \vr_n(z) - h(z)$. In particular 
$$
H_h(x,y)- H_{\vr_n} (x,y)
	< \vr_n(z) - h(z) + (e^{\Delta \tau^n} - 1)\cdot (\vr_n(z)- H_{\vr_n}(x, y))
	\leq 5 R_1 \Delta \tau^n
$$
for every large $n$. Since the other inequality follows from completely analogous computations 
one deduces that  $|h - \vr_n|_{\alpha, \delta} \leq 5 R_1 \Delta \tau^n$ for every large $n$. Therefore,
$|h - \vr_n|_{\alpha} \leq 5 m R_1 \Delta \tau^n$ which together with the previous estimate
$\|h - \vr_n\|_{\infty} \leq 3 R_1 \Delta \tau^n$ proves the corollary.
\end{proof}

The strict invariance of the cone $\La_{\kappa,\de}$ is now enough to obtain a spectral gap property
for the normalized operator $\tilde\cL_\phi=\la_\phi^{-1}\cL_\phi$. 

\begin{theorem}{(Spectral Gap)}\label{t.gap}
There exists $0< r_0< 1$ such that the operator $\tilde \cL_\phi$ acting on the space $C^\al(M,\mathbb R)$ 
admits a decomposition of its  spectrum given by $\Sigma= \{1\} \cup \Sigma_0$, where $\Sigma_0$ contained in a
ball $B(0, r_0)$.
\end{theorem}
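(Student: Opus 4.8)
The plan is to upgrade the cone contraction already established into a genuine spectral gap by splitting $C^\al(M,\mathbb R)$ into the line spanned by the invariant density and the kernel of integration against $\nu$. Let $h\in C^\al(M,\mathbb R)$ be the unique fixed density of $\tilde\cL_\phi$ provided by Proposition~\ref{p.densidade}, normalized so that $\int h\,d\nu=1$, and set $\cH_0=\{\psi\in C^\al(M,\mathbb R):\int\psi\,d\nu=0\}$. Since $\psi\mapsto\int\psi\,d\nu$ is a bounded functional on $C^\al(M,\mathbb R)$ (indeed $|\int\psi\,d\nu|\le\|\psi\|_0\le\|\psi\|_\al$) which does not vanish on $h$, one gets a topological direct sum $C^\al(M,\mathbb R)=\mathbb R\,h\oplus\cH_0$ with bounded projections. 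Both summands are $\tilde\cL_\phi$-invariant: $\tilde\cL_\phi h=h$ by construction, and using $\cL_\phi^*\nu=\la\nu$ from Proposition~\ref{p.conformal.measure} one has $\int\tilde\cL_\phi\psi\,d\nu=\int\psi\,d\nu$ for every $\psi$, so $\tilde\cL_\phi(\cH_0)\subset\cH_0$. On $\mathbb R\,h$ the operator acts as the identity, so it only remains to bound the spectral radius of $\tilde\cL_\phi|_{\cH_0}$.

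The key step is the exponential bound $\|\tilde\cL_\phi^n\psi\|_\al\le C\tau^n\|\psi\|_\al$ for every $\psi\in\cH_0$, where $0<\tau<1$ is the contraction rate from Corollary~\ref{corfast} and $C$ does not depend on $\psi$ or $n$. Given $\psi\in\cH_0$, choose the constant $A=A(\psi)$ to be a fixed multiple of $\|\psi\|_\al$, depending only on $\kappa$, large enough that both the constant function $A$ and $\psi+A$ lie in $\La_{\kappa,\de}$: since $|\psi+A|_{\al,\de}=|\psi|_{\al,\de}\le\|\psi\|_\al$ and $\inf(\psi+A)\ge A-\|\psi\|_\al$, the cone condition reduces to $A\ge\tfrac{\kappa+1}{\kappa}\|\psi\|_\al$. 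Because $\int(\psi+A)\,d\nu=A=\int A\,d\nu$, the functions $(\psi+A)/A$ and $A/A\equiv\um$ both belong to $\La_{\kappa,\de}$ and have $\nu$-integral $1$, so Corollary~\ref{corfast} applies to each and gives that $\tilde\cL_\phi^n\big((\psi+A)/A\big)$ and $\tilde\cL_\phi^n\um$ both converge to $h$ in the H\"older norm at rate $\tau^n$, with the same limit $h$ (this is exactly the uniqueness statement of Proposition~\ref{p.densidade}). Writing $\psi=(\psi+A)-A$ and subtracting these two convergences yields $\|\tilde\cL_\phi^n\psi\|_\al\le 2A\cdot 5mR_1\Delta\,\tau^n\le C\tau^n\|\psi\|_\al$, where $R_1,\Delta$ are the constants appearing in \eqref{eq.supinf} and Proposition~\ref{propdiam} and $C$ depends only on $\kappa,m,\diam(M),R_1,\Delta$.

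Once this estimate is available, the conclusion is immediate: it reads $\|\tilde\cL_\phi^n|_{\cH_0}\|_\al\le C\tau^n$, so the spectral radius of $\tilde\cL_\phi|_{\cH_0}$ is at most $\tau$; pick any $r_0\in(\tau,1)$. Since $C^\al(M,\mathbb R)=\mathbb R\,h\oplus\cH_0$ is a $\tilde\cL_\phi$-invariant topological splitting, $\sigma(\tilde\cL_\phi)=\{1\}\cup\sigma(\tilde\cL_\phi|_{\cH_0})$ and the second set is contained in $B(0,r_0)$. In particular $1$ is isolated in $\sigma(\tilde\cL_\phi)$ and, being disjoint from $\sigma(\tilde\cL_\phi|_{\cH_0})$, its spectral projection coincides with the projection onto $\mathbb R\,h$ along $\cH_0$, hence is one-dimensional, so $1$ is a simple eigenvalue. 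Setting $\Sigma_0=\sigma(\tilde\cL_\phi|_{\cH_0})$ gives the asserted decomposition $\Sigma=\{1\}\cup\Sigma_0$ with $\Sigma_0\subset B(0,r_0)$. I expect the only real work to be book-keeping: verifying that $A(\psi)$ is genuinely linear in $\|\psi\|_\al$ with constant independent of $\psi$, and that the constants $R_1,\Delta$ inherited from Proposition~\ref{propdiam} and Corollary~\ref{corfast} do not degenerate for the fixed choices of $\kappa$ and $\de$ made before Theorem~\ref{t.cone.invariance} — everything substantive has already been done in the cone estimates.
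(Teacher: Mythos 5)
Your proof follows essentially the same route as the paper's: both split $C^\al(M,\mathbb R)$ into $\mathbb R\,h\oplus\{\psi:\int\psi\,d\nu=0\}$, both shift an element of the second summand by a positive constant so that the result lies in $\La_{\kappa,\de}$, and both invoke the exponential H\"older-norm convergence from Corollary~\ref{corfast} twice (for the shifted function and for the constant) to conclude that $\tilde\cL_\phi^n$ contracts the zero-average subspace at rate $\tau^n$. The only cosmetic difference is that the paper restricts to $\|\vr\|_\al\le 1$ and uses the fixed shift $\vr\mapsto\vr+2$ while you keep a general $\psi$, take $A\ge\tfrac{\kappa+1}{\kappa}\|\psi\|_\al$, and divide by $A$ before applying Corollary~\ref{corfast}; this is the same estimate made homogeneous in $\psi$, and the resulting constants agree up to bookkeeping.
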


\begin{proof}
Let $E_1$ be the one-dimensional eigenspace relative to the eingenvalue 1, and let
$E_0:= \{\vr \in C^\alpha(M,\mathbb R): \int \vr \,d\nu= 0\}$. Observe that $\int h\,d\nu=1$,  
the subspaces $E_0, E_1$ are $\tilde\cL_\phi$-invariant and $C^\alpha(M,\mathbb R)= E_1 \oplus E_0$: given 
$\varphi\in C^\alpha(M,\mathbb R)$ just write  $\varphi= \int \varphi\,d\nu . h+[\varphi- \int \varphi\,d\nu .  h]$.
Therefore, to obtain the spectral gap property it is enough to prove that $\tilde \cL_\phi^n |_{E_0}$ 
is a contraction for any large $n$. 

Take $\kappa \ge 1$ large such that $\La_{\kappa,\de}$ is preserved by $\tilde \cL_\phi$.
Pick $\varphi \in E_0$ with norm less or equal to $1$ and notice that $\vr+ 2 \in \La_{\kappa,\de}$
because $|\vr+2|_{\al,\de}=|\vr|_{\al,\de}\le 1$ and also $1 \le \kappa \inf |\vr+2|$.
Therefore $\tilde \cL_\phi^n (\varphi+2)$ converges to $ \int (\varphi+2) \,d\nu \cdot h=2 h$ and 
$$
\|\tilde \cL_\phi^n(\vr)\| 
	= \|\tilde \cL_\phi^n(\vr + 2)- \tilde\cL_\phi^n(2)\|  
	\leq   \|\tilde \cL_\phi^n(\vr+ 2)- 2 h\| +  \|\tilde \cL_\phi^n(2)- 2 h\| 
	\leq  20 K R_1 \Delta \tau^n,
$$
is exponentially contracted. This concludes the proof of the theorem.
\end{proof}

A first consequence of the spectral gap  is the following strong convergence.

\begin{corollary}
The equilibrium state $\mu$ coincides with the limit of the push-forwards $(f^j)_*\nu$ of the
conformal measure $\nu$.
\end{corollary}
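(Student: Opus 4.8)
The plan is to reduce weak$^*$ convergence of the push-forwards $(f^j)_*\nu$ to the uniform convergence $\tilde\cL_\phi^j\um\to h$ already established in Proposition~\ref{p.densidade}, using only the elementary commutation identity for the transfer operator together with the eigenmeasure property of $\nu$. First I would record the pointwise identity
$$
\cL_\phi^j\big(\psi\cdot(\varphi\circ f^j)\big)=\varphi\cdot\cL_\phi^j\psi,
$$
valid for all continuous $\varphi,\psi$, which follows by induction from $\cL_\phi(\psi\cdot(\varphi\circ f))(x)=\sum_{f(y)=x}e^{\phi(y)}\psi(y)\varphi(x)=\varphi(x)\,\cL_\phi\psi(x)$. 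Specializing to $\psi\equiv\um$ gives $\cL_\phi^j(\varphi\circ f^j)=\varphi\cdot\cL_\phi^j\um$.

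Next I would use that $\nu$ is the conformal measure of Proposition~\ref{p.conformal.measure}, so $\cL_\phi^*\nu=\la\nu$ and hence $\int\cL_\phi^j g\,d\nu=\la^j\int g\,d\nu$ for every continuous $g$. Combining the two facts, for every $j\ge1$ and every $\varphi\in C^0(M,\mathbb R)$,
$$
\int\varphi\,d\big((f^j)_*\nu\big)=\int\varphi\circ f^j\,d\nu=\la^{-j}\int\cL_\phi^j(\varphi\circ f^j)\,d\nu=\int\varphi\cdot\tilde\cL_\phi^j\um\,d\nu .
$$
Since $|\um|_{\al,\de}=0$ we have $\um\in\La_{\kappa,\de}$ for every large $\kappa$, and $\int\um\,d\nu=1$, so Proposition~\ref{p.densidade} (and the rate in Corollary~\ref{corfast}) applies to $\um$ and yields $\tilde\cL_\phi^j\um\to h$ uniformly on $M$. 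Consequently
$$
\Big|\int\varphi\,d\big((f^j)_*\nu\big)-\int\varphi\,d\mu\Big|=\Big|\int\varphi\cdot(\tilde\cL_\phi^j\um-h)\,d\nu\Big|\le\|\varphi\|_0\,\|\tilde\cL_\phi^j\um-h\|_0\longrightarrow0
$$
as $j\to\infty$, so $(f^j)_*\nu\to h\nu=\mu$ in the weak$^*$ topology.

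I do not anticipate any real obstacle: the only ingredients are the commutation identity, the eigenmeasure identity $\cL_\phi^*\nu=\la\nu$, and the already proved uniform convergence of $\tilde\cL_\phi^j\um$; the single point worth stating explicitly is that $\um$ lies in the invariant cone $\La_{\kappa,\de}$, which is exactly what makes the convergence statement of Proposition~\ref{p.densidade} applicable to it.
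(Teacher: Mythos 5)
Your proof is correct and follows essentially the same route as the paper: both use the eigenmeasure identity $\cL_\phi^*\nu=\la\nu$ together with the commutation formula $\cL_\phi^j(\varphi\circ f^j)=\varphi\cdot\cL_\phi^j\um$ to rewrite $\int\varphi\,d((f^j)_*\nu)$ as $\int\varphi\,(\tilde\cL_\phi^j\um)\,d\nu$, and then invoke the uniform convergence $\tilde\cL_\phi^j\um\to h$ from Proposition~\ref{p.densidade}. You simply make explicit the commutation step and the membership $\um\in\La_{\kappa,\de}$, which the paper leaves implicit.
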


\begin{proof}
First recall that $\cL^*\nu=\la\nu$. Thus, given any $\varphi\in C^0(M)$ it follows that
$
\int \varphi \; d (f^j_*\nu) = \int \varphi \circ f^ j \; d \nu
	= \int \varphi (\la^{-j} \cL^ j 1) \; d \nu
$
which converges to $\int \varphi h \; d \nu=\int \varphi \;d\mu$ as $j$ tends to infinity. 
Since $\varphi$ is arbitrary this proves that $\mu=\lim  f^j_*\nu$ as claimed.
\end{proof}

%%%%%%%%%%%%%%%%%%%%%%%%%%%%%%%%%%%%%%%%%

\subsubsection{Uniqueness of equilibrium states and exponential decay of correlations}

In this subsection we show that there is a unique equilibrium state for $f$ with respect to $\phi$
and derive good mixing properties.

\begin{theorem}\label{t.decay}
The equilibrium state $\mu=\mu_\phi$ has exponential decay of correlations for H\"older
observables: there exists $0<\tau<1$ such that for all $\varphi\in L^1(\nu), \psi\in C^{\alpha}(M)$
there is $K(\varphi,\psi)>0$ such that
\begin{equation*}
\left|\int (\varphi\circ f^n)\psi d\mu - \int \varphi d\mu\int \psi d\mu\right|\leq
K(\varphi,\psi)\cdot\tau^n,
	\quad \forall n \geq 1
\end{equation*}
\end{theorem}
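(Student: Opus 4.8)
The plan is to reduce the decay-of-correlations estimate to the spectral gap of the normalized operator $\tilde \cL_\phi=\la^{-1}\cL_\phi$ established in Theorem~\ref{t.gap}, using the duality relation $\int (\psi\circ f^n)\,\vr\,d\nu=\int \psi\cdot(\tilde\cL_\phi^n \vr)\,d\nu$ after correctly accounting for the factor $\la^{-n}$ and the measure $\mu=h\nu$. First I would treat the case of H\"older test functions. Write $\int (\varphi\circ f^n)\,\psi\,d\mu=\int(\varphi\circ f^n)\,\psi h\,d\nu$, and apply the change-of-variables/duality formula for the (un-normalized) transfer operator: $\int (\varphi\circ f^n)\,g\,d\nu = \la^{-n}\int \varphi\cdot \cL_\phi^n g\,d\nu=\int \varphi\cdot \tilde\cL_\phi^n g\,d\nu$, with $g=\psi h\in C^\alpha(M,\mathbb R)$. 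Decompose $g$ along the splitting $C^\alpha(M,\mathbb R)=E_1\oplus E_0$ from Theorem~\ref{t.gap}, namely $g=\big(\int \psi h\,d\nu\big)h + g_0$ where $g_0\in E_0$ and $\int \psi h\,d\nu=\int\psi\,d\mu$. Since $\tilde\cL_\phi h=h$, one gets
\begin{equation*}
\int (\varphi\circ f^n)\,\psi\,d\mu - \int\varphi\,d\mu\int\psi\,d\mu
	= \int \varphi\cdot \tilde\cL_\phi^n g_0\,d\nu .
\end{equation*}
By Theorem~\ref{t.gap} there is $0<\tau<1$ with $\|\tilde\cL_\phi^n g_0\|_\alpha\le C\tau^n\|g_0\|_\alpha\le C'\tau^n\|\psi\|_\alpha$, and since $\|h\|_\alpha<\infty$ and $\nu$ is a probability we can bound $\|g_0\|_\alpha$ by a constant times $\|\psi\|_\alpha$. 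Hence $|\int \varphi\cdot\tilde\cL_\phi^n g_0\,d\nu|\le \|\varphi\|_{L^1(\nu)}\,\|\tilde\cL_\phi^n g_0\|_0\le K(\varphi,\psi)\tau^n$, which is the claimed inequality for H\"older $\psi$; note that $\varphi$ only enters through $\|\varphi\|_{L^1(\nu)}=\|\varphi\|_{L^1(\mu)}$-type quantities since $h$ is bounded above and below by \eqref{eq.supinf}.

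The remaining point is that the statement allows $\varphi\in L^1(\nu)$ rather than $\varphi$ H\"older. This is handled by the fact that, in the formula $\int (\varphi\circ f^n)\,\psi\,d\mu-\int\varphi\,d\mu\int\psi\,d\mu=\int\varphi\cdot\tilde\cL_\phi^n g_0\,d\nu$, the function $\varphi$ appears linearly and is only paired against the uniformly-controlled sequence $\tilde\cL_\phi^n g_0$ whose sup-norms decay geometrically; thus the only regularity demanded of $\varphi$ is integrability against $\nu$, and the constant $K(\varphi,\psi)$ can be taken proportional to $\|\varphi\|_{L^1(\nu)}\,\|\psi\|_\alpha$. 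Alternatively one may first prove the estimate for $\varphi$ continuous and then extend to $L^1(\nu)$ by density together with the uniform bound $\|\tilde\cL_\phi^n g_0\|_0\le C'\tau^n\|\psi\|_\alpha$, which makes the left-hand side a bounded linear functional of $\varphi\in L^1(\nu)$. Either way the extension is routine once the spectral gap has been used.

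I expect the main (though modest) obstacle to be bookkeeping rather than anything deep: one must be careful that the correct object to feed into $\tilde\cL_\phi^n$ is $g=\psi h$ and not $\psi$ alone, that the ``expected value'' component of $g$ is exactly $\big(\int\psi\,d\mu\big)h$ so that the leading term cancels against $\int\varphi\,d\mu\int\psi\,d\mu$, and that the duality identity $\int(\varphi\circ f^n)\,g\,d\nu=\la^{-n}\int\varphi\,\cL_\phi^n g\,d\nu$ holds for all $\varphi\in L^1(\nu)$ (this is just the defining property of $\cL_\phi^*$ extended by density, using $\cL_\phi^*\nu=\la\nu$ from Proposition~\ref{p.conformal.measure}). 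Once these identifications are made, the proof is a direct invocation of the spectral gap from Theorem~\ref{t.gap} and the uniform H\"older bounds from \eqref{eq.supinf} and Corollary~\ref{corfast}. Finally, uniqueness of the equilibrium state — also asserted in this subsection — follows because Proposition~\ref{p.densidade} already produced a unique H\"older density $h$ with $\cL_\phi h=\la h$, and Theorem~B and Lemma~6.5 of \cite{VV10} identify equilibrium states with $\nu$-absolutely continuous invariant measures, whose densities must be $\tilde\cL_\phi$-fixed and hence equal to $h$.
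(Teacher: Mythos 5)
Your proof is correct and rests on the same transfer-operator duality $\int(\varphi\circ f^n)\,g\,d\nu=\int\varphi\,\tilde\cL_\phi^n g\,d\nu$ with $g=\psi h$, but it diverges from the paper in how it then extracts the exponential decay. You decompose $g$ along the spectral splitting $C^\alpha(M,\mathbb R)=E_1\oplus E_0$ of Theorem~\ref{t.gap}, writing $g=\bigl(\int g\,d\nu\bigr)h+g_0$ with $g_0\in E_0$, and invoke the spectral gap to get $\|\tilde\cL_\phi^n g_0\|_\alpha\le C\tau^n\|g_0\|_\alpha$, after which the estimate is immediate because $\varphi$ only pairs against $\tilde\cL_\phi^n g_0$. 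The paper instead stays inside the cone machinery: after normalizing $\int\psi\,d\mu=1$ it rewrites the correlation as $\int\varphi\bigl(\tilde\cL_\phi^n(\psi h)/h-1\bigr)\,d\mu$, bounds $\bigl\|\tilde\cL_\phi^n(\psi h)/h-1\bigr\|_0$ via the projective-metric convergence estimate \eqref{eq.supconv} when $\psi h\in\Lambda_{\kappa,\delta}$, and for general $\psi$ decomposes $\psi h=g_B^+-g_B^-$ with $g_B^\pm=\tfrac12(|g|\pm g)+B$ for $B$ large so that each piece lies in the cone, concluding by linearity. Your route is arguably cleaner: it uses the already-packaged operator-theoretic spectral gap and avoids the positivity/shift trick needed to force membership in the cone, whereas the paper's argument is more self-contained relative to the cone contraction and gives somewhat more explicit constants. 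The two are of course morally equivalent, since Theorem~\ref{t.gap} is itself proved by the same cone contraction. Your remarks about $\varphi\in L^1(\nu)$ (the correlation is a bounded linear functional of $\varphi$ once $\|\tilde\cL_\phi^n g_0\|_0$ is controlled) and about the equivalence of $\|\varphi\|_{L^1(\nu)}$ with $\|\varphi\|_{L^1(\mu)}$ through the two-sided bounds on $h$ from \eqref{eq.supinf} both match the paper, which uses $\|\varphi\|_1=\int|\varphi|\,d\mu$.
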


\begin{proof}
First we write the correlation function
\begin{equation*}
C_{\varphi,\psi} (n):=\int (\varphi\circ f^n)\psi d\mu - \int \varphi d\mu\int \psi d\mu
	=\int (\varphi\circ f^n)\psi h d\nu - \int \varphi d\mu\int \psi d\mu.
\end{equation*}
It is no restriction to assume that $\int \psi d\mu = 1$. Then, using that $h$ is bounded away 
from zero and infinity we get
\begin{equation*}
\begin{split}
\left|\int (\varphi\circ f^n)\psi h d\nu - \int \varphi d\mu\int \psi d\mu\right| &=
\left|\int \varphi\left(\frac{\widetilde{\cL}_{\phi}^n (\psi h)}{h}-1\right)
d\mu\right|\\
&\leq \left\|\frac{\widetilde{\cL}_{\phi}^n(\psi h)}{h}-1\right\|_{0}\cdot\|\varphi\|_1
\end{split}
\end{equation*}
where $\|\varphi\|_1=\int |\varphi| d\mu$. If $\psi h\in\Lambda_{\kappa,\delta}$ for some sufficiently large $\kappa$
as in Theorem~\ref{t.cone.invariance} then it follows from \eqref{eq.supconv} that the first term in the right hand side
above satisfies
$$
\left\|\frac{\widetilde{\cL}_{\phi}^n(\psi h)}{h}-1\right\|_{0}
	\leq 2R_1\left\|\frac1h\right\|_0(e^{\Delta \tau^n}-1)
	\leq C\tau^n,
$$
for some positive constant $C$ and so
$
\left|\int (\varphi\circ f^n)\psi d\mu - \int \varphi d\mu\int \psi d\mu\right| 
	\leq K(\varphi,\psi)\tau^n.
$
In general write $\psi h=g$ where $g = g_B^+ - g_B^-$  and $g_B^\pm = \frac{1}{2} (|g| \pm g)+B$
for $B>0$ large so that $g_B^\pm\in \La_{\kappa,\de}$ and apply the latter estimates to $g_B^\pm$.
By linearity, the same estimate holds for $g$ for some constant 
 $K(\varphi,\psi)\geq K(\varphi,g_B^+)+K(\varphi,g_B^-)$.
This concludes the proof of the exponential decay of correlations.
\end{proof}

As a consequence we remove the topologically mixing assumption from \cite{VV10} and still deduce that there 
exists a unique equilibrium state and it is exact.

\begin{corollary}
The probability measure $\mu$ is exact and the unique equilibrium state for $f$ with respect to $\phi$.
\end{corollary}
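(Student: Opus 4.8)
The plan is to deduce both assertions from a single fact: the normalized transfer operator $\widetilde\cL_\phi=\lambda^{-1}\cL_\phi$ satisfies $\|\widetilde\cL_\phi^n\psi-(\int\psi\,d\nu)\,h\|_{L^1(\nu)}\to 0$ for \emph{every} $\psi\in L^1(\nu)$. First I would establish this. For $\psi\in C^\alpha(M,\mathbb R)$ one writes $\psi=g^+_B-g^-_B$ with $g^\pm_B=\tfrac12(|\psi|\pm\psi)+B\in\Lambda_{\kappa,\delta}$ for $B$ large (exactly as in the proof of Theorem~\ref{t.decay}), and the uniform convergence $\widetilde\cL_\phi^n g^\pm_B\to(\int g^\pm_B\,d\nu)\,h$ from Proposition~\ref{p.densidade} gives the claim — even in the $C^0$-norm — for such $\psi$. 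Since $\int|\widetilde\cL_\phi\psi|\,d\nu\le\int\widetilde\cL_\phi|\psi|\,d\nu=\int|\psi|\,d\nu$, the operator $\widetilde\cL_\phi$ is an $L^1(\nu)$-contraction, and as Lipschitz functions — hence $C^\alpha(M,\mathbb R)$ — are dense in $L^1(\nu)$, a routine $\varepsilon/3$-argument extends the convergence to all of $L^1(\nu)$.

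For \emph{uniqueness}, recall that by Theorem~B and Lemma~6.5 of \cite{VV10} (already used in Proposition~\ref{p.densidade}) every equilibrium state $m$ for $f$ with respect to $\phi$ is absolutely continuous with respect to $\nu$, say $m=g\,\nu$ with $g\ge 0$ and $\int g\,d\nu=1$. From $\int(\varphi\circ f)\,g\,d\nu=\lambda^{-1}\int\varphi\,\cL_\phi g\,d\nu=\int\varphi\,\widetilde\cL_\phi g\,d\nu$ for all $\varphi\in C^0(M,\mathbb R)$ one sees that $f$-invariance of $m$ is equivalent to $\widetilde\cL_\phi g=g$ ($\nu$-a.e.); hence $g=\widetilde\cL_\phi^n g$ for all $n$, and the $L^1(\nu)$-convergence forces $g=(\int g\,d\nu)\,h=h$. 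Thus $m=h\,\nu=\mu$ and $\mu$ is the unique equilibrium state.

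For \emph{exactness}, I would use the classical identification $\mathbb E_\mu[\psi\mid f^{-n}\mathcal B]=\big(\widetilde\cL_\phi^n(\psi h)/h\big)\circ f^n$ for $\psi\in L^\infty(\mu)$, which follows from the change of variables
\[
\int_{f^{-n}(B)}\psi\,d\mu=\int(\chi_B\circ f^n)\,\psi h\,d\nu=\lambda^{-n}\int\chi_B\,\cL_\phi^n(\psi h)\,d\nu=\int_B\frac{\widetilde\cL_\phi^n(\psi h)}{h}\,d\mu,\qquad B\in\mathcal B.
\]
Since $h$ is bounded away from $0$ and $\infty$ (Proposition~\ref{p.densidade}), applying the $L^1(\nu)$-convergence to $\psi h$ yields $\widetilde\cL_\phi^n(\psi h)/h\to\int\psi h\,d\nu=\int\psi\,d\mu$ in $L^1(\mu)$, and precomposition with the $\mu$-preserving map $f^n$ preserves the $L^1(\mu)$-norm, so $\mathbb E_\mu[\psi\mid f^{-n}\mathcal B]\to\int\psi\,d\mu$ in $L^1(\mu)$; density of $L^\infty(\mu)$ in $L^1(\mu)$ plus contractivity of conditional expectation extends this to all $\psi\in L^1(\mu)$. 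By the martingale convergence theorem the left-hand side also converges to $\mathbb E_\mu[\psi\mid\mathcal T]$, where $\mathcal T=\bigcap_{n\ge 0}f^{-n}\mathcal B$ is the tail $\sigma$-algebra; therefore $\mathbb E_\mu[\psi\mid\mathcal T]=\int\psi\,d\mu$ for all $\psi$, i.e.\ $\mathcal T$ is $\mu$-trivial and $(f,\mu)$ is exact. I expect the only non-mechanical point to be the passage from cone elements to general $L^1(\nu)$ densities in the first step; once that is in place, uniqueness and exactness are immediate assemblies of results already proved.
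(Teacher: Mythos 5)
Your proof is correct but takes a genuinely different route from the paper's. For \emph{exactness}, the paper argues directly from the exponential decay of correlations (Theorem~\ref{t.decay}): it takes $\vr=\vr_n\circ f^n$ tail-measurable, applies the decay estimate with the roles of the two observables swapped, observes that the constant $K(\vr_n,\psi)$ depends only on $\int|\vr_n|\,d\mu=\int|\vr|\,d\mu$ and on $\psi$ (hence is uniform in $n$), and concludes that $\int(\vr-\int\vr\,d\mu)\psi\,d\mu=0$ for all H\"older $\psi$, so $\vr$ is $\mu$-a.e.\ constant. You instead establish $L^1(\nu)$-convergence $\widetilde\cL_\phi^n\psi\to(\int\psi\,d\nu)h$ for every $\psi\in L^1(\nu)$ and then pass through the classical identity $\mathbb E_\mu[\psi\mid f^{-n}\mathcal B]=(\widetilde\cL_\phi^n(\psi h)/h)\circ f^n$ and reversed martingale convergence; this only uses convergence, not the explicit rate $\tau^n$, and is arguably more transparent about why the tail algebra is trivial. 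For \emph{uniqueness}, the paper reasons dynamically: exactness $\Rightarrow$ ergodicity, $\mu\sim\nu$ because $h$ is bounded away from $0$ and $\infty$ so $B(\mu)$ is $\nu$-full, and then Theorem~\ref{thm.VV} (finitely many ergodic equilibrium states, all $\ll\nu$, basins covering $\nu$-a.e.\ point) forces $k=1$. Your uniqueness argument is more functional-analytic: every equilibrium state has a density $g\in L^1(\nu)$ fixed by $\widetilde\cL_\phi$, and the $L^1(\nu)$-convergence forces $g=h$; this avoids the basin argument entirely, trading it for the $L^1$-contraction property of $\widetilde\cL_\phi$, which you correctly derive from $\cL_\phi^*\nu=\lambda\nu$. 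Both uniqueness proofs ultimately rest on the same input from \cite{VV10} (every equilibrium state is absolutely continuous with respect to $\nu$), but yours isolates the fixed-point uniqueness as the driving mechanism, whereas the paper leans on the finiteness/basin structure already established in Theorem~\ref{thm.VV}.
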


\begin{proof}
Let $\vr \in L^1(\mu)$ be  such that $\vr= \vr_n \circ f^n$ for some measurable functions $\vr_n$.
Given any $\psi \in C^\alpha(M)$ it follows from the previous theorem that 
$$
\Big|\int (\vr - \int \vr d\mu)  \psi d\mu\Big|
	= \Big|\int (\vr_n \circ f^n) \psi d\mu - \int\vr d\mu \int \psi d\mu \Big| 
	\leq K(\vr_n, \psi) \tau^n,
$$
where the constant $K(\vr_n, \psi)$ depends only on the value of $\int \vr_n \,d\mu= \int \vr \,d\mu$
and $\|\psi\|_\alpha$. Hence  $K(\vr_n, \psi)$ does not depend on $n$ and, consequently, 
$\int (\vr- \int \vr d\mu) \psi d\mu= 0$, for all H\"older continuous $\psi$. The later implies that 
$\vr= \int \vr \, d\mu$ for $\mu$-almost every $x$, proving that $\mu$ is exact.
In consequence, $\mu = h \nu$ is an ergodic probability measure whose basin of attraction contains 
$\nu$-almost every point. Therefore the uniqueness of the equilibrium state follows from 
Theorem~\ref{thm.VV}. 
\end{proof}
%%%%%%%%%%%%%%%%%%%%%%%%%%%%%%%%

%%%%%%%%%%%%%%%%%%%%%%%%%%%%%%%%
\subsubsection{Central limit theorem}

Here we obtain a central limit theorem from the strong mixing properties.
Let $\cF$ be the Borel sigma-algebra of $M$ and $\cF_n:=f^{-n}(\cF)$ be
a non-increasing family of $\sigma$-algebras. Recall that a function
$\xi:M\to\real$ is $\cF_n$-measurable iff $\xi=\xi_n\circ f^n$ for some measurable $\xi_n$. 
Let $L^2(\cF_n)=\{\xi\in L^2(\mu):\xi \text{ is }\cF_n\text{-measurable }\}$ and
note that $L^2(\cF_n)\supset L^2(\cF_{n+1})$ for each $n\geq 0$. Given
$\varphi\in L^2(\mu)$, we denote by $\mathbb{E}(\varphi|\cF_n)$ the
$L^2$-orthogonal projection of $\varphi$ to $L^2(\cF_n)$. 
The strategy now is to apply a general result due to Gordin
by proving that the $L^2(\cF_n)$ components $\mathbb{E}(\varphi|\cF_n)$ of any observable $\varphi$ 
are summable.

\begin{lemma}\label{l.projections}
For every $\alpha$-H\"older continuous function $\varphi$ with $\int \varphi d\mu = 0$ there is $R_0=R_0(\varphi)$ 
such that $\|\mathbb{E}( \varphi |\cF_n)\|_2\leq R_0\tau^n$ for all $n\geq 0$.
\end{lemma}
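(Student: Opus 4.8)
The plan is to express the conditional expectation $\mathbb{E}(\varphi|\cF_n)$ in terms of iterates of the transfer operator and then apply the exponential convergence already obtained in Corollary~\ref{corfast} (or estimate~\eqref{eq.supconv}). First I would recall the standard identity relating conditional expectations with respect to $\cF_n = f^{-n}(\cF)$ to the transfer operator: since $\cF_n$-measurable functions are exactly those of the form $\xi_n\circ f^n$, and since $\mu = h\nu$ with $\cL_\phi^*\nu = \lambda\nu$, one has for $\mu$-integrable $\varphi$
\begin{equation*}
\mathbb{E}(\varphi\mid\cF_n) = \left(\frac{\widetilde\cL_\phi^n(\varphi h)}{h}\right)\circ f^n,
\end{equation*}
where $\widetilde\cL_\phi = \lambda^{-1}\cL_\phi$ is the normalized operator. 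This is verified by checking the defining property of conditional expectation: for any bounded measurable $\psi_n$, $\int (\psi_n\circ f^n)\,(\xi_n\circ f^n)\,h\,d\nu = \int \psi_n\,\widetilde\cL_\phi^n(\xi_n\circ f^n \cdot h)\,d\nu$ together with $\widetilde\cL_\phi^n((\xi_n\circ f^n)\,g) = \xi_n\,\widetilde\cL_\phi^n(g)$ (the "dual" cocycle identity for the transfer operator), which pins down $\xi_n = \widetilde\cL_\phi^n(\varphi h)/h$.

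Next, since $f^n$ preserves $\mu$, the $L^2(\mu)$ norm is unchanged under composition with $f^n$, so
\begin{equation*}
\|\mathbb{E}(\varphi\mid\cF_n)\|_2 = \left\|\frac{\widetilde\cL_\phi^n(\varphi h)}{h}\right\|_2 \le \left\|\frac1h\right\|_0 \cdot \|\widetilde\cL_\phi^n(\varphi h)\|_2 \le \left\|\frac1h\right\|_0 \cdot \|\widetilde\cL_\phi^n(\varphi h)\|_0.
\end{equation*}
Now $\int \varphi\,d\mu = 0$ means $\int \varphi h\,d\nu = 0$, and since $\widetilde\cL_\phi$ preserves $\int\cdot\,d\nu$, the limit of $\widetilde\cL_\phi^n(\varphi h)$ is $(\int \varphi h\,d\nu)\,h = 0$. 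To get the exponential rate I would decompose $\varphi h = g_B^+ - g_B^-$ with $g_B^\pm = \frac12(|\varphi h|\pm \varphi h) + B$ for $B>0$ large enough that both $g_B^\pm \in \La_{\kappa,\de}$ (possible since $\varphi h$ is H\"older and $B$ just shifts the infimum up), exactly as in the proof of Theorem~\ref{t.decay}. Applying~\eqref{eq.supconv} to the normalized versions of $g_B^\pm$ gives $\|\widetilde\cL_\phi^n(g_B^\pm) - (\int g_B^\pm\,d\nu)\,h\|_0 \le C\,\tau^n$, and subtracting, the $h$-terms cancel (because $\int g_B^+ d\nu - \int g_B^- d\nu = \int \varphi h\,d\nu = 0$), yielding $\|\widetilde\cL_\phi^n(\varphi h)\|_0 \le 2C\,\tau^n$. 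Setting $R_0 = 2C\|1/h\|_0$ finishes the proof.

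The main obstacle is establishing the operator-theoretic formula for $\mathbb{E}(\varphi|\cF_n)$ cleanly; once that identity is in hand, everything reduces to the contraction estimate~\eqref{eq.supconv} already proved, handled by the same positive/negative-part splitting trick used for decay of correlations. A minor point to be careful about is that $\varphi h$ need not itself lie in the invariant cone (it may change sign and need not have positive infimum), which is precisely why the shift-by-$B$ decomposition is needed rather than a direct application of the cone contraction.
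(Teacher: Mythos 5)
Your proposal is correct but takes a more explicit route than the paper. The paper's proof is much shorter: it uses the duality characterization $\|\mathbb{E}(\varphi|\cF_n)\|_2 = \sup\{\int \xi\varphi\,d\mu : \xi\in L^2(\cF_n),\ \|\xi\|_2=1\}$, rewrites $\xi = \psi\circ f^n$ with $\|\psi\|_2 = \|\xi\|_2$ by $f$-invariance of $\mu$, and then — since $\int\varphi\,d\mu=0$ — recognizes $\int(\psi\circ f^n)\varphi\,d\mu$ as exactly the correlation estimated in Theorem~\ref{t.decay}. (A point the paper leaves implicit, which both you and a careful reader should check: inside the proof of Theorem~\ref{t.decay} the constant $K$ scales like $\|\psi\|_1\leq\|\psi\|_2=1$ in the $L^1$-variable, so the supremum stays finite and depends only on the H\"older norm of the fixed $\varphi$.) Your proof instead derives the explicit identity $\mathbb{E}(\varphi|\cF_n) = \bigl(\widetilde{\cL}_\phi^n(\varphi h)/h\bigr)\circ f^n$, passes to the $C^0$ norm, and then re-runs the positive/negative $g_B^\pm$ decomposition already used in the proof of decay of correlations; this is a valid, self-contained argument, but it duplicates work that Theorem~\ref{t.decay} supplies as a black box. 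The conditional-expectation identity you establish is a clean and genuinely useful fact, and your handling of the sign issue — shifting by $B$ so both pieces land in $\La_{\kappa,\de}$, with the $h$-terms cancelling because $\int\varphi h\,d\nu=0$ — is exactly right. Both routes give the same exponential bound; the paper's is shorter, yours is more transparent about where the contraction enters.
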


\begin{proof} 
Observe that since $\|\psi\|_1\leq\|\psi\|_2$ and $\int \varphi d\mu = \int \varphi h d\nu = 0$ it follows
that
\begin{equation*}
\begin{split}
\|\mathbb{E}( \varphi |\cF_n)\|_2 &= \sup\Big\{\int\xi \varphi d\mu: \xi\in L^2(\cF_n),
\|\xi\|_2 = 1\Big\} \\
&= \sup\Big\{\int(\psi\circ f^n) \varphi \; d\mu: \psi\in L^2(\mu), \|\psi\|_2 = 1\Big\} 
\leq K( \varphi,\psi) \tau^n,
\end{split}
\end{equation*}
which proves the lemma.
\end{proof}

Now the central limit theorem in Corollary~\ref{c.CLT}  follows from the following 
abstract result due to Gordin (see e.g.~\cite{Vi97}).

\begin{theorem} 
Let $(M,\cF,\mu)$ be a probability space, $f:M\to M$ be a
measurable map such that $\mu$ is $f$-invariant and ergodic. Consider $\varphi\in
L^2(\mu)$ such that $\int \varphi d\mu = 0$ and denote by $\cF_n$ the non-increasing
sequence of sigma-algebras $\cF_n=f^{-n}(\cF), n\geq 0$. 
If $\sum\limits_{n=0}^{\infty}\|\mathbb{E} (\varphi |\cF_n)\|_2 <\infty$
then $\sigma_\varphi$ is finite, and $\sigma_\varphi = 0$ iff $\varphi=u\circ f - u$ for some $u\in
L^2 (\mu)$. Moreover, if $\sigma_\varphi>0$ then for any interval $A\subset\real$
\begin{equation*}
\mu\Big(x\in M: \frac{1}{\sqrt{n}}\sum\limits_{j=0}^{n-1} \left(\varphi(f^j(x))-\int \varphi d\mu \right)\in A\Big)
	\to \frac{1}{\sigma_\varphi \sqrt{2\pi}} \int_A e^{-\frac{t^2}{2\sigma^2}} dt,
\end{equation*}
as $n$ tends to infinity.
\end{theorem}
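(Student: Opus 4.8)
The plan is to reduce the statement to the classical central limit theorem for (reverse) martingale differences via Gordin's martingale--coboundary decomposition; this is a standard argument and can also be found in \cite{Vi97}. Let $P\colon L^2(\mu)\to L^2(\mu)$ be the transfer operator dual to the Koopman isometry, i.e. the bounded operator characterized by $\int (Pg)\,h\,d\mu=\int g\,(h\circ f)\,d\mu$ for all $g\in L^2(\mu)$, $h\in L^\infty(\mu)$. A direct computation using $f$-invariance of $\mu$ gives $\mathbb{E}(g\mid f^{-1}\cF)=(Pg)\circ f$, hence $\mathbb{E}(\varphi\mid\cF_n)=(P^n\varphi)\circ f^n$ and $\|\mathbb{E}(\varphi\mid\cF_n)\|_2=\|P^n\varphi\|_2$. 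Thus the hypothesis $\sum_{n\ge 0}\|\mathbb{E}(\varphi\mid\cF_n)\|_2<\infty$ is precisely what is needed for the series $u:=\sum_{n\ge 1}P^n\varphi$ to converge in $L^2(\mu)$.

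Next I would set $\psi:=\varphi+u-u\circ f\in L^2(\mu)$, so that $\varphi=\psi+(u\circ f-u)$, and verify $P\psi=0$ using $P(g\circ f)=g$ and $Pu=u-P\varphi$. Since $\mathbb{E}(g\mid f^{-1}\cF)=(Pg)\circ f$, the identity $P\psi=0$ says $\mathbb{E}(\psi\mid f^{-1}\cF)=0$, and more generally $\mathbb{E}(\psi\circ f^{j}\mid\cF_{j+1})=\mathbb{E}(\psi\mid f^{-1}\cF)\circ f^{j}=0$ for every $j\ge 0$. Therefore $(\psi\circ f^{j})_{j\ge 0}$ is a stationary, ergodic, square-integrable reverse martingale difference sequence for the decreasing filtration $(\cF_j)_j$; in particular its terms are pairwise orthogonal, so $\|S_n\psi\|_2^2=n\|\psi\|_2^2$ where $S_n\psi:=\sum_{j=0}^{n-1}\psi\circ f^{j}$. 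Applying the martingale central limit theorem to the reversed arrays $\psi\circ f^{\,n-1},\dots,\psi\circ f^{\,0}$ then yields $n^{-1/2}S_n\psi\xrightarrow{d}N(0,\|\psi\|_2^2)$.

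To finish I would note that $\|u\circ f^{n}-u\|_2\le 2\|u\|_2$ is bounded in $n$, so $n^{-1/2}(u\circ f^{n}-u)\to 0$ in probability, and then Slutsky's theorem gives $n^{-1/2}\sum_{j=0}^{n-1}\bigl(\varphi\circ f^{j}-\int\varphi\,d\mu\bigr)\xrightarrow{d}N(0,\|\psi\|_2^2)$, which is the asserted convergence with $\sigma_\varphi^2=\|\psi\|_2^2$. The same decomposition and the orthogonality above show $\tfrac1n\|S_n\varphi\|_2^2\to\|\psi\|_2^2$, identifying $\sigma_\varphi^2$ with the series $\int\varphi^2\,d\mu+2\sum_{j\ge 1}\int\varphi\,(\varphi\circ f^{j})\,d\mu$ appearing in the statement; moreover $\sigma_\varphi=0$ forces $\psi=0$ in $L^2(\mu)$, i.e. $\varphi=u\circ f-u$ with $u\in L^2(\mu)$, while conversely any coboundary $\varphi=u\circ f-u$ with $u\in L^2(\mu)$ satisfies $\|S_n\varphi\|_2\le 2\|u\|_2$ and hence $\sigma_\varphi=0$.

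The only genuinely non-bookkeeping point is the martingale central limit step: one must be careful that the natural filtration here is \emph{decreasing} under $f$, so that the partial sums $S_n\psi$ are reverse martingales, and invoke the appropriate (reversed) version of the martingale CLT --- equivalently, one checks the Lindeberg-type and conditional-variance conditions for the finite reversed arrays, which by stationarity reduce to the Birkhoff ergodic theorem. Everything else is the algebraic verification that $P\psi=0$ and the elementary $L^2$-estimate on the coboundary term.
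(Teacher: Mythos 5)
Your proof is correct. The paper does not actually prove this theorem---it is stated as a known result of Gordin with a citation to Viana's lecture notes \cite{Vi97}---and your argument (transfer-operator identity $\mathbb{E}(\varphi\mid\cF_n)=(P^n\varphi)\circ f^n$, convergence of $u=\sum_{n\ge1}P^n\varphi$, martingale--coboundary decomposition $\varphi=\psi+(u\circ f-u)$ with $P\psi=0$, reverse martingale CLT via Birkhoff for the conditional variances, Slutsky to discard the coboundary term, and identification of $\sigma_\varphi^2=\|\psi\|_2^2$ with the correlation series) is precisely the standard Gordin proof found in that reference.
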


%%%%%%%%%%%%%%%%%%%%%%%%%%%%%%%%%%%%%%%%%%%%
\subsubsection{Uniform continuity of the densities for the equilibrium states}

Here we shall prove the first stability result for the equilibrium state: the density of the equilibrium state with 
respect to the corresponding  conformal measure vary continuously in the $C^0$-norm. This is not immediate since
the Ruelle-Perron-Frobenius operator in general does not vary continuously with the dynamical system in the space 
of H\"older continuous observables as discussed in Example~\ref{ex.transfer}. 
Nevertheless, we could get the continuity of the density function which is the main result of this section.

\begin{proposition}\label{prop:C0cont}
Let $\cF$ be a family of local homeomorphisms and $\cW$ be a family of potentials satisfying (H1), (H2) and
(P) with uniform constants. Then the topological pressure 
$\cF\times \cW \ni (f,\phi) \mapsto \log \la_{f,\phi}= P_{\text{top}}(f,\phi)$ and  the density function
\[
\begin{array}{ccc}
\mathcal F\times \cW & \to & (C^\al(M,\mathbb R),\|\cdot \|_0) \\
(f,\phi) & \mapsto & \frac{d\mu_{f,\phi}}{d\nu_{f,\phi}}
\end{array}
\]
are continuous.
\end{proposition}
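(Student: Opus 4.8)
The plan is to reduce the continuity of both the topological pressure and the density function to the uniform contraction estimates for the normalized transfer operator in the cone $\Lambda_{\kappa,\delta}$ and to a careful, uniform control of how $\cL_{f,\phi}\um$ varies with $(f,\phi)$ in the $C^0$-norm. First I would fix uniform constants $L,\sigma,q,\deg(f),\vep_\phi$ and hence a uniform $\kappa$ and $\delta$ so that, by Theorem~\ref{t.cone.invariance} and Proposition~\ref{propdiam}, all operators $\widetilde\cL_{f,\phi}=\la_{f,\phi}^{-1}\cL_{f,\phi}$ in the family preserve $\Lambda_{\kappa,\delta}$ and contract its projective metric with a \emph{uniform} rate $\tau<1$ and uniform diameter bound $\Delta$. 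The key point, which replaces the (false) continuity of $\cL_{f,\phi}$ on the H\"older space, is the following $C^0$ estimate: if $(g,\psi)$ is close to $(f,\phi)$ then $\|\cL_{g,\psi}\um-\cL_{f,\phi}\um\|_0$ is small. This follows from Lemma~\ref{l.preimagesmatching} (pairing the $\deg(f)$ preimages of a point under $f$ and under $g$ and controlling their distance by $L\,\|f-g\|_\al$) together with the H\"older continuity of $e^\phi$ and $\|\phi-\psi\|_0$ small; note only the sup-norm of the difference is controlled, not the H\"older seminorm, which is exactly why we must work in $(C^\al,\|\cdot\|_0)$.

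Next I would handle the topological pressure. Since $\la_{f,\phi}=\int \cL_{f,\phi}\um\,d\nu_{f,\phi}$ and more usefully $\la_{f,\phi}$ is the spectral radius, I would instead use the characterization $\log\la_{f,\phi}=\lim_{n\to\infty}\tfrac1n\log\|\cL_{f,\phi}^n\um\|_0$ together with a uniform-in-the-family comparison of $\|\cL_{f,\phi}^n\um\|_0$ with $\la_{f,\phi}^n$ coming from the cone invariance: for $\vr\in\Lambda_{\kappa,\delta}$ normalized, $\widetilde\cL_{f,\phi}^n\vr$ stays in a fixed compact piece of the cone, so $\|\cL_{f,\phi}^n\um\|_0$ is comparable to $\la_{f,\phi}^n$ up to a factor in $[R_1^{-1},R_1]$ uniform in the family. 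Combining this with the $C^0$-continuity of $(g,\psi)\mapsto\cL_{g,\psi}^n\um$ for each fixed $n$ (iterating the one-step estimate above, using that the operators are uniformly bounded) gives, for fixed large $n$, that $\tfrac1n\log\|\cL_{g,\psi}^n\um\|_0$ is close to $\tfrac1n\log\|\cL_{f,\phi}^n\um\|_0$; letting $n\to\infty$ and using the uniform comparison yields $|\log\la_{g,\psi}-\log\la_{f,\phi}|$ small. This proves continuity of $\Ptop$.

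For the density function I would argue as follows. Write $h_{f,\phi}$ for the fixed density, normalized by $\int h_{f,\phi}\,d\nu_{f,\phi}=1$, and recall from Proposition~\ref{p.densidade} and Corollary~\ref{corfast} that $\widetilde\cL_{f,\phi}^n\um\to h_{f,\phi}$ in $C^0$ at uniform exponential rate $3R_1\Delta\tau^n$. Fix $\eta>0$ and choose $n$ with $3R_1\Delta\tau^n<\eta/3$, uniformly for the whole family. Then estimate
\[
\|h_{g,\psi}-h_{f,\phi}\|_0
\le \|h_{g,\psi}-\widetilde\cL_{g,\psi}^n\um\|_0
+\|\widetilde\cL_{g,\psi}^n\um-\widetilde\cL_{f,\phi}^n\um\|_0
+\|\widetilde\cL_{f,\phi}^n\um-h_{f,\phi}\|_0 .
\]
The first and third terms are each below $\eta/3$ by the uniform convergence. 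For the middle term, $\widetilde\cL_{g,\psi}^n\um=\la_{g,\psi}^{-n}\cL_{g,\psi}^n\um$, and by the already-established continuity of $\la_{g,\psi}$ and the $C^0$-continuity of $\cL_{g,\psi}^n\um$ (uniform boundedness of the operators lets one telescope $\cL_{g,\psi}^n-\cL_{f,\phi}^n=\sum_j \cL_{g,\psi}^{j}(\cL_{g,\psi}-\cL_{f,\phi})\cL_{f,\phi}^{n-1-j}$ and bound each summand in $C^0$), this term is below $\eta/3$ once $(g,\psi)$ is close enough to $(f,\phi)$. Finally, since $\mu_{f,\phi}=h_{f,\phi}\nu_{f,\phi}$ and $\nu_{f,\phi}$ is determined by $\cL_{f,\phi}^*\nu=\la\nu$, a short weak$^*$ compactness argument (any accumulation point of $\nu_{g,\psi}$ is a $\la_{f,\phi}$-conformal measure for $(f,\phi)$, hence equals $\nu_{f,\phi}$ by its characterization in Proposition~\ref{p.conformal.measure}) shows $d\mu_{g,\psi}/d\nu_{g,\psi}=h_{g,\psi}$ does converge to $h_{f,\phi}$ in $C^0$.

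\textbf{Main obstacle.} The delicate step is the middle-term estimate: one must show $\|\cL_{g,\psi}^n\um-\cL_{f,\phi}^n\um\|_0\to 0$ even though $\cL_{g,\psi}\to\cL_{f,\phi}$ \emph{fails} in the H\"older operator norm. This is where Lemma~\ref{l.preimagesmatching} and Lemma~\ref{l.aux} are essential — pairing preimages correctly (respecting the combinatorics relative to $\cA$) so that the one-step $C^0$-difference is genuinely controlled by $\|f-g\|_\al+\|\phi-\psi\|_0$ — and where one must be careful that the iteration constants stay uniform over the family, which is exactly what the uniform cone invariance provides.
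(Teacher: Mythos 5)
Your proposal follows essentially the same route as the paper: identify $\log\la_{f,\phi}$ with $\Ptop(f,\phi)$, exploit the uniform cone contraction (hence uniform $R_1,\Delta,\tau$) to make $\tfrac1n\log\|\tilde\cL^n_{f,\phi}\um\|_0\to 0$ uniformly over the family, and then split both the pressure difference and the density difference by a three-term triangular inequality where one term is the finite-$n$ $C^0$-difference of $\cL^{n_0}_{g,\psi}\um$ and $\cL^{n_0}_{f,\phi}\um$. Your telescoping argument for the middle term and the invocation of Lemma~\ref{l.preimagesmatching} simply fill in the paper's ``it is not hard to check'' step, and the closing weak$^*$ remark about $\nu$ is harmless but superfluous, since the displayed triangular inequality already yields $\|h_{g,\psi}-h_{f,\phi}\|_0\to 0$ directly.
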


\begin{proof}
Recall that Proposition~\ref{p.pressure} implies that $P_{\text{top}}(f,\phi)=\log \la_{f,\phi}$ where $\la_{f,\phi}$
is the spectral radius of the operator $\cL_{f,\phi}$. Moreover, it follows from the proof of Corollary \ref{corfast} that
for any $\vr \in \La_{\kappa,\de}$ satisfying $\int \varphi \, d\nu=1$ one has in particular
\begin{equation}\label{eq.C0contt}
\left\|\la^{-n}_{f,\phi}\cL_{f,\phi}^{n}\varphi- \frac{d\mu_{f,\phi}}{d\nu_{f,\phi}}\right\|_{0}  
	\leq  3 R_1 \Delta \tau^n.
\end{equation}
for all $n$. Notice the previous reasoning applies to $\varphi\equiv 1\in\La_{\kappa,\de}$. Moreover, since
the spectral gap property estimates depend only on the constants $L,\si$ and $\deg(f)$ it follows that all transfer
operators $\cL_{\tilde f,\tilde \phi}$ preserve the cone $\La_{\kappa,\de}$ for all pairs $(\tilde f,\tilde\phi)$ and that
the constants $R_1$ and $\De$ can be taken uniform in a small neighborhood $\cU$ of $(f,\phi)$. Furthermore,
one has that $\int \la_{f,\phi}^{-1}\cL_{f,\phi}\; d\nu_{f,\phi}=1$ and so the convergence
$$
\lim_{n \to +\infty} \frac 1 n \log \|{\tilde {\cL^n}}_{\tilde f,\phi}(1) \|_0
	= \lim_{n \to +\infty} \frac 1 n \log\left\| {\la_{\tilde f, \phi}}^{-n} {\cL^n}_{\tilde f , \phi}  (1) \right\|_0 
	= 0
$$
given by Proposition \ref{p.densidade} and Corollary \ref{corfast} can be taken uniform in $\cU$. This is the key
ingredient to obtain the continuity of the topological pressure and density function. Indeed, let $\vep>0$ be fixed
and take $n_0\in \mathbb N$ such that 
$$
\Big |\frac{1}{n_0} \log \|{\cL^{n_0}}_{\tilde f, \phi}(1)\|_0 - \log(\la_{\tilde f, \phi})\Big |
	< \frac\epsilon3.
$$ 
for all $\tilde f  \in \cU$.
Moreover,  using $P_{\text{top}}(f,\phi)=\log \la_{f,\phi}$ by triangular inequality we get
\begin{align*}
\Big | \Ptop(f,\phi) - \Ptop(\tilde f,\phi)\Big |
	& \leq \Big |\frac{1}{n_0} \log \|{\cL^{n_0}}_{\tilde f, \phi}(1)\|_0 - \log(\la_{\tilde f, \phi})\Big | \\
	&+ \Big |\frac{1}{n_0} \log \|{\cL^{n_0}}_{ f, \phi}(1)\|_0 - \log(\la_{ f, \phi})\Big | \\
	& + \Big |\frac{1}{n_0} \log \|{\cL^{n_0}}_{ f, \phi}(1)\|_0 - \frac{1}{n_0} \log \|{\cL^{n_0}}_{ \tilde f, \phi}(1)\|_0 \Big |.
\end{align*}
Now, it is not hard to check that, for $n_0$ fixed, the function $\cU \to C^0(M,\mathbb R)$ given by
$$
\tilde f \mapsto \cL^{n_0}_{\tilde f,\phi} 1
	=\sum_{\tilde f^{n_0}(y)=x} e^{S_{n_0}\phi(y)}
$$
is continuous. Consequently, there exists a neighborhood $\mathcal V\subset \cU$ of $f$ such that 
$|\frac{1}{n_0} \log \|{\cL^{n_0}}_{ f, \phi}(1)\|_0 - \frac{1}{n_0} \log \|{\cL^{n_0}}_{ \tilde f, \phi}(1)\|_0|<\vep/3$
for every $\tilde f\in \mathcal V$. Altogether this proves that $| \Ptop(f,\phi) - \Ptop(\tilde f,\phi)\Big |<\vep$
for all $\tilde f \in \mathcal V$. Since $\vep$ was chosen arbitrary we obtain that both the leading eigenvalue 
and topological pressure functions vary continuously with the dynamics $f$.
Finally, by equation~\eqref{eq.C0contt} above applied to $\varphi\equiv 1$ and triangular inequality 
we obtain that
\begin{align*}
\left\|\frac{d\mu_{\tilde f,\phi}}{d\nu_{\tilde f,\phi}} -  \frac{d\mu_{f,\phi}}{d\nu_{f,\phi}} \right\|_{0} 
	& \leq 6 R_1 \Delta \tau^n 
	+ \left\|\la^{-n}_{\tilde f,\phi}\cL_{\tilde f,\phi}^{n}1- \la^{-n}_{f,\phi}\cL_{f,\phi}^{n}1  \right\|_{0}
\end{align*}
for all $n$. Hence, proceeding as before one can make the right hand side above as close to zero as 
desired provided that $\tilde f$ is sufficiently close to $f$. This proves the continuity of the density function and
finishes the proof of the proposition. 
\end{proof}

We will finish this section with some comments on the non-continuous dependence of the Ruelle-Perron-Frobenius operators, acting on the space of H\"older continuous observables, as a function of the dynamics $f$. 

\begin{remark}\label{rmk.continuity}
Notice first that H\"older continuous observables are Lipschitz continuous 
with respect to the metric $d(\cdot, \cdot)^\al$. Hence, for simplicity we provide below an example of 
discontinuity of the Ruelle-Perron-Frobenius operator $\cL_{f} \colon Lip(M) \to Lip(M)$
with respect to the dynamics $f$, where $Lip(M)$ are the space of continuous observables such that
$$
Lip(f):=\sup_{n\neq y} \frac{|f(x)-f(y)|}{d(x,y)}
	<\infty.
$$
\end{remark}

\begin{example}\label{ex.transfer}
The key idea of the following surprisingly simple example of 
discontinuity of Ruelle-Perron-Frobenius operator with respect to the dynamics
is that the operator of composition $\vr \to \vr \circ g$ acting in the
space of Lipschitz functions does not vary continuously with $g$. 
Consider the expanding dynamics $f_n$ on the circle $S^1 \simeq \re/[-1/2, 1/2)$
given by that $f_n(x)= 2(x+ \frac1{10n}) (\!\!\mod 1)$. Obviously, $f_n$  
converges to $f$, $f(x)= 2 x (\!\!\mod 1)$ in the $C^\infty$-topology.
Now, take a periodic Lipschitz function $\varphi$ in the circle   
such that $\varphi(x)= |x|$ say, for $|x| \leq 1/8$ and
$\varphi(x)= 0$  for $1/2 \geq |x|\geq 1/5$.
Just take the potential $\phi \equiv 0$ and
write $\cL_n$, $\cL$ for the Perron-Frob\"enius operators 
corresponding to $f_n ,f$ respectively.
Therefore,  taking $0< x_n< y_n< 1/10n$, we obtain that
\begin{align*}
\text{Lip}((\cL_n- \cL)(\vr)) 
	& \geq  \frac{|\cL_n(\varphi)(y_n)- \cL_n(\varphi)(x_n) +\cL(\varphi)(x_n)- \cL(\varphi)(y_n)|}{y_n- x_n} \\
	& =  \frac{| |y_n/2 - 1/10n|- |x_n/2- 1/10n|+ |x_n/2|- |y_n/2||}{y_n- x_n} \\
	& = \frac{ |-y_n - x_n|}{y_n- x_n}= 1 = Lip(\varphi).
\end{align*} 
Thus $\cL_n:\text{Lip}(S^1, \re) \to \text{Lip}(S^1, \re)$ 
does not converge to $\cL$ even in the strong operator topology.
In particular, $\cL_n$ does not converge to $\cL$ in the norm topology.
 \end{example}

%%%%%%%%%%%%%%%%%%%%%%%%%%%%%%%%%%%%%%%%
\section{Ruelle-Perron-Frobenius in $C^r(M,\mathbb R)$: \\ spectral gap and strong stability results}\label{s.gapCr}

Throughout this section we assume that  $f$ is a $C^r$-local diffeomorphism ($r\ge 1$)
and the potential $\phi$ belongs to $C^r(M)$. Here we restrict the analysis of the transfer operator to the space 
of smooth observables. 

%%%%%%%%%%%%%%%%%%%%%%%%%%%%%%%%%%
\subsection{Spectral gap for the transfer operator in $C^{r}(M,\mathbb R)$}\label{s.s.conesCr}

Here we shall assume that $f$ is a $C^r$ ($r\ge 1$) local diffeomorphism on a compact manifold $M$ 
satisfying (H1) and (H2) and $\phi\in C^{r}(M,\mathbb R)$ satisfies (P'). In fact,  we require $L\ge 1$ to be 
close to $1$ such that
\begin{equation}\label{eq.updated}
\Xi_r:=e^{\vep_\phi}\frac{q L^r +(\deg(f)-q) \si^{-1}}{\deg(f)}<1,
\end{equation}
which we use as counterpart of \eqref{eq. relation potential} in this differentiable setting.
We prove that the transfer operator 
$\cL_\phi : C^{r}(M,\mathbb R) \to C^{r}(M,\mathbb R)$ has a spectral gap. 
The strategy is to show $\cL_\phi$-invariance of the cones of smooth observables 
$$
\Lambda^{r}_{\kappa}
	=\Big\{\varphi\in C^{r}(M,\mathbb R) : \varphi>0 \text{ and } \frac{\|D^s\varphi\|_0}{\inf \varphi} 
		\leq \frac{\kappa}{c^{(r)}_s} \quad\text{for} \;s=1\dots r  \Big\},
$$
for some constants $c^{(r)}_s$ with $s=1\dots r$ defined recursively using the corresponding constants 
for the cones corresponding to smaller differentiability. The choice of the constants $c^{(r)}_s$ are made in
order to guarantee that observables in $\Lambda^{r}_{\kappa}$ associated to large $\kappa$ belong 
to some cones $\Lambda^{r-i}_{\kappa_i}$ for some large constants $(k_i)_{i=1\dots r-1}$ where 
the Ruelle-Perron-Frobenius operator acts as a contraction in the projective metric.
The precise construction is described in what follows. For $r=1$ just consider
$$
\Lambda^{1}_{\kappa}
	=\Big\{\varphi\in C^{r}(M,\mathbb R) : \varphi>0 \text{ and } \frac{\|D \varphi\|_0}{\inf \varphi} 
		\leq \kappa   \Big\},
$$
which corresponds to the previous cone with $c^{(1)}_1=1$. For $r=2$ we obtain that the cone
$\Lambda^{2}_{\kappa}$ can be written as
$$
\Lambda^{2}_{\kappa}
	=\Big\{\varphi\in C^{2}(M,\mathbb R) : \varphi>0, \; \frac{\|D \varphi\|_0}{\inf \varphi} 
		\leq \frac{(1-\Xi_2) \kappa}{2 e^{\vep_\phi} \max_x \|D^2 f^{-1}(x)\|}  \text{and }  \frac{\|D^2 \varphi\|_0}{\inf \varphi} 
		\leq \kappa  \Big\}.
$$
with $c^{(2)}_2=1$ and $c^{(2)}_1=2 (1-\Xi_2)^{-1} e^{\vep_\phi} \max_x \|D^2 f^{-1}(x)\|$. Assuming that 
we have defined the positive constants $(c^{(r-1)}_s)_{s=1\dots r-1}$ associated to the cones 
$\Lambda^{r-1}_{\kappa}$ of $C^{r-1}$ observables we define the constants  $c^{(r)}_s$ as follows. Set 
\[
\begin{cases}
c^{(r)}_r=1;  \\
c^{(r)}_{r-1}= r! (1-\Xi_r)^{-1} e^{\vep_\phi} \max_{1\le j\le r-1}\max_x \|D^{r-1}f^{-1}(x)\|^j ; \\
c^{(r)}_{r-t} = c^{(r)}_{r-t+1} \cdot c^{(r-1)}_{r-t}, \text{ for } t=2\dots r-1
\end{cases}
\]
Roughly, the choice of $c^{(r)}_{r-1}$ is made in order to guarantee that at most $r!$ terms arising in the
computation of  higher order derivatives of the observable $\cL_\phi\varphi$ are dominated by the term involving
$D^r\varphi$,  while the recursive choice of the constants $c^{(r)}_{s}$ with $s<r$ guarantees that the cones 
corresponding to smaller differentiability are contracted.  Hence, our main result in this section is as follows.

\begin{theorem}\label{t.cone.invarianceCr}
There exists a positive constant $\vep'_\phi>0$ (depending only on $f$ and $r$) such that if $\phi$ satisfies condition
(P') given by
\begin{equation*}
 \sup\phi-\inf\phi<\vep_\phi \quad  \text{and} \quad \max_{s\le r}\|D^s \phi\|_0 <\vep'_\phi 
\end{equation*}
then there are $\kappa_0>0$ and $0< \hat \la< 1$ such that $\cL_\phi(\Lambda^{r}_{\kappa}) \subset \Lambda^{r}_{\hat\la \kappa,\delta} $ for every $\kappa\ge \kappa_0$.
\end{theorem}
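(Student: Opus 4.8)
The plan is to prove the cone invariance $\cL_\phi(\Lambda^r_\kappa)\subset\Lambda^r_{\hat\la\kappa}$ by induction on $r$, following the same line as Theorem~\ref{t.cone.invariance} but now controlling all derivatives up to order $r$. For the base case $r=1$, given $\varphi\in\Lambda^1_\kappa$ one differentiates $\cL_\phi\varphi(x)=\sum_{f(y)=x}e^{\phi(y)}\varphi(y)$ using the inverse branches $y=f^{-1}_i(x)$; the chain rule produces terms of the form $e^{\phi(y)}[D\varphi(y)\cdot Df^{-1}_i(x) + \varphi(y)D\phi(y)\cdot Df^{-1}_i(x)]$. Split the sum over branches into the $q$ branches that may meet $\cA$ (where $\|Df^{-1}_i\|\le L$) and the $\deg(f)-q$ uniformly expanding branches (where $\|Df^{-1}_i\|\le\sigma^{-1}$). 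Using $\|D\varphi\|_0\le\kappa\inf\varphi$, the first kind of term contributes at most $e^{\vep_\phi}\frac{qL+(\deg(f)-q)\sigma^{-1}}{\deg(f)}\kappa\inf(\cL_\phi\varphi)$ (recall $\cL_\phi\varphi\ge\deg(f)e^{\inf\phi}\inf\varphi$), which by \eqref{eq.updated} is $\Xi_1\kappa\inf(\cL_\phi\varphi)$ with $\Xi_1<1$; the second kind of term, involving $D\phi$, is bounded using (P') by roughly $\vep'_\phi L^r\kappa\inf(\cL_\phi\varphi)$, and one chooses $\vep'_\phi$ small enough that $\Xi_1 + (\text{that contribution}) =: \hat\la<1$. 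This gives $\|D(\cL_\phi\varphi)\|_0\le\hat\la\kappa\inf(\cL_\phi\varphi)$, which is exactly $\cL_\phi\varphi\in\Lambda^1_{\hat\la\kappa}$.

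For the inductive step, suppose the constants $c^{(r-1)}_s$ have been fixed so that $\cL_\phi$ contracts the cones $\Lambda^{r-1}_{\kappa}$ of $C^{r-1}$ observables. Given $\varphi\in\Lambda^r_\kappa$, apply the Faà di Bruno / Leibniz formula to $D^s(\cL_\phi\varphi)$ for each $s\le r$. The resulting terms are sums over branches of $e^{\phi(y)}$ times products of derivatives $D^a\varphi(y)$, $D^b\phi(y)$ and derivatives $D^c f^{-1}_i(x)$ of the inverse branch up to order $r$. The terms with $a=s$ (a single factor $D^s\varphi$ times powers of $Df^{-1}_i$) are the "dominant" ones; as in the base case they are bounded, using the branch split and \eqref{eq.updated}, by $\Xi_r\cdot\frac{\kappa}{c^{(r)}_s}\inf(\cL_\phi\varphi)$. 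All remaining terms involve either a lower-order derivative $D^a\varphi$ with $a<s$, or at least one factor $D^b\phi$; the former are controlled by the cone conditions for smaller $s$ — this is precisely why the constants are defined recursively by $c^{(r)}_{r-t}=c^{(r)}_{r-t+1}\cdot c^{(r-1)}_{r-t}$, so that a term containing $D^a\varphi$ with $a<r$ inherits a bound proportional to $\kappa/c^{(r)}_r = \kappa$ after one loses the factor $c^{(r-1)}_a$ — and the latter carry a small factor $\vep'_\phi$. There are at most $r!$ dominant-type terms contributing to $D^{r-1}$, which is why $c^{(r)}_{r-1}$ carries the factor $r!$ and $\max_{1\le j\le r-1}\max_x\|D^{r-1}f^{-1}(x)\|^j$, precisely to absorb these into a single factor of $\frac{\kappa}{c^{(r)}_{r-1}}\inf(\cL_\phi\varphi)$ against $(1-\Xi_r)^{-1}e^{\vep_\phi}$. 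Collecting, one gets $\|D^s(\cL_\phi\varphi)\|_0\le\hat\la\,\frac{\kappa}{c^{(r)}_s}\inf(\cL_\phi\varphi)$ for every $s\le r$, with $\hat\la<1$ depending only on $\Xi_r$ and on $\vep'_\phi$, by choosing $\vep'_\phi$ small.

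The main obstacle is purely combinatorial bookkeeping: tracking, in the higher-order chain-rule expansion of $D^s(\cL_\phi\varphi)$, exactly which terms are "dominant" (pure $D^s\varphi$) versus "subordinate" (lower $D^a\varphi$, or at least one $D^b\phi$ factor), and verifying that the recursively defined constants $c^{(r)}_s$ are tuned so that every subordinate term of order $s$ is dominated by the cone inequality at some order $<s$ (already established for $C^{r-1}$ by induction) without destroying the smallness coming from $\Xi_r<1$ and $\vep'_\phi\ll1$. Once the branch split (using (H1), (H2)), the bound $\cL_\phi\varphi\ge\deg(f)e^{\inf\phi}\inf\varphi$, and the inductive contraction of $\Lambda^{r-1}_\kappa$ are in place, the estimate for each individual $s$ is a routine application of condition (P') and inequality \eqref{eq.updated}; what requires care is only the uniform choice of $\vep'_\phi$ and $\hat\la$ across all $s\le r$, and verifying the recursion for $c^{(r)}_s$ indeed closes.
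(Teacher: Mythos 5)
Your proposal is correct and follows essentially the same route as the paper: prove the case $r=1$ by splitting the branch sum into the $q$ possibly-contracting branches and the $\deg(f)-q$ uniformly expanding ones, use $\inf\cL_\phi\varphi\ge\deg(f)e^{\inf\phi}\inf\varphi$ and \eqref{eq.updated} to get the dominant contribution $\Xi_1\kappa$, and absorb the $D\phi$-terms via (P') with $\vep'_\phi$ small; then proceed inductively, expanding $D^s(\cL_\phi\varphi)$ by the chain rule, isolating the single "dominant" term with $D^s\varphi$ (again bounded by $\Xi_r\cdot\kappa/c^{(r)}_s$), and using the cone inequalities at lower orders together with the recursive choice $c^{(r)}_{r-t}=c^{(r)}_{r-t+1}c^{(r-1)}_{r-t}$ and the $r!$-counting to absorb the at most $r!$ subordinate terms involving $D^a\varphi$ with $a<s$, with the $D^b\phi$-terms again made negligible by (P'). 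This matches the paper's proof (which also does $r=1$, $r=2$ explicitly and then argues by recursion), including the identification of the combinatorics of chain-rule terms as the only nontrivial bookkeeping; the paper itself leaves the general step at the same level of detail as your sketch, so no gap.
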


\begin{proof}
We shall prove the theorem recursively on the differentiability $r$. First set $r=1$ and consider 
$\varphi\in \Lambda^{1}_{ \kappa}$ for $\kappa>0$ large. Given $x\in M$ let
$(x_j)_{j}$ denote the set of preimages by $f$ of the point $x$ and 
denote by $f_i^{-1}$ corresponding the local inverse branch for the function $f$ in a neighborhood of $x$ with $f_i(x_i)=x$.  It is not hard to check that $|\cL_\phi \varphi(x)|\leq e^{\vep_\phi} \inf |\cL_\phi \varphi |$
for every $x\in M$.
Moreover, 
\begin{align}\label{eq.chainC1}
D(\cL_\phi \varphi)(x)
	& = \sum_{j= 1}^{\deg(f)} e^{\phi(x_j)} D\varphi(x_j) Df_j^{-1}(x)  
	+ \sum_{j= 1}^{\deg(f)}   \varphi(x_j) e^{\phi(x_j)} D\phi (x_j) Df_j^{-1}(x)
\end{align}
and, consequently, $\| D(\cL_\phi \varphi)(x))\|$ is bounded by
\begin{align*}
\sum_{j= 1}^{\deg(f)} | e^{\phi(x_j)}|   \left\| D\varphi(x_j) Df_j^{-1}(x)  \right\| 
	  + \sum_{j= 1}^{\deg(f)}   |\varphi(x_j) e^{\phi(x_j)}|  \left\| D\phi (x_j) Df_j^{-1}(x) \right\|. 
\end{align*}
By our assumptions  (H1) and (H2) it follows that the isomorphism $\|D f_j^{-1}(\cdot)\| \leq L$ 
for $j=1\dots q$ and is indeed a contraction for $j>q$. Thus, using (P') and that 
$\sup\varphi \leq \inf\varphi+ \|D\varphi\|_0 \diam (M)$ we get
\begin{align*}
\frac{\| D(\cL_\phi \varphi)(x))\|}{\inf  |\cL_\phi \varphi|}
	& \leq \frac{\sum_{j= 1}^{q} L | e^{\phi(x_j)}|   \left\| D\varphi(x_j) \right\|_0   
	+ \sum_{j>q } \sigma^{-1} | e^{\phi(x_j)}|   \left\| D\varphi(x_j)  \right\|_0}
	{ \deg(f) e^{\inf \phi} \inf \varphi }\\
	& + \frac{\sum_{j= 1}^{q} L |\varphi(x_j) e^{\phi(x_j)}|   \left\| D\phi(x_j) \right\|_0   
	+ \sum_{j>q} \sigma^{-1} |\varphi(x_j) e^{\phi(x_j)}|   \left\| D\phi(x_j)  \right\|_0}
	{ \deg(f) e^{\inf \phi} \inf \varphi }   \\
	& \leq e^{\vep_\phi} \;  \frac{q L + \si^{-1} (\deg(f)-q) }{\deg(f)} \; \frac{\| D\varphi\|_0}{\inf \varphi}  \\
	&  + e^{\vep_\phi} \| D\phi(x_j)  \|_0  \frac{\sup\varphi}{\inf \varphi} \; \frac{q L + \si^{-1} (\deg(f)-q) }{\deg(f)} \\
	& \leq \Xi_1 \cdot \kappa
	+ \Xi_1\, \vep'_\phi \; (1 + \|D\varphi\|_0\diam(M))
\end{align*}
which can be taken smaller than $\tilde \la \, \kappa $, for some constant $0<\tilde \la<1$ by our choice
of $\vep_\phi$ in \eqref{eq.updated} provided that $\vep'_\phi$ is sufficiently small. In consequence we 
obtain that $\| D(\cL_\phi \varphi)\|_0 \leq \tilde \la \kappa \, \inf |\cL_\phi \varphi|$, which proves
the theorem in the case that $r=1$.

We now consider the case $r=2$. Fix $\kappa>0$ and consider $\varphi\in \Lambda^{2}_{ \kappa}$.
Differentiating \eqref{eq.chainC1} by means of the chain rule we obtain sums involving the seven terms 
\begin{align*}
& D^2\phi (x_j) [D f_j^{-1}(x)]^2 e^{\phi(x_j)} \varphi(x_j)\\
& D\phi(x_j) D^2 f_j^{-1}(x) e^{\phi(x_j)} \varphi(x_j)\\
& D\phi(x_j) D f_j^{-1}(x) e^{\phi(x_j)} D\varphi(x_j) Df_j^{-1}(x)\\
& D\phi(x_j) D f_j^{-1}(x) e^{\phi(x_j)} D\phi(x_j) Df_j^{-1}(x)\\
& e^{\phi(x_j)} D\phi(x_j) Df_j^{-1}(x) D\varphi(x_j) Df_j^{-1}(x) \\
& e^{\phi(x_j)} D^2\varphi (x_j) [Df_j^{-1}(x)]^2\\
& e^{\phi(x_j)} D\varphi (x_j) D^2f^{-1}_j(x).
\end{align*}
Our assumption $\max_{s\le r}\|D^s \phi\|_0 <\vep'_\phi$ with $\vep'_\phi>0$ small implies that all  but  the last two 
previous terms can be taken neglectable.
In consequence, proceeding as before we conclude that there exists a uniform constant $C>0$ depending 
only on $f$ such that 
\begin{align*}
\frac{\| D^2(\cL_\phi \varphi)(x))\|}{\inf |\cL_\phi \varphi |}
	& \leq C \vep'_\phi
		+ e^{\vep_\phi} \;  \frac{\|D^2\varphi\|_0}{\inf \varphi}  \; 
		\frac{q L^2 + \si^{-2} (\deg(f)-q) }{\deg(f)}  \\ 
	&	+ e^{\vep_\phi} \;  \frac{\|D\varphi\|_0}{\inf \varphi} \, \max_{x\in M} \|D^2 f^{-1}(x)\| \\
	& \leq C \vep'_\phi + \Xi_2 \; \kappa 
		+ \frac{1-\Xi_2}{2}\; \kappa
\end{align*}
which can be again taken smaller than $\tilde \la \, \kappa $, for some constant $0<\tilde \la<1$, provided that
$\kappa$ is large enough and $\vep'_\phi$ is small. This estimate, which involves the information on the smaller
derivaties, proves the strict invariance of the cone $\Lambda^2_\kappa$ under the operator $\cL_\phi$.
The complete statement in the theorem follows by completely analogous computations for the $s$-derivatives of 
$\cL_\phi g$, with  $2< s\le r$. In fact,  the remaining of the proof can be obtained recursively for $\ell+1$ using 
previous information or $s\in\{1,2, \dots,  \ell\}$ by analogous computations of higher order derivatives using the 
chain rule  and estimating dominating terms as above. In fact the number of terms associated containing the
derivatives $D^s\varphi$, $s=1\dots r$ are clearly less than $r!$ and, by definition of the cones, each of such 
terms is bounded by $(1-\Xi_r)/r!$. Then if $\vep'_\phi$ is small proceeding as above we get that $\Lambda^r_\kappa$ is strictly preserved by $\cL_\phi$, 
which proves the theorem.
\end{proof}

Again we use that the smaller cone has finite diameter in the projective metric in the
case of the cones for differentiable observables, whose proof can be simply adapted from the one of
Proposition~\ref{propdiam}. For that reason we shall omit its proof.

\begin{proposition}
Given $0< \hat\la< 1$,  the cone $\Lambda^{r}_{\hat\la \kappa,\delta}$ has finite $\Lambda^r_{\kappa}$-diameter.
\end{proposition}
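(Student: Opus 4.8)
The plan is to follow the proof of Proposition~\ref{propdiam}, now keeping track of the $r$ constraints $\|D^s\varphi\|_0\le (\kappa/c^{(r)}_s)\inf\varphi$ that define $\Lambda^r_\kappa$ in place of the single local H\"older constraint. The first thing I would record is that every $\varphi\in\Lambda^r_{\hat\la\kappa}$ satisfies $\sup\varphi\le R\,\inf\varphi$ with $R:=1+\hat\la\kappa\,\diam(M)/c^{(r)}_1$, since integrating $D\varphi$ along a geodesic joining a maximum to a minimum of $\varphi$ gives $\sup\varphi-\inf\varphi\le \|D\varphi\|_0\,\diam(M)\le (\hat\la\kappa/c^{(r)}_1)\,\diam(M)\,\inf\varphi$. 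This plays the role of the estimate $\sup\varphi\le[1+m\hat\la\kappa(\diam M)^\alpha]\inf\varphi$ used in Proposition~\ref{propdiam}.

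Next, for $\varphi,\psi\in\Lambda^r_{\hat\la\kappa}$ I would estimate directly the quantities entering the projective metric, $A_\kappa(\varphi,\psi)=\sup\{t>0:\psi-t\varphi\in\Lambda^r_\kappa\cup\{0\}\}$ and $B_\kappa(\varphi,\psi)=\inf\{t>0:t\varphi-\psi\in\Lambda^r_\kappa\cup\{0\}\}$. For the lower bound on $A_\kappa$: whenever $0<t\le \frac{1-\hat\la}{R+\hat\la}\cdot\frac{\inf\psi}{\inf\varphi}$ one has, for each $s=1,\dots,r$,
\begin{align*}
\|D^s(\psi-t\varphi)\|_0
	&\le \|D^s\psi\|_0+t\|D^s\varphi\|_0
	\le \frac{\hat\la\kappa}{c^{(r)}_s}\big(\inf\psi+t\,\inf\varphi\big) \\
	&\le \frac{\kappa}{c^{(r)}_s}\big(\inf\psi-tR\,\inf\varphi\big)
	\le \frac{\kappa}{c^{(r)}_s}\,\inf(\psi-t\varphi),
\end{align*}
while at the same time $\psi-t\varphi\ge \inf\psi-t\sup\varphi\ge \inf\psi-tR\,\inf\varphi>0$ pointwise; hence $\psi-t\varphi\in\Lambda^r_\kappa$, so that $A_\kappa(\varphi,\psi)\ge \frac{1-\hat\la}{R+\hat\la}\cdot\frac{\inf\psi}{\inf\varphi}$. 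A symmetric computation with $t\varphi-\psi$ in place of $\psi-t\varphi$ shows that $t\varphi-\psi\in\Lambda^r_\kappa$ as soon as $t\ge \frac{R+\hat\la}{1-\hat\la}\cdot\frac{\inf\psi}{\inf\varphi}$ (the positivity $t\varphi-\psi>0$ being automatic since $\frac{R+\hat\la}{1-\hat\la}\ge R$), whence $B_\kappa(\varphi,\psi)\le \frac{R+\hat\la}{1-\hat\la}\cdot\frac{\inf\psi}{\inf\varphi}$.

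Combining the two bounds, the factors $\inf\psi/\inf\varphi$ cancel and
\[
\Theta_\kappa(\varphi,\psi)=\log\frac{B_\kappa(\varphi,\psi)}{A_\kappa(\varphi,\psi)}\le 2\log\Big(\frac{R+\hat\la}{1-\hat\la}\Big)
\]
uniformly over $\varphi,\psi\in\Lambda^r_{\hat\la\kappa}$, which is exactly the claimed finiteness of the $\Lambda^r_\kappa$-diameter of $\Lambda^r_{\hat\la\kappa}$ (and recovers the shape $2\log\frac{1+\hat\la}{1-\hat\la}$ of the bound in Proposition~\ref{propdiam}, with the $\diam(M)$-dependent term now absorbed into $R$). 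The only point requiring mild care, compared with the $C^\alpha$ setting, is that all $r$ derivative constraints must survive the subtraction $\psi-t\varphi$ simultaneously; but since they share the common slack factor $\hat\la<1$, the single threshold $t\le\frac{1-\hat\la}{R+\hat\la}\cdot\frac{\inf\psi}{\inf\varphi}$ controls them all at once, so no genuinely new difficulty arises and the proof is adapted routinely from that of Proposition~\ref{propdiam}.
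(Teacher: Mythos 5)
Your proof is correct, and the strategy is the same as the paper's adaptation of Proposition~\ref{propdiam}: bound $\sup\varphi/\inf\varphi$ uniformly on the smaller cone, then bound $A_\kappa$ from below and $B_\kappa$ from above to get a uniform bound on $\Theta_\kappa=\log(B_\kappa/A_\kappa)$. The only (welcome) stylistic difference is that you work directly from the abstract characterizations $A_\kappa(\varphi,\psi)=\sup\{t:\psi-t\varphi\in\Lambda^r_\kappa\cup\{0\}\}$ and $B_\kappa(\varphi,\psi)=\inf\{t:t\varphi-\psi\in\Lambda^r_\kappa\cup\{0\}\}$ rather than first deriving an explicit $C^r$ analogue of the metric formula in Lemma~\ref{l.cone-metric}, which is cleaner here since only one-sided estimates are needed; this also makes transparent your key observation that the single slack factor $\hat\la$ controls all $r$ derivative constraints at once.
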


In the next subsection we will use Birkhoff's theorem to deduce good spectral properties for the 
Ruelle-Perron-Frobenius operator.

%%%%%%%%%%%%%%%%%%%%%%%%%%%%%%%%%%%%%%%%%%%%
\subsection{Strong stability properties}

Here we establish the statistical and spectral stochastic stability results. The discussion in 
Remark~\ref{rmk.continuity} shows that this property was far from being immediate. 
In the space of $C^r$ observables ($r \geq 1$) the Perron-Frobenius vary continuously with 
the dynamics in the strong (pointwise) operator topology. However, it can also be shown that in general such operators do not vary continuously in norm in the space of bounded  linear operators.
In fact, the stability results presented here will follow from the careful study of the spectral properties
of the transfer operators and will be consequence  of the uniformity of the gap spectral for all
close dynamical systems and potentials.

Throughout this subsection let $\cF^r$ be a family of $C^r$, $r \geq 1$ local diffeomorphisms
and let $\mathcal W^r$ be some family of $C^r$ potentials satisfying (H1), (H2) and (P') with uniform constants. 
Let $B(C^r(M,\mathbb R), C^r(M,\mathbb R))$ denote the space of bounded linear operators on $C^r(M,\mathbb R)$ endowed with the strong operator topology.

\begin{proposition}\label{prox}
The Ruelle-Perron-Frobenius operator function
\[
\begin{array}{ccc}
\cF^r \times \cW^r  &  \to & B(C^r(M,\mathbb R), C^r(M,\mathbb R)) \\
   (f, \phi) & \mapsto & \cL_{f,\phi}
\end{array}
\]
is continuous in the $C^r$-topology.
\end{proposition}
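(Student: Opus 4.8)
The plan is to reduce the statement to the claim that for each fixed $\varphi\in C^r(M,\mathbb R)$ the map $(f,\phi)\mapsto \cL_{f,\phi}\varphi$ is continuous from $\cF^r\times\cW^r$ into $C^r(M,\mathbb R)$, since this is precisely what continuity in the strong operator topology means. So I fix $(f,\phi)\in\cF^r\times\cW^r$ and $\varphi\in C^r(M,\mathbb R)$, and let $(g,\psi)$ range over a small $C^r$-neighborhood of $(f,\phi)$.

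Since $f$ is a $C^r$ local diffeomorphism and $M$ is compact, one may fix a finite open cover $\{V_1,\dots,V_N\}$ of $M$ over which $f$ admits $\deg(f)$ well-defined $C^r$ inverse branches $f^{-1}_{k,1},\dots,f^{-1}_{k,\deg(f)}\colon V_k\to M$. After slightly shrinking the cover if necessary, there is a $C^r$-neighborhood $\mathcal V$ of $f$ in $\cF^r$ such that every $g\in\mathcal V$ has $\deg(g)=\deg(f)$ and admits, over each $V_k$, $C^r$ inverse branches $g^{-1}_{k,i}\colon V_k\to M$ which converge to $f^{-1}_{k,i}$ in $C^r(V_k)$ as $g\to f$; this continuous dependence of the inverse branches on the map is a consequence of the inverse function theorem with parameters, the $C^0$-counterpart being exactly the preimage-matching of Lemma~\ref{l.preimagesmatching} together with the local constancy of the degree.

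On each $V_k$ one has the finite-sum representation
$$
\cL_{g,\psi}\varphi(x)=\sum_{i=1}^{\deg(f)} e^{\psi(g^{-1}_{k,i}(x))}\,\varphi(g^{-1}_{k,i}(x)),
$$
so it suffices to prove that each summand converges in $C^r(V_k)$ to the corresponding summand of $\cL_{f,\phi}\varphi$. This follows by combining three elementary continuity properties of the $C^r$ category, applied on the compact closures of the $V_k$: composition $(u,h)\mapsto u\circ h$ is continuous, so $\varphi\circ g^{-1}_{k,i}\to\varphi\circ f^{-1}_{k,i}$ and $\psi\circ g^{-1}_{k,i}\to\phi\circ f^{-1}_{k,i}$ in $C^r$ (the latter using also $\psi\to\phi$ in $C^r$); the pointwise exponential $u\mapsto e^u$ is continuous from $C^r$ to $C^r$; and multiplication $C^r\times C^r\to C^r$ is continuous. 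Hence $e^{\psi\circ g^{-1}_{k,i}}\cdot(\varphi\circ g^{-1}_{k,i})\to e^{\phi\circ f^{-1}_{k,i}}\cdot(\varphi\circ f^{-1}_{k,i})$ in $C^r(V_k)$; summing over the finitely many $i$ and then over the finitely many $k$ (the $C^r$-norm on $M$ is comparable to the maximum of the $C^r$-norms over a finite cover) yields $\|\cL_{g,\psi}\varphi-\cL_{f,\phi}\varphi\|_r\to 0$. Since $\varphi$ was arbitrary, this proves the proposition.

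The one point requiring genuine care is the uniform choice of the domains $V_k$: one must ensure that all $g$ in a fixed $C^r$-neighborhood of $f$ possess inverse branches defined on one and the same open cover of $M$ and depending continuously on $g$ in $C^r$. This is where the compactness of $M$ and the uniform local-homeomorphism control (Lemma~\ref{l.aux} and the reasoning behind Lemma~\ref{l.preimagesmatching}) enter; everything after that is the continuity of composition, product and exponential in the $C^r$ category applied to a finite sum. Note that condition (P') is used here only to guarantee that $\cL_{f,\phi}$ is a well-defined bounded operator on $C^r(M,\mathbb R)$ and plays no further role; in particular the failure of \emph{norm} continuity observed in Example~\ref{ex.transfer} and Remark~\ref{rmk.continuity} is in no contradiction with the strong-operator-topology statement proved here.
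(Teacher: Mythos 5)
Your proposal is correct and follows essentially the same route as the paper: fix $\varphi$, write $\cL_{g,\psi}\varphi$ as a finite sum over inverse branches on a suitable cover of $M$, and show each summand converges to the corresponding summand of $\cL_{f,\phi}\varphi$ in $C^r$, which is exactly continuity in the strong operator topology. Your version is more scrupulous than the paper's (explicitly fixing the common cover, invoking the inverse function theorem with parameters for the $C^r$-dependence of inverse branches, and citing the continuity of composition, product, and exponential in the $C^r$ category), and you correctly observe both that (P') plays no real role here and that this does not conflict with the failure of norm-continuity in Example~\ref{ex.transfer}.
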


\begin{proof}
Let  $(f,\phi), (\tilde f,\tilde \phi) \in \cF^r\times \cW^r$ be given. Then for any fixed $\vr \in C^r(M, \re)$ and $x\in M$
we get that 
$$
\|\cL_{\tilde f, \tilde \phi}  (\vr)  (x) -   \cL_{\tilde f, \tilde \phi}  (\vr)  (x)\| \leq 
\sum_{j = 1}^{\deg(f)} \|  \vr (\tilde f_i^{-1} ) (x)  \cdot e^{\tilde \phi( \tilde f_i^{-1} (x) ) }   
	-  \vr ( f_i^{-1} ) (x)  \cdot e^{ \phi( f_i^{-1} (x) ) }\|
$$
where, as before, $f_i^{-1}$ denote the inverse branches of $f$ at $x$. Moreover, the right hand
side above goes to zero independently of $x$ as  $(\tilde f,\tilde \phi)$ converges to $(f,\phi)$ in 
the $C^1$-topology. Furthermore, $\| D  \cL_{\tilde f, \tilde \phi}  (\vr)  (x) - D  \cL_{\tilde f, \tilde \phi}  (\vr)  (x)   \|$
is bounded by
\begin{align*}
\sum_{j = 1}^{\deg(f)} \| D( \vr (\tilde f_i^{-1} ) (x)  \cdot e^{\tilde \phi( \tilde f_i^{-1} (x) ) })   
	- D( \vr ( f_i^{-1} ) (x)  \cdot e^{ \phi( f_i^{-1} (x) ) })\|,
\end{align*}
which also converges uniformly  to zero by standard triangular argument as the element 
provided that $(\tilde f,\tilde \phi)$ converge to $(f,\phi)$. We note that analogous computations hold for
higher order derivatives which lead to the statement of the proposition.
\end{proof}

Now we deduce our functional analysis approach to deduce the important continuity of the topological 
pressure, a fact unknown in \cite{VV10}.

\begin{proposition} \label{propre}
The topological pressure function $\cF^r \times \cW^r \ni (f, \phi)  \to P_{\text{top}}(f,\phi)$ is
continuous in the $C^r$-topology, for $r \geq 1$. Moreover, the densities $\frac{d\mu_{f, \phi}}{d\nu_{f, \phi}}$ 
vary continuously with respect to $(f, \phi) \in \cF^r \times \cW^r$.
\end{proposition}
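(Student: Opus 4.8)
The plan is to run the argument of Proposition~\ref{prop:C0cont} one level higher, now exploiting the spectral gap in $C^r(M,\mathbb R)$ provided by Theorem~\ref{t.cone.invarianceCr}, the finite $\Lambda^r_\kappa$-diameter of $\Lambda^r_{\hat\la\kappa}$, and Birkhoff's Theorem~\ref{t.Birkhoff}. The point of departure is that every constant entering the proof of Theorem~\ref{t.cone.invarianceCr} and of the diameter bound is manufactured only from $L$, $\si$, $q$, $\deg(f)$, $\vep_\phi$, $\vep'_\phi$ and $r$, which are uniform over $\cF^r\times\cW^r$. Hence there are $\kappa\ge1$, $0<\tau<1$ and $C>0$ and a neighbourhood $\cU$ of a given $(f,\phi)\in\cF^r\times\cW^r$ so that \emph{every} operator $\cL_{\tilde f,\tilde\phi}$ with $(\tilde f,\tilde\phi)\in\cU$ preserves $\Lambda^r_\kappa$ and is a $\tau$-contraction on it in the projective metric. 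Repeating the arguments of Proposition~\ref{p.densidade} and Corollary~\ref{corfast} with $\Lambda^r_\kappa$ in place of $\Lambda_{\kappa,\de}$, and applying them to the normalized function $1\in\Lambda^r_\kappa$ (note $\int 1\,d\nu_{\tilde f,\tilde\phi}=1$), one obtains, writing $\lambda_{\tilde f,\tilde\phi}=r(\cL_{\tilde f,\tilde\phi})$ and $h_{\tilde f,\tilde\phi}=d\mu_{\tilde f,\tilde\phi}/d\nu_{\tilde f,\tilde\phi}$,
\begin{equation*}
\bigl\|\lambda_{\tilde f,\tilde\phi}^{-n}\cL_{\tilde f,\tilde\phi}^{n}1-h_{\tilde f,\tilde\phi}\bigr\|_r\le C\tau^n,\qquad n\ge1,\ (\tilde f,\tilde\phi)\in\cU,
\end{equation*}
with $C$ independent of $n$ and of $(\tilde f,\tilde\phi)$.

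For the continuity of the pressure I would use $\Ptop(\tilde f,\tilde\phi)=\log\lambda_{\tilde f,\tilde\phi}$ from Proposition~\ref{p.pressure}, together with the fact that the displayed estimate forces $\tfrac1n\log\|\cL^n_{\tilde f,\tilde\phi}1\|_0\to\log\lambda_{\tilde f,\tilde\phi}$ \emph{uniformly} on $\cU$, since $\|\lambda_{\tilde f,\tilde\phi}^{-n}\cL^n_{\tilde f,\tilde\phi}1\|_0$ stays trapped between two positive constants. Given $\vep>0$, fix $n_0$ with $|\tfrac1{n_0}\log\|\cL^{n_0}_{\tilde f,\tilde\phi}1\|_0-\log\lambda_{\tilde f,\tilde\phi}|<\vep/3$ for all $(\tilde f,\tilde\phi)\in\cU$. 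A straightforward induction on $n_0$, using Proposition~\ref{prox} and the uniform bound $\|\cL_{\tilde f,\tilde\phi}\|\le\deg(f)\,e^{\sup|\tilde\phi|}$, shows that $(\tilde f,\tilde\phi)\mapsto\cL^{n_0}_{\tilde f,\tilde\phi}1$ is continuous into $C^r(M,\mathbb R)$, hence so is $(\tilde f,\tilde\phi)\mapsto\tfrac1{n_0}\log\|\cL^{n_0}_{\tilde f,\tilde\phi}1\|_0$; a triangle inequality then yields $|\Ptop(f,\phi)-\Ptop(\tilde f,\tilde\phi)|<\vep$ on a smaller neighbourhood, giving continuity of the pressure and of the leading eigenvalue in the $C^r$-topology.

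For the densities I would combine the uniform exponential estimate with the continuity, for each fixed $n$, of $(\tilde f,\tilde\phi)\mapsto\lambda_{\tilde f,\tilde\phi}^{-n}\cL_{\tilde f,\tilde\phi}^{n}1$ in the $C^r$-norm (which follows from the same induction as above together with the just-proved continuity of $\lambda_{\tilde f,\tilde\phi}$). Given $\vep>0$, pick $n$ with $2C\tau^n<\vep/2$ and a neighbourhood of $(f,\phi)$ on which $\|\lambda_{\tilde f,\tilde\phi}^{-n}\cL_{\tilde f,\tilde\phi}^{n}1-\lambda_{f,\phi}^{-n}\cL_{f,\phi}^{n}1\|_r<\vep/2$; then
\begin{equation*}
\bigl\|h_{\tilde f,\tilde\phi}-h_{f,\phi}\bigr\|_r\le 2C\tau^n+\bigl\|\lambda_{\tilde f,\tilde\phi}^{-n}\cL_{\tilde f,\tilde\phi}^{n}1-\lambda_{f,\phi}^{-n}\cL_{f,\phi}^{n}1\bigr\|_r<\vep,
\end{equation*}
which is the asserted $C^r$-continuity of $(f,\phi)\mapsto d\mu_{f,\phi}/d\nu_{f,\phi}$.

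The main obstacle is not any individual estimate but securing the uniformity, across the neighbourhood $\cU$, of the entire spectral-gap package — cone invariance, finite projective diameter, contraction rate, and the resulting constant $C$ — which is precisely what the hypothesis that $\cF^r$ and $\cW^r$ satisfy (H1), (H2) and (P') \emph{with uniform constants} is designed to deliver. A secondary care point is that $\cL_{\tilde f,\tilde\phi}$ does \emph{not} vary continuously in operator norm, only strongly (cf. Remark~\ref{rmk.continuity} and Proposition~\ref{prox}); this is why the argument must freeze the number of iterates $n_0$ (resp. $n$) before invoking Proposition~\ref{prox}, and let the uniform exponential contraction absorb the tail.
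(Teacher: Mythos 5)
Your proposal is correct and follows essentially the same route as the paper's proof: both fix a neighbourhood of $(f,\phi)$ on which the cone $\Lambda^r_\kappa$ is uniformly preserved with a uniform projective contraction rate (hence a uniform exponential bound $\|\lambda^{-n}\cL^n 1-h\|_r\le C\tau^n$), identify $\Ptop=\log\lambda$ via Proposition~\ref{p.pressure}, and then combine the uniform tail estimate with the continuity (Proposition~\ref{prox}) of the fixed-$n$ map $(\tilde f,\tilde\phi)\mapsto\cL^n_{\tilde f,\tilde\phi}1$ through a triangle inequality. The only cosmetic difference is that you measure the convergence of $\tfrac1n\log\|\cL^n 1\|$ in the $C^0$-norm whereas the paper states it in the $C^r$-norm, but the two are interchangeable here since the iterates stay trapped between positive constants in $C^0$ and bounded in $C^r$.
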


\begin{proof}
This proof goes along the same lines of the proof of Proposition~\ref{prop:C0cont}. For that reason
we will prove the result by focusing on the main differences. First notice that $\la_{f,\phi}$
is the leading eigenvalue and spectral radius of the operator $\cL_{f,\phi}$ acting in any space of 
the Banach spaces $C^r$ with $r \geq 0$. Now, using once more that all transfer operators associated
to all $(\hat f, \hat \phi)$ in some neighborhood $\cU$ of $(f, \phi)$ preserve the same cone of functions 
we obtained, following Proposition~\ref{p.densidade} and Corollary~\ref{corfast}, that
the limit 
$$
\lim_{n \to +\infty} \frac 1 n \log \|{\tilde {\cL^n}}_{\hat f,\hat \phi}(1) \|_r
	= \lim_{n \to +\infty} \frac 1 n \log\big \| \la_{f,\phi}^{-n} \, {\cL^n}_{\hat f ,\hat \phi} (1) \big \|_r 
	= 0
$$
is uniform for all $(\hat f, \hat \phi)$ in some neighborhood $\cU$ of $(f, \phi)$. Therefore, by 
standard triangular inequality together with the continuity of the transfer operators in the $C^r$-strong
topology it follows that given $\epsilon > 0$ there exists $n_0$ such that 
\begin{align*}
\Big | \Ptop(f,\phi) - \Ptop(\hat f,\hat\phi)\Big |
	& \leq \Big |\frac{1}{n_0} \log \|{\cL^{n_0}}_{\hat f, \hat\phi}(1)\|_r - \log(\la_{\hat f, \hat\phi})\Big | \\
	&+ \Big |\frac{1}{n_0} \log \|{\cL^{n_0}}_{ f, \phi}(1)\|_r - \log(\la_{ f, \phi})\Big | \\
	& + \Big |\frac{1}{n_0} \log \|{\cL^{n_0}}_{ f, \phi}(1)\|_r - \frac{1}{n_0} \log \|{\cL^{n_0}}_{ \hat f, \hat\phi}(1)\|_r 
	\Big |< \epsilon
\end{align*}
as $(\hat f,\hat\phi)$ converges to $(f,\phi)$. This argument shows that the leading
eigenvalue, thus the topological pressure, vary continuously. Proceeding as in the proof of Proposition~\ref{p.densidade}, noticing that $C^r(M,\mathbb R)\subset C^0(M,\mathbb R)$ and 
$\int \la^{-1}_{f,\phi}\cL_{f,\phi}\;d\nu=1$, one obtains from the contraction on the projective metric 
$\Theta_\kappa$that
\begin{align*}
\left\|\frac{d\mu_{\hat f,\hat\phi}}{d\nu_{\hat f,\hat \phi}} -  \frac{d\mu_{f,\phi}}{d\nu_{f,\phi}} \right\|_{r} 
	& \leq 6 R_1 \Delta \tau^n 
	+ \left\|\la^{-n}_{\hat f,\hat \phi}\cL_{\hat f,\hat \phi}^{n}1- \la^{-n}_{f,\phi}\cL_{f,\phi}^{n}1  \right\|_{r}
\end{align*}
where $R_1$ is a uniform upper bound for the $C^0$-norm of the iterates $\la^{-n}_{f,\phi}\cL^n_{f,\phi} 1$
in a neighborhood of $(f,\phi)$, the constant $\De$ is the diameter of the cone $\La_{\kappa}^r$, and $0<\tau<1$. 
Using the continuity of the transfer operators given in Proposition~\ref{prox} then for any fixed $n$ the last 
expression in the right hand side above can be made arbitrarily small provided that $(\hat f,\hat\phi)$
is sufficiently close to $(f,\phi)$. This proves the continuity of the density function and finishes the proof
of the proposition. 
\end{proof}

We are now in a position to prove that the equilibrium states are strongly stable under
deterministic perturbations.

\begin{proof}[Proof of Theorem~\ref{Thm.Statistical}]
The continuity of topological pressure given by Proposition~\ref{propre} together with
Theorem D in \cite{VV10} that the conformal measures $\nu_{f_n, \phi_n}$ converge to $\nu_{f, \phi}$ in the
weak$^*$ topology as $(f_n, \phi_n)$ goes to $(f, \phi)$ in the $C^r$ topology. Now, using that 
$\frac{d\mu_{f,\phi}}{d\nu_{f,\phi}}$ is $C^r$ and  also varies continuously in the $C^r$ norm with 
$(f, \phi) \in\cF^r \times \cW^r$ it follows that the equilibrium state $\mu_{f, \phi}$ also varies continuously in the
weak$^*$ topology, which completes the proof of the theorem.
\end{proof}

Finally we derive the strong stochastic stability of the spectra.
Consider any family $\theta_\vep$, $0< \vep \le 1$  of probability measures in $\cF^r\times \cW^r$ such that
its support $\supp\theta_\vep$ is contained in a neighborhood $V_\vep(f,\phi)$ of $(f,\phi)$ depending 
monotonically on $\vep$ and satisfying
$
\bigcap_{0<\vep\le 1} V_\vep(f,\phi) =\{(f,\phi)\}.
$
We refer to $(\theta_\vep)_\vep$ as an arbitrary random perturbation of $(f,\phi)\in \cF^r\times \cW^r$.  
We first prove that the stochastic transfer operator $\cL_\epsilon:C^r(M,\mathbb R) \to C^r(M,\mathbb R)$ given by 
\begin{equation}\label{eq.stoRPF}
\cL_\epsilon( \vr)= \int \cL_{\tilde f, \tilde \phi} \vr \; d\Theta_{\epsilon}(\tilde f,\tilde \phi)
\end{equation}
is well defined and preserves a cone of $C^r$-observables.

\begin{lemma}
The stochastic transfer operator operator $\cL_\epsilon$ defined in \eqref{eq.stoRPF}
is well defined. Moreover, there exists $0< \hat \la < 1$  so that  
$\cL_\vep(\La^r_\kappa)\subset \La^r_{\hat \la \kappa}$ for every small $\vep$ and every 
large $\kappa$.
\end{lemma}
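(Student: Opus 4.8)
The plan is to first establish that $\cL_\vep$ is a well-defined bounded operator on $C^{r}(M,\mathbb R)$, and then to deduce the cone invariance by combining the \emph{uniform} cone contraction of Theorem~\ref{t.cone.invarianceCr} with the convexity of the cones $\Lambda^{r}_{\kappa}$.

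For the well-definedness, fix $\vr\in C^{r}(M,\mathbb R)$. Every $(\tilde f,\tilde\phi)\in\supp\theta_\vep$ lies in $\cF^r\times\cW^r$ and hence satisfies (H1), (H2) and (P') with uniform constants, so there is a uniform bound $\|\cL_{\tilde f,\tilde\phi}\vr\|_r\le C\|\vr\|_r$ on $\supp\theta_\vep$, where $C$ depends only on $L,\si,q,\deg(f),\vep_\phi,\vep'_\phi,r$ (through the chain rule, the $s$-th derivative of $\cL_{\tilde f,\tilde\phi}\vr$ is controlled by the --- uniformly bounded on a small $V_\vep(f,\phi)$ --- derivatives of the inverse branches of $\tilde f$, by $\max_{s\le r}\|D^s\tilde\phi\|_0<\vep'_\phi$, and by $\|\vr\|_r$). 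Combining this uniform bound with Proposition~\ref{prox}, which gives that $(\tilde f,\tilde\phi)\mapsto\cL_{\tilde f,\tilde\phi}\vr$ is continuous, hence measurable, into $C^{r}(M,\mathbb R)$, the integral in \eqref{eq.stoRPF}, taken pointwise in $x$, defines a function $\cL_\vep\vr$ that lies in $C^{r}(M,\mathbb R)$, satisfies $\|\cL_\vep\vr\|_r\le C\|\vr\|_r$, and, since $D^s\colon C^r(M,\mathbb R)\to C^0(M,\mathbb R)$ is a bounded linear map for $s\le r$, may be differentiated under the integral sign:
\begin{equation*}
D^s(\cL_\vep\vr)=\int D^s(\cL_{\tilde f,\tilde\phi}\vr)\;d\theta_\vep(\tilde f,\tilde\phi),\qquad 0\le s\le r.
\end{equation*}

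For the cone invariance, the key observation is that each $\Lambda^{r}_{\kappa}$ is convex: if $\psi_1,\psi_2\in\Lambda^{r}_{\kappa}$ then $\|D^s(\psi_1+\psi_2)\|_0\le\|D^s\psi_1\|_0+\|D^s\psi_2\|_0\le\frac{\kappa}{c^{(r)}_s}(\inf\psi_1+\inf\psi_2)\le\frac{\kappa}{c^{(r)}_s}\inf(\psi_1+\psi_2)$, and the integrated version of exactly this computation shows that $\int\psi_\omega\,d\theta\in\Lambda^{r}_{\kappa}$ whenever $\theta$ is a probability measure and $\psi_\omega\in\Lambda^{r}_{\kappa}$ for $\theta$-a.e.\ $\omega$, using $\bigl\|D^s\!\int\psi_\omega\,d\theta\bigr\|_0\le\int\|D^s\psi_\omega\|_0\,d\theta$ together with $\inf\int\psi_\omega\,d\theta\ge\int\inf\psi_\omega\,d\theta$; the same argument works verbatim for $\Lambda^{r}_{\hat\la\kappa}$. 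Now let $\kappa_0>0$ and $0<\hat\la<1$ be as in Theorem~\ref{t.cone.invarianceCr}; by the uniformity of its constants, the conclusion $\cL_{\tilde f,\tilde\phi}(\Lambda^{r}_{\kappa})\subset\Lambda^{r}_{\hat\la\kappa}$ holds with the \emph{same} $\hat\la$ for every $(\tilde f,\tilde\phi)\in\cF^r\times\cW^r$ and every $\kappa\ge\kappa_0$. Taking $\vep$ small enough that the previous step applies on $V_\vep(f,\phi)$, and $\kappa\ge\kappa_0$, for $\vr\in\Lambda^{r}_\kappa$ the function $\cL_\vep\vr$ is an average over $\theta_\vep$ of elements of $\Lambda^{r}_{\hat\la\kappa}$, hence lies in $\Lambda^{r}_{\hat\la\kappa}$ by the integrated convexity just noted; positivity of the denominator occurring in the definition of the cone is guaranteed by $\inf\cL_\vep\vr\ge\deg(f)\,e^{\inf\tilde\phi}\inf\vr>0$, with a uniform lower bound for $\inf\tilde\phi$ on $V_\vep(f,\phi)$. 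This gives $\cL_\vep(\Lambda^{r}_\kappa)\subset\Lambda^{r}_{\hat\la\kappa}$.

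The only genuinely delicate point is the measurability and integrability needed in the well-definedness step, since the parameter space $\cF^r\times\cW^r$ is infinite-dimensional; this is precisely where Proposition~\ref{prox} (continuity of $(f,\phi)\mapsto\cL_{f,\phi}$ in the strong operator topology) together with the uniform $C^r$-bound does the work. Once this is in place, the cone invariance is a soft consequence of the convexity of $\Lambda^{r}_\kappa$ and of the uniformity already built into Theorem~\ref{t.cone.invarianceCr}, requiring no new estimates.
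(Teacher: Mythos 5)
Your proof is correct and follows essentially the same route as the paper: well-definedness via the uniform $C^r$ bounds together with dominated convergence, and cone invariance via the uniform contraction of $\Lambda^r_\kappa$ from Theorem~\ref{t.cone.invarianceCr} on $V_\vep(f,\phi)$. You simply make explicit the integrated-convexity argument (using $\inf\int\psi_\omega\,d\theta\ge\int\inf\psi_\omega\,d\theta$ and $\|D^s\int\psi_\omega\,d\theta\|_0\le\int\|D^s\psi_\omega\|_0\,d\theta$) and the measurability step that the paper compresses into the word ``consequently,'' which is a useful clarification but not a different method.
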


\begin{proof}
First we prove that the stochastic transfer operator $\cL_\vep$ is well defined. Given any fixed 
$\varphi\in C^r(M,\mathbb R)$ it follows that $\cL_{\tilde f,\tilde \phi} (\varphi)$ is $C^r$ for all
$(\tilde f,\tilde \phi)\in \cF^r\times \cW^r$. Moreover, since the constants are taken uniform in
the family $\cF^r$ and $\cW^r$ then it is a consequence of Lebesgue dominated convergence theorem
that $\cL_\vep(\varphi)$ is also $C^r$. This proves the first claim in the lemma.

On the other hand, by construction we obtain $0<\hat\la<1$ and $\kappa$ large so that 
$\cL_{\tilde f,\tilde \phi}(\La^r_\kappa)\subset \La^r_{\hat \la \kappa}$ for every $(\tilde f,\tilde\phi)$ in 
a neighborhood of $(f,\phi)$. In particular, if $\vep$ is small then this property holds in $V_\vep(f,\phi)$ and, consequently, $\cL_{\vep}(\La^r_\kappa) \subset \La^r_{\hat \la \kappa}$. This proves the second statement 
finishes the proof of the lemma.
\end{proof}

We finish our section by proving our spectral stochastic stability result. 

\begin{proof}[Proof of Theorem~\ref{thm.spectral.stability}]
Let $(f,\phi)\in \cF^r\times \cW^r$ be fixed. By Proposition~\ref{prox} the transfer operators $\cL_{\tilde f,\tilde \phi}$ acting on the space $C^r(M, \re)$ vary continuously with $(\tilde f,\tilde \phi)\in \cF^r\times \cW^r$ in the strong 
operator topology.

Recall also that the dominant eigenvalue for $\cL_{f,\phi}$ equals to the spectral radius and has multiplicity one
and that both the leading eigenvalue and corresponding eigenspace vary continuously.  Moreover, since 
all transfer operators $\cL_{\tilde f,\tilde \phi}$ preserve the same cone $\La^r_\kappa$ for all 
$(\tilde f,\tilde \phi)$ in a small neighborhood of $(f,\phi)$ then it follows from the last proposition that
the same property holds for $\cL_\vep$ with $\vep$ small.  
Proceeding as in the later sections we get that $\cL_\vep$ has a spectral gap for every small $\vep$.
In particular, there exists a unique eigenvalue $\la_\vep$, which coincides with the spectral radius of $\cL_\vep$,
and the eigenspace associated to $\la_\vep$ is one-dimensional.

We claim that the spectral radius $\la_\vep$ of $\cL_\vep$ varies continuously for all small $\vep$ and that
it converges to $\la_{f,\phi}$ whenever $\vep$ tends to zero. If $\vep>0$ is small we have that
all operators $\la_\vep^{-1} \cL_\vep$ preserve the same cone of functions $\La^r_\kappa$. Moreover,
there exists a conformal measure $\nu_\vep$, that is, such that $\cL_\vep^*\nu_\vep=\la_\vep \nu_\vep$ and
it follows from the normalization $\int \la_\vep^{-1}\cL_\vep 1 \;d\nu_\vep=1$ that the convergence
$\lim_{n} \frac1n \log \|\la_\vep^{-n}\cL^n_\vep 1\|_r=0$
is uniform for all small $\vep$. Proceeding as in the proof of Proposition~\ref{propre} we deduce that the 
functions $\vep\to \la_\vep$ and $\vep\to d\mu_\vep/d\nu_\vep$ vary continuously for all small $\vep$. In fact, 
proceeding as before one obtains that
\begin{align*}
\left\|\frac{d\mu_{\vep}}{d\nu_{\vep}} -  \frac{d\mu_{f,\phi}}{d\nu_{f,\phi}} \right\|_{r} 
	& \leq 6 R_1 \Delta \tau^n 
	+ \left\|\la_\vep^{-n}\cL_{\vep}^{n}1- \la^{-n}_{f,\phi}\cL_{f,\phi}^{n}1  \right\|_{r}
\end{align*}
where $R_1$ is a uniform upper bound for the $C^0$-norm of the iterates $\la_\vep^{-n}\cL^n_\vep 1$
for all small $\vep$, the constant $\De$ is the diameter of the cone $\La_{\kappa}^r$ and $0<\tau<1$. 
Using that for $n$ fixed the functions $\cL^n_\vep 1$ and $\cL^n_{f,\phi} 1$ are uniformly close provided that
$\vep$ is small then one deduces that $\la_\vep \to \la_{f,\phi}$ and. consequently, that
the density $d\mu_{\vep}/d\nu_{\vep}$ converges to $d\mu_{f,\phi}/d\nu_{f,\phi}$ as $\vep\to0$.
This concludes the proof of our theorem.
\end{proof}

%%%%%%%%%%%%%%%%%%%%%%%%%%%%%%%%%%%
\section{Examples}\label{s.examples}

In this section we provide some examples and commment on our assumptions.

\begin{example}\label{ex.saddle}
Let $f_0:\torus^d\to\torus^d$ be a linear expanding map. Fix some
covering $\cU$ by domains of injectivity for $f_0$ and some $U_0\in\cU$ containing a fixed (or
periodic) point $p$. Then deform $f_0$ on a small neighborhood of
$p$ inside $U_0$ by a pitchfork bifurcation in such a way that $p$
becomes a saddle for the perturbed local diffeomorphism $f$. In particular,
such perturbation can be done in the $C^r$-topology, for every $r>0$. 
By construction, $f$ coincides with $f_0$ in the complement of $P_1$,
where uniform expansion holds. Observe that we may take the
deformation in such a way that $f$ is never too contracting in
$P_1$, which guarantees that conditions (H1) and (H2) hold, and that $f$ is still
topologically exact. Condition (P') is clearly satisfied by
any $C^r$-potential close to $\phi\equiv 0$. Hence, there exists a unique measure of 
maximal entropy $\mu$ for $f$, it is absolutely continuous with respect to a conformal measure $\nu$,
 supported in the whole manifold $\torus^d$ and  has exponential decay of correlations on the space $C^r$ observables. 
Moreover, it follows from our results that the density $d\mu/d\nu$ is $C^r$ 
and it varies continuously in the $C^{[r]}$-topology with the dynamical system $f$, 
where $[r]$ denotes the integer part of $r$.
Furthermore, the topological pressure function $\Ptop(f,\phi)$ varies continuously among the pairs
$(f,\phi)$ that satisfy conditions (H1), (H2) and (P') with uniform constants.
Finally, in the case that $r\ge 1$ we have that the maximal entropy measure is strong stable 
under deterministic perturbations and satisfies a random spectral stability.
\end{example}

In fact, the previous example can be modified to deal with expanding maps with indifferent 
periodic points in a higher-dimensional setting. A particularly interesting one-dimenional example is 
given by the Maneville-Pomeau transformation and the family of potentials $\varphi_t=-t\log|Df|$. 
An intermitency phenomenon occurs at $t=1$ but no longer occurs whenever $t$ is close to zero as we 
now discuss with detail.

\begin{example}\emph{(Manneville-Pomeau map)}
If $\al \in (0,1)$, let $f_\al:[0,1]\to [0,1]$ be the $C^{1+\al}$-local
diffeomorphism given by
\begin{equation*}\label{eq. Manneville-Pomeau}
f_\al(x)= \left\{
\begin{array}{cl}
x(1+2^{\alpha} x^{\alpha}) & \mbox{if}\; 0 \leq x \leq \frac{1}{2}  \\
2x-1 & \mbox{if}\; \frac{1}{2} < x \leq 1.
\end{array}
\right.
\end{equation*}
Observe that conditions (H1) and (H2) are verified and the family $\varphi_{\al,t}=-t\log|Df_\al|$ of
$C^\alpha$-potentials do satisfy condition (P) for all $|t|\le t_0$ small and $\al\in(0,1)$ since
\begin{equation*}
|\varphi_{\al,t}(x)-\varphi_{\al,t}(y)|
	= |t\log|Df_\al(x)|-t\log|Df_\al(y)||
	=|t| \log \frac{|Df_\al(x)|}{|Df_\al(y)|}
	\leq |t| \log ( 2+\al )
\end{equation*}
Hence, we obtain that for all $|t|\le t_0$ there exists a unique equilibrium state $\mu_t$, it is absolutely continuous
with respect to a conformal measure $\nu_t$ and has exponential decay of correlations in the space of H\"older 
observables. Moreover, $d\mu_t/d\nu_t$ is H\"older continuous and it varies continuously in the $C^0$-norm for 
all $|t|\le t_0$.

Moreover no transition occurs once one considers the order of contact $\al$ of the indifferent fixed point to
increase. Indeed, if $\al$ is arbitrary large then it follows from our previous reasoning that there exists a small 
interval $I_\al=[-t_\al,t_\al]$ containing zero such that the topological pressure $\mathbb R^+\times [-t_\al,t_\al] \ni (\al,t) \mapsto \Ptop(f_\al,\varphi_{\al,t})$ varies continuously. Moreover, there is a unique equilibrium state for
the $C^{\al}$-potential $\varphi_{\al,t}$ with $|t|\le t_\al$ and it is $C^{[\al]}$-strong stable under deterministic
perturbations: for every $(\al,t)\in\mathbb R^+\times[-t_\al,t_\al]$ there exists a unique equilibrium state $\mu_{\al,t}$ absolutely continuous with respect to a conformal measure $\nu_{\al,t}$, its density $d\mu_{\al,t}/d\nu_{\al,t}$ is $C^{[\al]}$ and varies continuously with $(\al,t)$. 
Finally, since our strong random spectral stability result applies for general random perturbations one can
consider e.g. $\theta_\vep$ to be the uniform distribution in the one-parameter family of pairs $\{(f_\al,\varphi_{\al,t_\al}): \al\in (\al_0-\vep,\al_0+\vep)\} \subset \cF\times \cW$. In particular, the random dynamical system associated
considers random orbits using maps with indifferent fixed points with different contact orders. 
Here our results yield that the random Ruelle-Perron-Frobenius operator $\cL_\vep$ has the spectral gap property
and that its spectral radius $\la_\vep$ converges to 
$\exp(\Ptop(f_\al,\varphi_{\al,t_\al}))\mid_{\al=\al_0}$ as $\vep$ tends to zero.
\end{example}

\bigskip

\textbf{Acknowledgements.} The authors are grateful to A. Arbieto, T. Bomfim, C. Matheus, K. Oliveira, V. Pinheiro 
and M. Viana for very fruitful conversations on thermodynamical formalism and to the anonimous 
referees for the careful reading of the manuscript and suggestions. This work was partially supported by CNPq-Brazil and FAPESB.

%%%%%%%%%%%%%%%%%%%%%%%%%%%%%%%%%%%%
\bibliographystyle{alpha}

\end{document}